\newtheorem{theorem}{Theorem}[section]
\newtheorem{lemma}[theorem]{Lemma}
\newtheorem{proposition}[theorem]{Proposition}
\theoremstyle{definition}
\newtheorem{definition}{Definition}[section]
\theoremstyle{remark}
\newtheorem{remark}{Remark}[section]
\def\R{{\mathbb R}}
\def\N{{\mathbb N}}
\def\H{{\mathcal H}}
\def\supp{\mathop{\rm supp}\nolimits}
\def\M{{\mathcal M}}
\newcommand{\dist}[2]{\Bigl\langle #1, #2 \Bigr\rangle}
\numberwithin{equation}{section}
\newcommand{\tendsto}[1]{\renewcommand{\arraystretch}{0.5}
\begin{array}[t]{c}
\longrightarrow \\
{ \scriptstyle #1 }
\end{array}
\renewcommand{\arraystretch}{1}}
\newcommand{\weaktendsto}[1]{\renewcommand{\arraystretch}{0.5}
\begin{array}[t]{c}
\rightharpoonup \\
{ \scriptstyle #1 }
\end{array}
\renewcommand{\arraystretch}{1}}
\begin{document}
\title[Asymptotic stability of the Degasperis-Procesi peakons ]{A rigidity result for the Holm-Staley $ b$-family of  equations with application to the asymptotic stability of the Degasperis-Procesi peakon }

\subjclass[2010]{35Q35,35Q51, 35B40} 
\keywords{ Degasperis-Procesi equation, Holm-Staley $ b$-family of equations, asymptotic stability, peakon,}

\author[L. Molinet]{Luc Molinet}


\address{Luc Molinet, Institut Denis Poisson, Universit\'e de Tours, Universit\'e d'Orl\'eans, CNRS,  Parc Grandmont, 37200 Tours, France.}
\email{Luc.Molinet@univ-tours.fr}

\date{\today}
\begin{abstract}
We prove  that the peakons are  asymptotically  $H^1$-stable, under the flow of the Degasperis-Procesi equation, in the class of  functions with a momentum density that belongs to $ {\mathcal M}_+(\R) $.
The key argument is a  rigidity result for uniformly in time exponentially decaying global solutions that is shared by  the 
Holm-Staley $ b$-family of equations for $b\ge 1$.
This extends previous results obtained for the Camassa-Holm equation ($b=2$). 
 \end{abstract}

\maketitle

\section{Introduction}
\noindent The  Degasperis-Procesi equation (D-P) reads 
\begin{equation}
u_t -u_{txx}=- 4 u u_x +3u_x u_{xx} + u u_{xxx},\quad 
(t,x)\in\R^2, \label{DP1}\\
\end{equation}
It is a particular case of the more general family of equations called $ b$-family equation that reads
\begin{equation}
u_t -u_{txx}=- (b+1) u u_x +bu_x u_{xx} + u u_{xxx}, 
(t,x)\in\R^2, \label{bfamily1}\\
\end{equation}
with $ b\in \R $ . This family of equations was introduced by Holm and Staley (\cite{HS1},\cite{HS2}) to  study the exchange of stability in the dynamics of solitary wave solutions under changes in the nonlinear balance in a  one dimensional shallow water waves equation. It reduces for $ b=3 $ to the DP equation and for $ b=2 $ to the famous
 Camassa-Holm equation (C-H) 
 \begin{equation}
u_t -u_{txx}=- 3 u u_x + 2 u_x u_{xx} + u u_{xxx}, 
(t,x)\in\R^2\, . \label{Camassa1}\\
\end{equation}
Even if, as noticed by Holm and Staley,   the behavior of the solutions  do change at $ b=0,\mp 1, \mp 2, \mp 3 $, the $b$-family equations share the same peaked solitary waves given by 
$$ u(t,x)=\varphi_c(x-ct)=c\varphi(x-ct)=ce^{-|x-ct|},\; c\in\R.$$
They are called peakon whenever $ c>0 $ and antipeakon  whenever
$c<0$.  
Note that the initial value problem associated with \eqref{bfamily1} has to be rewriten as
\begin{equation}
\left\{ \begin{array}{l}
u_t +u u_x +(1-\partial_x^2)^{-1}\partial_x (\frac{b}{2}u^2+\frac{3-b}{2}u_x^2)=0\\
\label{bfamily} \; 
u(0)=u_0, 
\end{array}
\right.
\end{equation}
to give a meaning to these solutions. 
It is also worth noticing that the $b$-family equation \eqref{bfamily1} can be rewritted as
\begin{equation}\label{by}
y_t +u y_x+b u_x y =0 
\end{equation}
 which is a transport equation for the momentum density  $y=u-u_{xx} $.
  As a consequence an initial data with a signed initial momentum density gives rise to a solution that keeps this property.
  This is one of the main point to prove that  smooth initial data with integrable  signed initial momentum density give rise to global solutions (see \cite{GLT}).

Both the Camassa and the Degasperis-Procesi equation 
can be derived as a model for the propagation of unidirectional
shallow water waves over a flat bottom  (\cite{CH1}, \cite{Johnson}), \cite{AL} and  \cite{CL}.    
They are also known to be  completely integrable (see \cite{CH1},\cite{CH2}, \cite{DP1}, \cite{DP2}) and to be bi-Hamiltonian.  
They both can be written in Hamiltonian form as 
\begin{equation}
\partial_t E'(u) =-\partial_x F'(u) \quad .
\end{equation}
For the Camassa-Holm equation it holds 
\begin{equation} 
\; E(u)=\int_{\R} u^2+u^2_x
\,  \mbox{ and } F(u)=\int_{\R} u^3+u u^2_x \,\;
\label{Ech}
\end{equation}
whereas for the Degasperis-Procesi equation it holds 
\begin{equation}\label{Edp}
E(u)=\H(u)=\int_{\mathbb{R}}yv=\int_{\R} 5v^2 +4 v_x^2 +v_{xx}^2 ~~\text{and}~~F(u)=\int_{\mathbb{R}}u^{3}
\end{equation}
with $v=(4-\partial^{2}_{x})^{-1}u $.

It is worth noticing that they are the only elements of the $ b$-family that  enjoy such Hamiltonian structure 
   and thus for which results on the orbital stability of the peakon do exist\footnote{However, numerical simulations (\cite{HS1})  seems to indicate that the peakons are stable under the flow of \eqref{bfamily} as soon as $ b>1 $}. In a pioneer work  \cite{CS1}, Constantin and Strauss  proved the orbital stability in $ H^1(\R) $ of the peakon for the Camassa-Holm equation. This approach has been then adapted in \cite{LL} for the Degasperis-Procesi equation but only in the case of a non negative density momentum. Note also that a great simplification of the proof has been done in \cite{Andre2}. However, as far as the author knows, there is no available orbital stability  result for the peakon of the  DP equation with respect to solutions with 
non signed  momentum density. 
 
 In \cite{L}, the author proved that  the peakons are asymptotically stable under the CH-flow
  in the class of solutions emanating from initial data that belong to $ H^1(\R) $ with a density momentum that is a non negative finite measure. The main point was the proof of a rigidity result for uniformly almost localized solutions of the CH equation in this class. The asymptotic stability result being then obtained by following the approach developed by Martel and Merle (\cite{MM1}, \cite{MM2}). 
  
In this paper we first prove that this rigidity property can be extended to the $ b$-family equations whenever $ b\ge 1 $. This emphasizes that this property is not directly linked to the integrability structure of the equation since it was proven in \cite{DP1} that \eqref{bfamily} is not integrable for $ b\not\in\{2,3\} $.
In a second part, we establish the asymptotic stability of the peakons under the DP-flow in  the class of solutions emanating from initial data that belong to $ H^1(\R) $ with a density momentum that is a non negative finite measure.

Before stating our results let us introduce the function space where our initial data will take place. Following \cite{CM}, we introduce the following space of functions
\begin{equation}
Y=\{ u\in H^1(\R)    \mbox{ such that  }  u-u_{xx} \in {\mathcal M}(\R)  \} \; .
\end{equation}
We denote by $ Y_+ $ the closed subset of $ Y $ defined by  $Y_+=\{u\in Y \, /\, u-u_{xx}\in \M_+ \} $  where $ {\mathcal M}_+ $ is the set of non negative finite Radon measures on $ \R$.
Note that since we are not aware of available uniqueness, global existence and continuity with respect to initial data results of all the $ b$-family equations for initial data in this class, we establish such results in Section \ref{section2}.

Let $ C_b(\R)$  be the set of bounded continuous functions on $ \R $, $ C_0(\R)$  be  the set of continuous functions  on $ \R $ that tends to $ 0 $ at infinity and let $ I \subset \R$ be an interval.  
 A sequence $\{\nu_n\}\subset {\mathcal M} $ is said to converge tightly (resp. weakly) towards $ \nu\in {\mathcal M} $ if for any $ \phi\in C_b(\R) $ (resp. $C_0(\R)$), $ \langle \nu_n,\phi\rangle \to
  \langle \nu,\phi\rangle $. We will then write $ \nu_n  \rightharpoonup \! \ast \; \nu $ tightly  in $ \M $ (resp. $ \nu_n  \rightharpoonup \! \ast \; \nu $ in $\M$).
 
 Throughout this paper, $ y\in C_{ti}(I;\M) $ (resp.   $ y\in C_{w}(I;\M) $) will signify that for any $ \phi\in C_b(\R) $
  (resp. $\phi\in C_0(\R)$) , 
 $ t\mapsto \dist{y(t)}{\phi} $ is continuous on $ I$ and $ y_n \rightharpoonup \! \ast \; y $ in $ C_{ti}(I;\M) $ (resp. $ y_n \rightharpoonup \! \ast \; y $ in $ C_{w}(I;\M) $) will signify that for any $ \phi\in C_b(\R) $ (resp. $C_0(\R)$),
 $ \dist{y_n(\cdot)}{\phi}\to \dist{y(\cdot)}{\phi} $ in $ C(I)$. 
\begin{definition}\label{defYlocalized}
 We say that  a solution $ u \in C(\R; H^1(\R)) $ with $ u-u_{xx}\in C_{w}(\R; \M_+) $ of \eqref{bfamily} is $ Y$-almost localized if there exist $ c>0 $ and a $ C^1 $-function $ x(\cdot) $, with $ x_t\ge c>0 $,  for which for any $ \varepsilon>0 $, there exists $ R_{\varepsilon},>0 $ such that for all $ t\in\R $ and all $ \Phi\in C(\R) $ with $0\le \Phi\le 1 $ and $ \supp \Phi \subset [-R_\varepsilon,R_\varepsilon]^c $.
  \begin{equation}\label{defloc}
  \quad \Bigl\langle  \Phi(\cdot-x(t)), u(t)-u_{xx}(t)\Bigr\rangle \le \varepsilon \; .
\end{equation}
We will say that such solution is uniformly in time exponentially decaying (up to translation)  if there exist 
$ a_1, a_2>0 $ such that for all $(t ,x)\in \R ^2$ 
\begin{equation}\label{exp}
|u(t,x)| \le a_1  e^{-a_2 |x-x(t)|} 
\end{equation}
 \end{definition}
\begin{remark}\label{remark1}
In \cite{L} we replaced  \eqref{defloc} by 
\begin{equation}\label{defancien}
 \int_{\R} (u^2(t)+u_x^2(t))  \Phi(\cdot-x(t)) \, dx + \Bigl\langle  \Phi(\cdot-x(t)), u(t)-u_{xx}(t)\Bigr\rangle \le \varepsilon \; .
\end{equation}
 in the definition of $ Y $-almost localized solutions.
This characterization was natural in the context fo the Camassa-Holm equation since the $ H^1 $-norm is a conservation law. For the $ b$-family this is not the case and this is  why it seems more appropriate to give the characterization \eqref{defloc} that is however equivalent to \eqref{defancien} (see the beginning of Section 
\ref{section2} for the proof of this equivalence).
\end{remark}
\begin{theorem}\label{rigidity}
 Let $ b\ge 1 $ and  $ u \in C(\R; H^1(\R)) $,  with $ u-u_{xx}\in C_{w}(\R; \M_+) $, be a $ Y$-almost localized solution of 
  the $ b$-family equation \eqref{bfamily}  that is not identically vanishing. 
  Assume moreover that 
  $ u $ is uniformly in time exponentially decaying  in the case $ b\not\in\{1,2,3\}$. 
   Then there exists $ c^*	>0 $ and $ x_0\in \R $ such that  $$
 u(t)=c^* \, \varphi(\cdot -x_0-c^* t) , \quad \forall t\in \R \, .
 $$
\end{theorem}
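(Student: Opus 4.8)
The plan is to adapt the rigidity (Liouville-type) scheme of Martel--Merle, in the form carried out for the Camassa--Holm equation in \cite{L}, to the whole range $b\ge 1$. After translating so that $x(0)=0$, I would work throughout with the momentum density $y=u-u_{xx}\in\M_+$, which by hypothesis stays a nonnegative measure and evolves by the transport equation \eqref{by}, together with the reconstruction $u=\frac12 e^{-|\cdot|}\ast y$. Two elementary but decisive facts I would record first: that $\int_\R y(t)$ is conserved for every $b$ (the a priori obstruction $(1-b)\int u_x y$ vanishes identically, since $\int u_x y=\frac12\int(u^2-u_x^2)_x=0$), and that $y\ge 0$ forces the pointwise bound $|u_x|\le u$. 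The ultimate goal is to show that $y(t)$ is, for each $t$, a single Dirac mass; here the peakon is characterized via the sharp inequality $\max_x u(t,x)\le \frac12\int_\R y(t)$, with equality if and only if $y(t)$ is one Dirac (because $e^{-|x-x'|}\le 1$, with saturation only when the mass concentrates at a point). Since $y\ge 0$ gives $u\ge0$ and $x_t\ge c>0$, the resulting speed satisfies $c^\ast>0$ once the solution is nontrivial.

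The analytic engine is an almost-monotonicity estimate for the portion of the momentum lying ahead of the moving frame. For a smooth weight $\psi$ increasing from $0$ to $1$ and a parameter $x_0>0$, set $\mathcal I(t)=\Bigl\langle y(t),\ \psi(\cdot-x(t)-x_0)\Bigr\rangle$. Differentiating and using \eqref{by} in the form $y_t+(uy)_x+(b-1)u_xy=0$ gives
\begin{equation}
\frac{d}{dt}\mathcal I(t)=\Bigl\langle (u-x_t)\,y,\ \psi'(\cdot-x(t)-x_0)\Bigr\rangle-(b-1)\Bigl\langle u_x\,y,\ \psi(\cdot-x(t)-x_0)\Bigr\rangle .
\end{equation}
On the support of $\psi'$, which sits far ahead of the frame, the exponential decay \eqref{exp} makes $u$ small while $x_t\ge c>0$, so $u-x_t<0$ there and the first bracket is nonpositive. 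The second, ``stretching,'' bracket is an error term whose magnitude is exponentially small in $x_0$ (because $|u_x|\le u$ and $u$ is exponentially small on $\supp\psi$); the factor $b-1\ge 0$ keeps it bounded and of controlled sign. This is precisely where the hypotheses are used: $b\ge1$ governs the transport/stretching term, and the uniform exponential decay makes the error integrable in the shift $x_0$. For $b\in\{1,2,3\}$ the special structure replaces \eqref{exp}: for $b=1$ the stretching term vanishes outright since \eqref{by} is a pure conservation law for $y$, while for $b=2,3$ the conserved $H^1$-energy, respectively the Hamiltonian \eqref{Edp}, yields the decay a posteriori.

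With almost-monotonicity in hand I would combine it with the uniform almost-localization \eqref{defloc} to rule out any transfer of momentum across the frame: the one-sided momenta have limits as $t\to\pm\infty$ which, together with conservation of $\int y$ and tightness of $\{y(t)\}$ in $\M_+$, force no mass to leak out of a bounded window. Extracting a weak-$\ast$ limit of the translates $y(t_n,\cdot+x(t_n))$ produces an asymptotic object that is itself a $Y$-almost-localized solution, but now \emph{exactly} localized, i.e.\ with momentum trapped in a fixed-width moving window for all time and shedding nothing. Running the one-sided estimate simultaneously from the left and from the right squeezes this window, and the resulting no-radiation property lets one check that the asymptotic profile saturates $\max u=\frac12\int y$; hence its momentum is a single Dirac and the profile is a peakon $c^\ast\varphi$. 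A uniqueness argument (the original solution is determined by, and coincides with, its asymptotic object, using the well-posedness theory of Section \ref{section2}) then upgrades this to $u(t)=c^\ast\varphi(\cdot-x_0-c^\ast t)$ for all $t$, with $x_t\equiv c^\ast$ following from the peakon dynamics.

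I expect the main obstacle to be this final identification/squeezing step for general $b$. For $b=2$ one has the variational characterization of the peakon à la Constantin--Strauss arising from two conservation laws, but for $b\notin\{1,2,3\}$ there is neither integrability nor a second coercive conserved quantity, so the collapse of the support to a point must be extracted from the transport structure and the sign $y\ge 0$ alone, with the exponential-decay hypothesis compensating for the missing functional; isolating exactly where $b\ge1$ (rather than the exceptional structure at $b=1,2,3$) becomes indispensable is the delicate part. A secondary difficulty, again handled by Section \ref{section2}, is to justify all the above computations in the purely measure-valued, low-regularity setting $u\in C(\R;H^1)$, $y\in C_w(\R;\M_+)$, where $u$ is only Lipschitz and $y$ carries no sign-definite smoothness, so that each integration by parts and each differentiation of $\mathcal I$ must be made rigorous by approximation.
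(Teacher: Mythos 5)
Your formal computation of $\frac{d}{dt}\mathcal{I}$ is correct (as are the preliminary facts: conservation of $\int_\R y$, the bound $|u_x|\le u$, and the characterization of the peakon by equality in $\max u\le\frac12\int_\R y$), but the almost-monotonicity engine as you have set it up cannot close. Your weight $\psi(\cdot-x(t)-x_0)$ travels at exactly the frame speed, so the stretching error $-(b-1)\langle u_x y,\psi\rangle$ is only bounded by $Ce^{-a_2x_0}$ \emph{at each fixed time} (and it has no sign, since $u_x$ changes sign, so ``of controlled sign'' is wrong); integrating your differential inequality over $[t,t_0]$ produces an accumulated error $O\bigl(|t_0-t|\,e^{-a_2x_0}\bigr)$, which blows up as $t\to-\infty$ --- and $t\to-\infty$ is exactly the regime you need in order to transport the almost-localization from infinite negative times to time $t_0$. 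The paper avoids this in two distinct ways, and you need one of them: Proposition \ref{pro3} works in Lagrangian variables, using $y(0,x)=y(t,q(t,x))\,q_x(t,x)^b$ together with the fact that characteristics issued from ahead of $x(0)+r_0$ recede from the frame at relative speed $\ge c_0/2$ backwards in time, so that by the uniform exponential decay $\int_{-\infty}^0|u_x(s,q(s,x))|\,ds<\infty$ and the Jacobian $q_x$ is bounded above and below \emph{uniformly in time}; alternatively (Lemma \ref{almostdecay}) one takes weights moving at speed $\dot z\le(1-\beta)\dot x$, so the weight-to-frame distance grows linearly and the error $e^{-\beta c_0(t_0-t)/6}$ is time-integrable. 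Note moreover that these give different conclusions: receding weights only give exponential smallness of the momentum ahead of the frame, whereas the Lagrangian argument gives the \emph{exact} vanishing $\supp y(t,\cdot+x(t))\subset\,]-\infty,r_0]$, which is what the sequel of the proof requires.

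The second, larger gap is the collapse step, which you assert rather than prove (``the resulting no-radiation property lets one check that the asymptotic profile saturates $\max u=\frac12\int y$'') and indeed flag yourself as unresolved; this is the actual core of the paper's proof, and it is not a soft compactness statement. Once the support is bounded above, the paper studies the right endpoint $q^*(t)=x(t)+x_+(t)$ of $\supp y$, which is an integral curve of $u$ (Lemma \ref{lemmaq}); proves that the atom $a(t)=u_x(t,q^*(t)-)-u_x(t,q^*(t)+)$ of $y(t)$ there stays $\ge\alpha_0/8>0$ (Proposition \ref{propa} --- this is where $b\ge1$ is genuinely indispensable, through $\tilde q_x\ge1\Rightarrow\tilde q_x\le\tilde q_x^{\,b}$, used to dominate the Eulerian mass near the endpoint by the corresponding initial mass); shows $a$ is nondecreasing and differentiable with $a'=\frac12(u^2-u_x^2)(t,q^*(t)-)=\frac{a}{2}\bigl(2u(t,q^*(t))-a\bigr)\ge0$; and then, applying the whole machinery to $u(-t,-x)$ and comparing the limits of the jumps at the left and right endpoints (if they differed, the support width would grow linearly, contradicting its boundedness), concludes that $a$ is constant, whence $a'\equiv0$ and the saturation $u(t,q^*(t))=a/2$. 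Only then does the strict positivity of $(1-\partial_x^2)^{-1}\mu$ for $0\neq\mu\in\M_+$ force the remaining part of the measure to vanish and identify $y$ as a single Dirac. Your proposal contains none of this mechanism --- no endpoint, no jump function, no time-reversal comparison --- so even granting a repaired monotonicity it does not reach the conclusion; and your suggestion that $b\ge1$ enters through the sign of the stretching term misidentifies where the hypothesis actually matters.
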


As a consequence we get the asymptotic stablity of the peakons  to the  DP equation ($b=3$)  that reads in the weak
 form as 
 \begin{equation}\label{DP}
 u_t + u u_x + \frac{3}{2} \partial_x (1-\partial_x^2)^{-1} u^2 =0  \; .
 \end{equation}
\begin{theorem} \label{asympstab} 
Let $ c>0 $ be fixed. There exists a constant $0<\eta\ll 1  $ such that for any $ 0<\theta<1 $ and any $ u_0\in Y_+ $ satisfying 
\begin{equation}\label{difini}
\H(u_0-\varphi_c) \le \eta \, \theta^8 \; ,
\end{equation}
there exists $ c^*>0 $ with $ |c-c^*|\ll c $ and a $C^1$-function $ x \, : \, \R \to \R $ 
 with $ \displaystyle\lim_{t\to \infty} \dot{x}=c^* $  such that
\begin{equation}\label{cvfaible}
u(t,\cdot+x(t)) \weaktendsto{t\to +\infty} \varphi_{c^*} \mbox{ in } H^1(\R)  \; ,
\end{equation}
where $ u\in C(\R; H^1) $ is the solution of the DP equation ($b=3$) emanating from $ u_0 $.
Moreover, for any $z\in \R $, 
\begin{equation}\label{cvforte}
\lim_{t\to +\infty} \|u(t) -\varphi_{c^*}(\cdot-x(t))\|_{H^1(]-\infty,z[\cup ]\theta t ,+\infty[)}=0 \; .
\end{equation}
\end{theorem}
\begin{remark} Using that \eqref{DP} is invariant by the change of unknown $ u(t,x)\mapsto -u(t,-x) $, we obtain as well the asymptotic stability of the antipeakon profile $ c \varphi $ with $ c<0 $ in the class of $ H^1$-function with a momentum density that belongs to $ \M_-(\R) $.
\end{remark}
\begin{remark} We choose to write the convergence results \eqref{cvfaible}-\eqref{cvforte} in $H^1 (\R) $ by sake of simplicity. Actually these convergence results hold in any $ H^s(\R) $ for $ 0\le s<3/2 $. 
\end{remark}

This paper is organized as follows : the next section is devoted to the proof of  the well-posedness results of the $ b$-family equations in the class of solutions we will work with. 
In Section \ref{sect3}, we prove the rigidity result  for $ Y$-almost localized global solutions of the $ b $-family equations with $ b\ge 1 $ with the additional uniform exponential decay condition as soon as $ b>1$.  Finally, in Section \ref{sect4} we prove Theorem \ref{asympstab} as well as an  asymptotic stability  result for  train of peakons to the DP equation. Note that we postponed to the appendix  the proof of an almost monotonicity result for the DP equation that leads to the uniform exponential decay result for $ Y$-almost localized global solutions.

\section{Global well-posedness results} \label{section2}
We first recall some obvious estimates that will be useful in the sequel of this paper.  
Noticing that $ p(x)=\frac{1}{2}e^{-|x|} $ satisfies $p\ast y=(1-\partial_x^2)^{-1} y $ for any $y\in H^{-1}(\R) $ we easily get
$$
\|u\|_{W^{1,1}}=\|p\ast (u-u_{xx}) \|_{W^{1,1}}\lesssim \| u-u_{xx}\|_{\M}
$$
and 
$$
\|u_{xx}\|_{\M}\le \|u\|_{L^1}+ \|u-u_{xx} \|_{\M} 
$$
 which ensures  that 
\begin{equation} \label{bv}
Y\hookrightarrow  \{ u\in W^{1,1}(\R) \mbox{ with } u_x\in {\mathcal BV}(\R) \} \; .
\end{equation}
Moreover,  Young's convolution inequalities lead to 
\begin{equation}\label{ut}
\max(\|u\|_{L^2},\|u\|_{L^\infty}, \|u_x\|_{L^2} , \|u_x\|_{L^\infty})\le \| u-u_{xx}\|_{\M} \; .
\end{equation}
 It is also worth noticing that since for $ u\in C^\infty_0(\R) $, 
$$
u(x)=\frac{1}{2} \int_{-\infty}^x e^{x'-x} (u-u_{xx})(x') dx' +\frac{1}{2} \int_x^{+\infty} e^{x-x'} (u-u_{xx})(x') dx'
$$
and 
$$
u_x(x)=-\frac{1}{2}\int_{-\infty}^x e^{x'-x} (u-u_{xx})(x') dx' + \frac{1}{2}\int_x^{+\infty} e^{x-x'} (u-u_{xx})(x') dx' \; ,
$$
we get $ u_x^2 \le u^2 $ as soon as $ u-u_{xx} \ge 0 $ on $ \R $. By the density of $ C^\infty_0(\R) $ in $ Y $, we deduce that 
\begin{equation}\label{dodo}
 |u_x|\le u\mbox { for any } u\in Y_+ \; .
\end{equation}
In this paper we will often use that the $ Y$-almost localization of a global solution $ u $ leads to a $L^p $-almost localization of $u $ for $ p\in [1,+\infty] $ and even to a $ H^1$-almost localization.  Indeed, for $ u$  satisfying Definition \ref{defYlocalized}, taking $ \tilde{\Phi} \in C(\R) $ with 
$ 0\le \tilde{\Phi} \le 1 $,  $\tilde{\Phi}\equiv 0 $ on $[-R_{\varepsilon/8},R_{\varepsilon/8}] $
 and $\tilde{\Phi}\equiv 1 $ on $[-2R_{\varepsilon/8},2R_{\varepsilon/8}]^c $, we get  for $ p\in [1,+\infty] $ and any $ R>2R_{\varepsilon/8} $ that 
$$
\|u(t,\cdot+x(t))\|_{L^p(|x|>R)} \le \frac{1}{2} \| e^{-|\cdot|} \ast  (y(t,\cdot+x(t)) \tilde{\Phi})\|_{L^p(|x|>R)} 
+\frac{1}{2} \Bigl\| e^{-|\cdot|} \ast ( y(t,\cdot+x(t)) (1-\tilde{\Phi})) \Bigr\|_{L^p(|x|>R)} \; .
$$
with 
$$
 \| e^{-|\cdot|} \ast  (y(t,\cdot+x(t)) \tilde{\Phi})\|_{L^p(|x|>R)}\le \|e^{-|\cdot|} \|_{L^p} \|y(t,\cdot+x(t))  \tilde{\Phi}\|_{\M} \le  \varepsilon/4 \; 
 $$
 and for $ p\neq +\infty $, 
 \begin{align*}
  \Bigl\| e^{-|\cdot|} \ast \Bigl( y (t,\cdot+x(t))(1-\tilde{\Phi})\Bigr) \Bigr\|_{L^p(|x|>R)} & =
   \Bigl(  \int_{|x|>R} \Bigl| \langle e^{-|x-\cdot|} (1-\tilde{\Phi}), y(t,\cdot+x(t)) \rangle \Bigr|^p  \Bigr)^{1/p}\\
   & \le e^{2R_{\varepsilon/8}} M(u) \|e^{-|\cdot|} \|_{L^p(|x|>R)} \le 2 e^{-R+2R_{\varepsilon/8}} M(u)\; .
 \end{align*}
 with obvious modifications for $ p=+\infty $. Taking $ R>2R_{\varepsilon/8} $ such that $ 2e^{-R+2R_{\varepsilon/8}}M(u)<\varepsilon/2$, this ensures that 
 $$
 \|u(\cdot+x(t)\|_{L^p(|x|>R)} \le \varepsilon
 $$
 and applying this estimate for $ p=2$ with \eqref{dodo} 
  in hands, we infer that \eqref{defancien} holds for $ R_\varepsilon' > 2 R_{\varepsilon/8} $ with 
  $ e^{- R_\varepsilon'}M(u) < R_{\varepsilon/8}  \,  \varepsilon/8 $. 

Finally, throughout this paper, we will denote $ \{\rho_n\}_{n\ge 1} $ the mollifiers defined by 
\begin{equation} \label{rho}
\rho_n=\Bigl(\int_{\R} \rho(\xi) \, d\xi 
\Bigr)^{-1} n \rho(n\cdot ) \mbox{ with } \rho(x)=\left\{ 
 \begin{array}{lcl} e^{1/(x^2-1)} & \mbox{for} & |x|<1 \\
0 & \mbox{for} & |x|\ge 1 
\end{array}
\right.
\end{equation}
In \cite{GLT}  the global well-posedness result for smooth solutions with a non negative momentum density of the Camassa-Holm equation (see \cite{CE1})  is adapted to \eqref{bfamily}. This result can be summarized in the following proposition 
 \begin{proposition}{(Strong solutions \cite{GLT})}\label{smoothWP} \\
 Let $ u_0\in H^s(\R)$ with $ s>3/2 $. Then the initial value problem \eqref{bfamily} has a  unique solution $ u\in C([0,T]; H^s(\R)) 
 \cap C^1([0,T]; H^{s-1}) $ where $ T=T(\|u_0\|_{H^{\frac{3}{2}+}})>0 $ and, for any $ r>0 $,  the map $ u_0 \to u $ is continuous  from $ B(0,r)_{H^s} $ into $ C([0,T(r); H^s(\R)) $\\
 Moreover, let $ T^* >0 $ be the maximal time of existence of $ u $ in $ H^s(\R)$ then 
 \begin{equation}
 T^*<+\infty \quad \Leftrightarrow  \quad \liminf_{t\nearrow T} u_x=-\infty 
 \end{equation}
 and if $y_0= u_{0}-u_{0,xx}\ge 0 $ with $ y_0\in L^1(\R) $ then $ T^*=+\infty $ and 
 \begin{equation}\label{estimatey}
   \|y(t)\|_{L^1} = \|y(0)\|_{L^1} , \forall t\in \R_+ \; .
  \end{equation}
\end{proposition}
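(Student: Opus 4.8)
\emph{Proof proposal.} The plan is to treat the initial value problem as the quasilinear transport equation
$$
u_t + u u_x = -\partial_x(1-\partial_x^2)^{-1}\Bigl(\tfrac b2 u^2+\tfrac{3-b}2 u_x^2\Bigr)
$$
and to follow the Kato--Constantin--Escher scheme used for Camassa--Holm in \cite{CE1} and adapted in \cite{GLT}. I would first construct solutions by Friedrichs regularization, replacing $u$ by $\rho_n\ast u$ in the nonlinearity (with the mollifiers \eqref{rho}) so as to obtain a family of ODEs in $H^s$, and then derive uniform $H^s$ bounds. The key estimate is obtained by applying $\Lambda^s:=(1-\partial_x^2)^{s/2}$, pairing with $\Lambda^s u$ in $L^2$, and using a Kato--Ponce commutator estimate on the transport term together with the facts that $\partial_x(1-\partial_x^2)^{-1}$ gains one derivative and that $H^{s-1}$ is an algebra for $s>3/2$; this yields
$$
\frac{d}{dt}\|u\|_{H^s}^2 \lesssim \bigl(\|u\|_{L^\infty}+\|u_x\|_{L^\infty}\bigr)\|u\|_{H^s}^2 ,
$$
hence a lifespan $T=T(\|u_0\|_{H^s})>0$. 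Compactness gives a solution, uniqueness follows from an $L^2$ (or $H^{s-1}$) energy estimate on the difference of two solutions, and continuous dependence from a Bona--Smith argument. This establishes the first part of Proposition \ref{smoothWP}.

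For the blow-up alternative, the first point is that the low-order norm is controlled by $\|u_x\|_{L^\infty}$. A direct computation using $y=u-u_{xx}$ and integrations by parts gives
$$
\frac{d}{dt}\int_\R (u^2+u_x^2) = (2-b)\int_\R u_x^3 \le |2-b|\,\|u_x\|_{L^\infty}\|u\|_{H^1}^2 ,
$$
so that $\|u\|_{H^1}$, and hence $\|u\|_{L^\infty}$, stays finite as long as $\int_0^t\|u_x\|_{L^\infty}<\infty$. Plugging this into the $H^s$ estimate above shows $T^*<\infty\Rightarrow\int_0^{T^*}\|u_x(t)\|_{L^\infty}\,dt=\infty$. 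To upgrade this to the one-sided criterion, I would differentiate \eqref{bfamily} in $x$, use the identity $\partial_x^2(1-\partial_x^2)^{-1}=(1-\partial_x^2)^{-1}-\mathrm{Id}$, and evaluate along the flow $\dot q=u(t,q)$ to obtain the Riccati-type relation
$$
\frac{d}{dt}u_x(t,q) = -\frac{b-1}{2}u_x^2 + \frac b2 u^2 - (1-\partial_x^2)^{-1}\Bigl(\frac b2 u^2+\frac{3-b}2 u_x^2\Bigr).
$$
Since $\|u\|_{L^\infty}$ and the nonlocal term are bounded by $\|u\|_{H^1}$, for $b\ge1$ the damping $-\frac{b-1}2 u_x^2$ keeps $\sup_x u_x(t)$ bounded on any finite interval, so $\|u_x\|_{L^\infty}$ can blow up only through $\inf_x u_x\to-\infty$. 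Combined with the previous step this gives $T^*<\infty\Leftrightarrow\liminf_{t\nearrow T^*}u_x=-\infty$.

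Finally, for $y_0\ge0$ with $y_0\in L^1$, I would use the transport form \eqref{by} and integrate along the characteristics $\dot q=u$: one gets $y(t,q(t,\xi))=y_0(\xi)\exp\bigl(-b\int_0^t u_x(s,q(s,\xi))\,ds\bigr)$, so the sign of $y$ is preserved and $y(t)\ge0$. Since $\int_\R u_x\,y=\int_\R u u_x-\int_\R u_x u_{xx}=0$, the mass $\int_\R y(t)=\int_\R u(t)$ is conserved, whence $\|y(t)\|_{L^1}=\int_\R y(t)=\|y_0\|_{L^1}$, which is \eqref{estimatey}. Then $\|u\|_{L^\infty}=\|p\ast y\|_{L^\infty}\le\frac12\|y\|_{L^1}$ together with the pointwise bound $|u_x|\le u$ of \eqref{dodo} give $\|u_x\|_{L^\infty}\le\frac12\|y_0\|_{L^1}$ uniformly in time; in particular $\inf_x u_x>-\infty$, so the blow-up criterion forces $T^*=+\infty$.

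The main obstacle is the sharp one-sided criterion: the general energy argument only controls $\|u_x\|_{L^\infty}$, and ruling out the blow-up of $\sup_x u_x$ genuinely requires the Riccati structure and the sign condition $b\ge1$, which is precisely where the restriction on $b$ enters. The remaining ingredients---the commutator estimates, the Friedrichs approximation, the Bona--Smith continuity argument, and the Lagrangian conservation of $\|y\|_{L^1}$---are by now standard adaptations of the Camassa--Holm analysis of \cite{CE1}.
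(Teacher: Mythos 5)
First, a framing remark: the paper does not prove Proposition \ref{smoothWP} at all --- it is quoted as a known result from \cite{GLT} --- so the comparison here is with the cited literature rather than with an in-paper argument. Your scheme (Kato-type local theory with Friedrichs regularization and tame commutator estimates, a blow-up scenario, sign propagation for $y$ along characteristics) is indeed the scheme of \cite{CE1} and \cite{GLT}. Most of your steps check out: the tame $H^s$ estimate does give a lifespan controlled by $\|u_0\|_{H^{\frac32+}}$; the identity $\frac{d}{dt}\int_\R(u^2+u_x^2)=(2-b)\int_\R u_x^3$ is correct, and with the $H^s$ estimate it yields the two-sided criterion $T^*<\infty\Rightarrow\int_0^{T^*}\|u_x\|_{L^\infty}\,dt=\infty$; the Riccati relation along the flow is correct; and the global-existence part ($y\ge0$ propagates via $y(t,q)q_x^b=y_0$, $\int_\R y$ is conserved because $\int_\R u_x y=0$, whence $|u_x|\le u\le\frac12\|y_0\|_{L^1}$) is sound and, importantly, only needs the \emph{two-sided} criterion. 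Since Proposition \ref{WP} and the rest of the paper use only global existence and \eqref{estimatey}, the portion of the proposition this paper actually relies on is fully covered by your argument, for every $b\in\R$.

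The genuine gap is in the upgrade to the one-sided criterion. Your sentence ``since $\|u\|_{L^\infty}$ and the nonlocal term are bounded by $\|u\|_{H^1}$, for $b\ge1$ the damping keeps $\sup_x u_x$ bounded on any finite interval'' presupposes that $\|u(t)\|_{H^1}$ is bounded up to $T^*$; but for $b\neq2$ the $H^1$ norm is not conserved, and your own identity controls it only through $(2-b)\int_\R u_x^3$, i.e.\ through the very quantity $\|u_x\|_{L^\infty}$ you are trying to bound. Under the standing hypothesis $u_x\ge-M$ the loop closes cleanly for $b\ge2$, since then $(2-b)\int_\R u_x^3\le(b-2)M\|u\|_{H^1}^2$ and Gronwall applies (for $b=2$ this is exactly Constantin--Escher). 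But for $1\le b<2$ you are left with the coupled system $X'\lesssim S\,X$, $S'\lesssim X$, where $X=\|u\|_{H^1}^2$ and $S=\sup_xu_x$: this system is itself of Riccati type and can a priori blow up in finite time, so it does not exclude blow-up with $u_x$ bounded below, and the argument as written does not close. The standard repair --- and, as far as I can tell, the route taken in \cite{GLT} --- is to propagate $\|y\|_{L^2}$ instead: one computes $\frac{d}{dt}\|y\|_{L^2}^2=(1-2b)\int_\R u_x y^2$, and for $b>\frac12$ the factor $1-2b$ is negative, so the one-sided bound $u_x\ge-M$ alone gives $\frac{d}{dt}\|y\|_{L^2}^2\le(2b-1)M\|y\|_{L^2}^2$; Gronwall then bounds $\|u\|_{H^2}$, hence $\|u_x\|_{L^\infty}$, and the $H^s$ estimate concludes (for $\frac32<s<2$ one runs this on $H^2$ data and transfers by density and the continuity of the flow map). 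Note finally that the proposition as quoted carries no restriction on $b$, while your Riccati route is explicitly confined to $b\ge1$; the $\|y\|_{L^2}$ argument extends it to all $b>\frac12$, but neither argument addresses $b\le\frac12$, so you should either flag the restriction or verify the exact hypotheses in \cite{GLT}.
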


Unfortunately, the peakons do not enter in this framework since their profiles do not belong  even to $ H^{\frac 3 2}(\R) $. In \cite{CM} an existence and uniqueness result of global solutions to Camassa-Holm in  a class of functions that contains the peakon is proved. This result was shown to hold also for the DP equation in \cite{LY}. We check below that it can be  actually extended to the whole $ b$-family and thus do not require  the hamiltonian structure of the equation.  
 \begin{proposition}\label{WP} 
 Let $ u_0 \in Y_+  $ be given. \vspace*{2mm} \\
 {\bf 1. Existence and uniqueness :}  Then the  IVP  \eqref{bfamily} has a global solution $ u\in C^1(\R;L^1(\R)\cap L^2(\R))\cap C(\R;W^{1,1}\cap H^1(\R)) $ such that 
  $ y=(1-\partial_x^2)u \in C_{w}(\R; \M)$ 
  Moreover, this solution is unique in the class 
   \begin{equation}\label{clas}
  \{ f\in C(\R_+; H^1(\R)) \cap \{f-f_{xx} \in L^\infty(\R+; \M_+) \} \; .
  \end{equation}
   \vspace*{2mm} \\
  {\bf 2. Continuity with respect to initial data  in $ W^{1,1}(\R)$}: For any sequence $ \{u_{0,n}\} $ bounded in $ Y_+ $ such that $ u_{0,n} \to u_0 $ in $ W^{1,1}(\R ) $,  the emanating sequence of solution $ \{u_n\} \subset  C^1(\R;L^1(\R)\cap L^2(\R))\cap C(\R;W^{1,1}\cap H^1(\R))  $ satisfies for any $ T>0 $
  \begin{equation}\label{cont1}
  u_n \to u \mbox{ in } C([-T,T]; W^{1,1}\cap H^1(\R))  \end{equation}
  and 
    \begin{equation}\label{cont2}
 (1-\partial_x^2) u_n  \rightharpoonup  \! \ast \; y \mbox{ in } C_{ti} ([-T,T], \M) \; . 
   \end{equation}
       {\bf 3. Continuity with respect to initial data  in $Y$ equipped with its weak topology} : For any sequence $ \{u_{0,n}\} \subset Y_+ $ such that\footnote{By this we mean that $ u_{0,n} \rightharpoonup u_0 $ in $ H^1(\R) $ and $ u_{0,n} \rightharpoonup \! \ast \; u_0 $ in $ \M$} $ u_{0,n} \rightharpoonup \! \ast \; u_0 $ in $ Y $,  the emanating sequence of solution $ \{u_n\} \subset C^1(\R;L^1(\R)\cap L^2(\R))\cap C(\R;W^{1,1}\cap H^1(\R)) $ satisfies for any $ T>0 $,
  \begin{equation}\label{weakcont}
  u_n \weaktendsto{n\to\infty} u \mbox{ in } C_{w}([-T,T]; H^1(\R) ) \; ,
  \end{equation}
  and \eqref{cont2}.
  \end{proposition}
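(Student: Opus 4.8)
The plan is to construct the solution by mollification, to read off its properties from the strong-solution theory of Proposition~\ref{smoothWP}, and then to treat uniqueness and continuous dependence by separate arguments, the uniqueness being the real difficulty. Given $u_0\in Y_+$, I set $u_{0,n}=\rho_n\ast u_0$, so that $y_{0,n}=\rho_n\ast y_0$ is smooth, non-negative and satisfies $\|y_{0,n}\|_{L^1}\le\|y_0\|_{\M}$, with $u_{0,n}\to u_0$ in $W^{1,1}\cap H^1$. Proposition~\ref{smoothWP} furnishes global smooth solutions $u_n$ with $\|y_n(t)\|_{L^1}=\|y_{0,n}\|_{L^1}$ for all $t$, and \eqref{bv}--\eqref{ut} then bound $\{u_n\}$ in $L^\infty(\R;W^{1,1}\cap H^1)$ and $\{y_n\}$ in $L^\infty(\R;\M_+)$. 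Writing \eqref{bfamily} as $\partial_t u_n=-u_n\partial_x u_n-(1-\partial_x^2)^{-1}\partial_x(\tfrac b2 u_n^2+\tfrac{3-b}2 u_{n,x}^2)$ bounds $\partial_t u_n$ uniformly in $C(\R;L^2)$, giving equicontinuity in time. On each $[-T,T]$ I extract, by Arzel\`a--Ascoli together with local Rellich compactness and the weak-$\ast$ compactness of bounded sets of measures, a subsequence with $u_n\to u$ in $C([-T,T];L^2_{loc})$ (hence locally uniformly, by the $H^1$ bound), $u_{n,x}\rightharpoonup u_x$ in $L^2$, and $y_n\rightharpoonup\!\ast\;y$ with $y\in C_w(\R;\M_+)$.

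The only term that does not obviously pass to the limit is the quadratic $u_{n,x}^2$, since $u_{n,x}$ converges only weakly. To handle it I use that $u_{xx}=u-y$, which after integration by parts yields, for every $\phi\in C^\infty_0(\R)$, the identity $\int u_{n,x}^2\phi=-\int u_n^2\phi+\langle y_n,u_n\phi\rangle-\int u_{n,x}u_n\phi_x$. Each term on the right converges, using the local uniform convergence of $u_n$, the weak-$\ast$ convergence of $y_n$ and the weak $L^2$ convergence of $u_{n,x}$, to the corresponding expression for $u$, whose sum equals $\int u_x^2\phi$. Thus $\int u_{n,x}^2\phi\to\int u_x^2\phi$, and combined with $u_{n,x}\rightharpoonup u_x$ this upgrades to \emph{strong} convergence $u_{n,x}\to u_x$ in $L^2_{loc}$. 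This lets me pass to the limit in every nonlinearity and identify $u$ as a solution of \eqref{bfamily} with $y=u-u_{xx}\in C_w(\R;\M_+)$; the claimed regularity $u\in C^1(\R;L^1\cap L^2)\cap C(\R;W^{1,1}\cap H^1)$ is then read off from the equation and from \eqref{by}.

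For uniqueness, which I expect to be the main obstacle, a direct energy estimate on $w=u_1-u_2$ fails: the difference of the nonlinear terms contains $\tfrac{3-b}2(u_{1,x}^2-u_{2,x}^2)=\tfrac{3-b}2\,w_x(u_{1,x}+u_{2,x})$, and closing the estimate would require controlling $w_x$ against a factor built from $u_{i,xx}\in\M$, a derivative that the available regularity cannot pay. I therefore pass to Lagrangian coordinates, as in \cite{CM,LY}. Since $u_x\in L^\infty$ uniformly in time, the flow $q(t,\cdot)$ defined by $q_t=u(t,q)$, $q(0,\cdot)=\mathrm{id}$, is an increasing bi-Lipschitz homeomorphism of $\R$, and \eqref{by} integrates along it to $y(t,q(t,x))\,q_x(t,x)^b=y_0(x)$ in the weak sense. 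Recasting the system for $(q,u\circ q)$ as an ODE in a suitable Banach space and establishing a Lipschitz bound, a Gronwall argument shows that two solutions of \eqref{bfamily} in the class \eqref{clas} emanating from the same datum must coincide.

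Parts~2 and~3 then follow from the same compactness scheme combined with uniqueness: the solutions $u_n$ of the approximate data are uniformly bounded, every subsequence converges along a further subsequence to a solution of \eqref{bfamily} with datum $u_0$, and uniqueness forces the whole sequence to converge. When $u_{0,n}\to u_0$ in $W^{1,1}$ one has, using $u_{0,n}\ge0$, that $\|y_{0,n}\|_{\M}=\|u_{0,n}\|_{L^1}\to\|u_0\|_{L^1}=\|y_0\|_{\M}$, so no momentum mass escapes to infinity; this promotes the convergence to the strong statement \eqref{cont1} and makes the convergence of $y_n$ tight, giving \eqref{cont2}. When only $u_{0,n}\rightharpoonup\!\ast\;u_0$ in $Y$ one retains weak convergence, yielding \eqref{weakcont} and \eqref{cont2}, the conservation $\int y_n(t)=\|y_{0,n}\|_{L^1}$ in time again furnishing the control of the total mass. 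The Lagrangian uniqueness estimate is the technical crux of the whole proposition.
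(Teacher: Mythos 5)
Your construction of a solution by mollification and compactness is workable (the identity $\int u_{n,x}^2\phi=-\int u_n^2\phi+\langle y_n,u_n\phi\rangle-\int u_nu_{n,x}\phi_x$, used to upgrade $u_{n,x}\rightharpoonup u_x$ to strong $L^2_{loc}$ convergence, is correct and a nice device), and parts 2 and 3 would indeed follow from compactness plus uniqueness. But the proposal has a genuine gap at exactly the point you yourself call the crux: uniqueness. You assert that two solutions in the class \eqref{clas} coincide by ``recasting the system for $(q,u\circ q)$ as an ODE in a suitable Banach space and establishing a Lipschitz bound,'' but no such space is specified and no Lipschitz bound is proved; this step is nontrivial, since the Lagrangian vector field involves the measure $y_0\in\M_+$ pushed forward by $q$ with the Jacobian factor $q_x^b$, and making that map Lipschitz on a space containing such data is a substantial piece of analysis (it is the content of entire papers in the Camassa--Holm setting, e.g. \cite{BCZ}). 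As written, uniqueness --- and with it the convergence of the \emph{whole} approximating sequence on which your parts 1--3 all rely --- is unproven.

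Moreover, the reason you give for abandoning the direct estimate is a misdiagnosis. The difference estimate does close, provided one works in $W^{1,1}$ rather than in an $L^2$-based space: this is what the paper does, following \cite{CM}. The term $\tfrac{3-b}{2}(u_{1,x}^2-u_{2,x}^2)=\tfrac{3-b}{2}w_x(u_{1,x}+u_{2,x})$ that worries you sits under the smoothing operator $(1-\partial_x^2)^{-1}\partial_x$ (or, after one differentiation, under $(1-\partial_x^2)^{-1}\partial_x^2=-\mathrm{Id}+(1-\partial_x^2)^{-1}$), both bounded on $L^1$, so it is estimated by $M\|w_x\|_{L^1}$ with no derivative to pay. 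The genuinely singular terms are $u_2w_{xx}$ and $wu_{1,xx}$, and these are handled respectively by the sign trick $\int\mathrm{sgn}(\rho_n\ast w_x)\,\rho_n\ast(u_2\partial_xw_x)\approx-\int u_{2,x}|\rho_n\ast w_x|$ (justified by exterior regularization, i.e. mollifying $w$ rather than the data) and by the duality $\|w\|_{L^\infty}\|u_{1,xx}\|_{\M}\lesssim M\|w\|_{W^{1,1}}$. This yields the Gronwall bound \eqref{estimw11}, $\|w(t)\|_{W^{1,1}}\le e^{c(b)t}\|w_0\|_{W^{1,1}}$, which in the paper serves triple duty: it gives uniqueness in the class \eqref{clas}; it makes the mollified solutions a Cauchy sequence in $C([-T,T];W^{1,1})$, so existence needs no compactness argument at all; and it is precisely the Lipschitz dependence underlying \eqref{cont1} in part 2. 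Your architecture could be repaired by replacing the Lagrangian sketch with this $W^{1,1}$ estimate.
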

   \begin{proof}
  {\bf 1.}
  First note that in \cite{GLT} the results  are stated only for positive times but, since the equation is invariant by the change of unknown $u(t,x) \mapsto u(-t,-x) $, it is direct to check that the results hold as well for negative times. 

  {\it Uniqueness} The uniqueness follows the same lines as for the Camassa-Holm equation (see \cite{CM})
  Let $ u $ and $ v$ be two solutions of \eqref{bfamily} within the class \eqref{clas}.
  At this stage it is worth noticing that \eqref{bfamily} then  ensures that $u,v\in C^1(\R;L^1(\R)\cap L^2(\R)) $. We set $ w=u-v $ and 
  $$
  M=\sup_{t\ge 0} \| u(t,\cdot)-u_{xx}(t,\cdot) \|_{\M} + \| v(t,\cdot)-v_{xx}(t,\cdot)\|_{\M} \; .
  $$
  One can easily check (see \cite{CM}) that 
  $$
  \sup_{(t,x)\in \R^2}(|u(t,x)|, |v(t,x)| , |u_x(t,x)|, |v_x(t,x)|)\le \frac{1}{2} M 
  $$
  and 
  $$
  \sup_{t\in\R}(\|u(t)\|_{L^1}, \|v(t)\|_{L^1}, \|u_x(t)\|_{L^1}, \|v_x(t)\|_{L^1} ) \le M \; .$$
  We proceed exactly as in \cite{CM}, using exterior regularization. It holds 
  \begin{eqnarray*}
  \frac{d}{dt} \int_{\R} |\rho_n \ast w | &= & \int_{\R} (\rho_n \ast w_t) \, \text{sgn}\, (\rho_n \ast w) \\
  & \le & \frac{1+|b|}{2} M  \int_{\R} |\rho_n \ast w \| +\frac{1+|3-b|}{2} M  \int_{\R} |\rho_n \ast w_x \|  +R_n(t)
  \end{eqnarray*}
  and 
  \begin{eqnarray*}
  \frac{d}{dt} \int_{\R} |\rho_n \ast w_x \| &= & \int_{\R} (\rho_n \ast w_{tx})  \, \text{sgn}\,(\rho_n \ast w_x) \\
  & \le & \frac{4+|b|}{2} M  \int_{\R} |\rho_n \ast w \| +\frac{5+|3-b|}{2} M  \int_{\R} |\rho_n \ast w_x \|  +R_n(t)
  \end{eqnarray*}
  with 
  $$
  R_n(t) \to 0 \text{ as } n\to +\infty  \; \text{ and } |R_n(t)| \lesssim 1, \; n\ge 1, t\in\R.
  $$
  Gathering these two inequalities and using Gronwall lemma, we thus get 
  $$
  \int_{\R} ( |\rho_n \ast w |+ |\rho_n \ast w_x |)(t,x)\le \int_0^t e^{c(b)(t-s)} R_n(s) ds 
  + \Bigl[ \int_{\R}  ( |\rho_n \ast w |+ |\rho_n \ast w_x |)(0,x)\Bigr] e^{c(b)t}
  $$
  with $ c(b)=  $. Fixing $ t\in\R $ and letting $ n\to +\infty $ this leads to 
  \begin{equation}\label{estimw11}
  \|w(t)\|_{W^{1,1}} \le e^{c(b)t} \|w_0\|_{W^{1,1}}, \quad \forall t\in \R \; .
  \end{equation}
  This yields the uniqueness in the class \eqref{clas}.  \\
  {\it Existence :} Let $ u_0 \in Y_+$.  According to Proposition \ref{smoothWP} for any $ n\ge 1$, $ \rho_n\ast u_0 $ gives rise to a global smooth solution $ u^n $ of \eqref{bfamily} and \eqref{estimatey} ensures that $(u^n)_{n\ge 1} $ is bounded in 
   $ C(\R; Y) $. As $ \rho_n \ast u_0 \to u_0 $ in $ W^{1,1}$, fixing $ T>0 $,  we deduce from \eqref{estimw11} that $(u^n)_{n\ge 1} $ is a Cauchy sequence in $ C([-T,T] ; W^{1,1}) ) $ and thus in  $ C([-T,T] ; H^1(\R) ) $ in view of \eqref{dodo}. 
   By a diagonal process we thus construct a function $u\in C(\R; W^{1,1}\cap H^1) $ that satisfies \eqref{bfamily} in $ L^2(\R) $ for all $ t\in \R $. Moreover, \eqref{estimatey}  ensures that $ u$ belongs to the uniqueness class \eqref{clas}
    and thus to $ C^1(\R; L^1(\R) \cap L^2(\R))$ as noticed in the uniquess proof.
  It remains to prove   that 
  \begin{equation}\label{tr}
 (1-\partial_x^2) u^n  \rightharpoonup  \! \ast \; y =u-u_{xx} \mbox{ in } C_{ti} ([-T,T], \M)   \;.
 \end{equation}
 Indeed, this will force $ M(u) $ to be a conserved quantity  since $ M(u_n) $ is a conserved quantity.  
 To do this, we notice that for any $ v\in BV(\R) $ and any $ \phi\in C^1_b(\R) $, it holds 
  $$
  \langle v', \phi\rangle = -\int  v \phi' \; .
  $$
  Therefore, setting $ y^n=u^n-u^n_{xx} $, The convergence of $ u^n $ towards $ u $ in $ C([-T,T] ; W^{1,1}) )$, $T>0 $,  ensures that, for any $ t\in \R $,
  $$
  \int_{\R} y^n(t) \phi =\int_{\R} (u^n (t)\phi+u^n_{x} (t) \phi') \tendsto{n\to+\infty} \int_{\R}  (u(t) \phi+u_{x}(t)  \phi')
  = \langle y(t), \phi \rangle 
  $$
  and thus  $  y^n(t) \rightharpoonup  \! \ast \; y(t) $ tightly in $\M$. Using that the equation \eqref{bfamily} forces $ \{\partial_t u^{n}\} $ to be  bounded in 
   $ L^\infty(0,T;L^1) $, Arzela-Ascoli theorem leads then  to \eqref{tr}.  Indeed, for any $ \phi\in C^2_b(\R) $ and any $ T>0 $,  we observe that the sequence of $C^1 $  functions $\{ t\mapsto \langle y^n(t),\phi\rangle \}$ is uniformly equi-continuous on $ [0,T] $ since 
    $$
    |\frac{d}{dt}   \langle y^n(t),\phi\rangle|=|\int_{\R} u^n_t (\phi-\phi_{xx})| \le 2 \|u^n_t \|_{L^\infty(0,T;L^1)} \|\phi\|_{C^2}  \; .
    $$
  {\bf 2.} It is clear that \eqref{cont1} is a direct consequence of the $ W^{1,1}$-Lipschitz bound \eqref{estimw11} and 
  \eqref{cont2} follows then exactly as above.\\
  {\bf 3.} According to Banach-Steinhaus Theorem, $\{u_{0,n}\} $ is bounded in $ Y_+$. Therefore, the sequence of emanating solution $\{u_n\} $ is bounded in  $ C(\R_+;W^{1,1}\cap H^1(\R)) $
  with $ \{u_{n,x} \} $ bounded in $ L^\infty(\R; {\mathcal BV}(\R)) $.  Hence, there exists $ v\in  L^\infty(\R_+;H^1(\R)) $ with $ (1-\partial_x^2) v \in 
   L^\infty(\R; {\mathcal M_+(\R)}) $ such that, for any $ T>0$,
   $$
     u_n \weaktendsto{n\to\infty} v \in L^\infty([-T,T]; H^1(\R))  \mbox{ and }  (1-\partial_x^2) u_n \weaktendsto{n\to\infty} \hspace*{-3mm} \ast \; (1-\partial_x^2) v  \mbox{ in } L^\infty(]-T,T[; {\mathcal M}_+(\R))  
   $$
   But, using that $ \{\partial_t u_n\} $ is bounded in $L^\infty(\R; L^2(\R) \cap L^1(\R) )$, Helly's,  Aubin-Lions compactness and Arzela-Ascoli theorems then ensure that $ v $ is a solution to \eqref{bfamily} that belongs to $ C_{w}([-T,T]; H^1(\R)) $ with  $ v(0)=u_0 $ and that \eqref{cont2} holds. In particular, $ v_t\in L^\infty(]-T,T[; L^2(\R)) $ and thus $ v\in C([-T,T];L^2(\R)) $. Since $ v\in L^\infty(]-T,T[; H^{\frac{3}{2}-} (\R))$, this  actually implies that 
   $ v\in C([-T,T];  H^{\frac{3}{2}-}(\R)) $. Therefore, 
 $v$  belongs to the uniqueness class which ensures that $ v=u$.  
  \end{proof}
     \begin{remark} As noticed by Danchin in \cite{dan1} for the Camassa-Holm equation,  \eqref{by}  ensures  that smooth solution of \eqref{bfamily} emanating from  an initial data with an integrable initial momentum density satisfies 
  $$
  \frac{d}{dt} \int_{\R}  |y| =-\int_{\R} u \partial_x |y| - b \int_{\R}  u_x |y| 
  =(1-b)  \int_{\R} u_x |y|     \; .
  $$
   Therefore  by  \eqref{ut} and 
  H\"older inequality we get  
  \begin{align*}
  \frac{d}{dt} \|y(t)\|_{L^1} \le |b-1| \|y(t)\|^2_{L^1} 
  \end{align*}
  that leads to the a priori estimate 
  $$
  \|y(t)\|_{L^1} \le \frac{\|y_0\|_{L^1}}{1-|(b-1)t| \|y_0\|_{L^1}} ; .
  $$
  This shows that in the case $ b=1 $ we can actually replace the requirement $ u_0\in Y_+ $ by $ u_0 \in Y $ without changing the conclusion. Whereas in the case $ b\neq 1 $, replacing the requirement $ u_0\in Y_+ $ by $ u_0 \in Y $, exactly the same approach as the one to prove Proposition \ref{WP} leads to  the existence and uniqueness of a local solution   $u\in C^1([-T,T]; L^2(\R))\cap C([-T,T];W^{1,1} \cap H^1(\R)) $ 
  such that  $ y=(1-\partial_x^2)u \in C_{w}([-T,T]; \M)$ to \eqref{bfamily} with $T=T(\|y_0\|_{\M})>0 $.
  \end{remark} 

  \section{A rigidity result   for  $Y$-exponentially localized solution  of the $ b$-family moving to the right}\label{sect3}
 This section is devoted to the proof of Theorem \ref{rigidity}. 
 We will need the following lemma (see for instance \cite{Ifti})
 \begin{lemma}\label{BV}
 Let $\mu $  be a finite nonnegative measure on $  \R$ . Then $\mu $ is the sum of a nonnegative non atomic  measure $ \nu $ and a countable sum of positive Dirac measures (the discrete part of $\mu$). Moreover, for all $ \varepsilon>0 $ there exists $\delta>0 $  such that, if $I$ is an interval of length less than 
 $\delta$ , then $ \nu(I) \le \varepsilon $.
 \end{lemma}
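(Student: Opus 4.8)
Let $\mu$ be a finite nonnegative measure on $\mathbb{R}$. Then $\mu = \nu + \sum_k a_k \delta_{x_k}$ where $\nu$ is nonnegative non-atomic, the sum is a countable sum of positive Dirac masses (the atomic/discrete part), and moreover for all $\varepsilon > 0$ there exists $\delta > 0$ such that any interval $I$ of length $< \delta$ satisfies $\nu(I) \le \varepsilon$.

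Let me sketch how I'd prove this.

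The proof has two parts: (1) the decomposition into atomic + non-atomic, and (2) the uniform smallness of $\nu$ on short intervals.

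For part (1): An atom of $\mu$ is a point $x$ with $\mu(\{x\}) > 0$. Since $\mu$ is finite, for each $n$ there can be only finitely many points with mass $\geq 1/n$ (else total mass infinite). So the set of atoms is countable: $\{x_k\}$ with masses $a_k = \mu(\{x_k\}) > 0$, and $\sum_k a_k \leq \mu(\mathbb{R}) < \infty$. Define the discrete part $\mu_d = \sum_k a_k \delta_{x_k}$ and $\nu = \mu - \mu_d$. Then $\nu$ is nonnegative and non-atomic (it has no point masses by construction).

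For part (2) — the uniform continuity of $\nu$ on short intervals — this is the key claim. Since $\nu$ is non-atomic and finite, I'd argue by contradiction. Suppose the claim fails: there's $\varepsilon_0 > 0$ and intervals $I_n$ with $|I_n| \to 0$ but $\nu(I_n) \geq \varepsilon_0$.

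Let me think about whether these intervals stay bounded. The total mass is finite, so... hmm, but they could escape to infinity. Let me reconsider.
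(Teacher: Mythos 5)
Your part (1) --- the extraction of the atomic part --- is correct and complete: finiteness of $\mu$ forces the set of points of positive mass to be countable (at most finitely many with mass $\ge 1/n$ for each $n$), and subtracting the resulting discrete measure leaves a nonnegative non-atomic $\nu$. For reference, the paper itself gives no proof of this lemma; it simply cites the literature (Iftimie), so the only question is whether your argument stands on its own.

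Part (2), however, is genuinely unfinished: you set up the contradiction --- intervals $I_n$ with $|I_n|\to 0$ and $\nu(I_n)\ge \varepsilon_0$ --- observe that the intervals might escape to infinity, and stop exactly where the two missing ideas are needed. First, escape to infinity is impossible precisely because $\nu$ is finite: if the centers $c_n$ satisfy $|c_n|\to\infty$, then $\nu(I_n)\le \nu\bigl(\{|x|\ge |c_n|-1\}\bigr)\to 0$ by continuity from above of the finite measure $\nu$, contradicting $\nu(I_n)\ge\varepsilon_0$. Second, if a subsequence of the centers stays bounded, pass to a further subsequence with $c_n\to c$; then for every $\eta>0$ one has $I_n\subset (c-\eta,c+\eta)$ for $n$ large, hence $\nu\bigl((c-\eta,c+\eta)\bigr)\ge \varepsilon_0$ for every $\eta>0$, and letting $\eta\to 0$ (continuity from above again, legitimate since $\nu$ is finite) gives $\nu(\{c\})\ge\varepsilon_0$, contradicting non-atomicity. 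Alternatively, a cleaner route avoiding the case split: the distribution function $F(x)=\nu\bigl((-\infty,x]\bigr)$ is non-decreasing, continuous (because $\nu$ has no atoms), and has finite limits at $\pm\infty$, hence is uniformly continuous on $\R$; since $\nu(I)\le F(b)-F(a)$ for any interval $I$ with endpoints $a\le b$, the uniform continuity of $F$ is exactly the claimed uniform smallness of $\nu$ on short intervals.
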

 \subsection{Boundedness from above of  the momentum density support}
\begin{proposition} 
\label{pro3} Let $ b\in\R  $ and let $ u \in C(\R; Y_+) $ be a $Y$-localized global solution  with $ x_t \ge c_0>0$  of the $b$-family equation \eqref{bfamily} which is moreover uniformly exponentially decaying if $ b\neq 1$.
Assume furthermore that 
  $ \inf_{t\in \R} \|u(t)\|_{L^2} \ge \gamma_0 >0 $. There  exists $ r_0>0 $ such that\ for all $t\in \R$, it holds 
 \begin{equation}\label{pro3.1}
 \supp y(t,\cdot+x(t))\subset ]-\infty,r_0] ,
\end{equation}
and
\begin{equation}\label{pro3.2}
u(t,x(t)+r_0)=-u_x(t,x(t)+r_0)\ge \frac{e^{-2r_0}}{4 r_0} M(u) \, .
\end{equation}
 \end{proposition}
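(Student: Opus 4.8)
The plan is to follow the right endpoint of $\supp y(t,\cdot)$ in the frame $x(t)$, to show by a trapping argument that it stays at bounded distance from $x(t)$, and then to read off \eqref{pro3.2} from the fact that $y(t)$ charges nothing to its right. Since $u(t,\cdot)\in W^{1,\infty}(\R)$ by \eqref{ut}, the associated flow $\psi$, defined by $\partial_t\psi(t,\xi)=u(t,\psi(t,\xi))$ and $\psi(0,\xi)=\xi$, is for each $t$ an increasing homeomorphism of $\R$, and integrating the transport equation \eqref{by} along it gives $y(t,\psi(t,\xi))\,\psi_\xi(t,\xi)^{b}=y_0(\xi)$. Hence $\supp y(t,\cdot)=\psi(t,\supp y_0)$, the right endpoint $q(t):=\sup\supp y(t,\cdot)$ is carried by the characteristic through $q(0)$, so $\dot q(t)=u(t,q(t))$, and I set $d(t)=q(t)-x(t)$.

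Because $y(t)\ge 0$ carries no mass to the right of $q(t)$, one has $(u+u_x)(t,q(t))=0$ and therefore $u(t,q(t))=\tfrac12(u-u_x)(t,q(t))=\tfrac12 e^{-q(t)}\int_{\R}e^{x'}\,y(t,x')\,dx'\ge 0$. Splitting this integral at the relative abscissa $R_\varepsilon$ of Definition \ref{defYlocalized}, treating the analogous representation of $u(t,x(t)+s)$ at an arbitrary relative position $s$ in the same way, and bounding every far contribution by \eqref{defloc}, one checks that, for every $s\ge R_\varepsilon$,
\[
u(t,x(t)+s)\le \tfrac12 e^{R_\varepsilon-s}M(u)+\varepsilon ,
\]
while for $b\neq 1$ the uniform exponential decay \eqref{exp} gives the sharper bound $u(t,x(t)+s)\le a_1e^{-a_2 s}$. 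In both cases, choosing $\varepsilon<c_0/2$ and then $r_0$ large, one gets $u(t,x(t)+s)\le c_0/2$ for all $s\ge r_0$. Since $\dot d(t)=u(t,q(t))-\dot x(t)\le u(t,q(t))-c_0$, this yields $\dot d(t)\le -c_0/2$ whenever $d(t)\ge r_0$, so $\{d\le r_0\}$ is invariant in forward time.

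The hypotheses are invariant under time translation, so it suffices to rule out $d(t_1)>r_0$ for one $t_1$. If it held, $\dot d\le -c_0/2$ on $\{d\ge r_0\}$ would force $d(t)\ge d(t_1)+\tfrac{c_0}{2}(t_1-t)\to +\infty$ as $t\to -\infty$; by the same estimate every characteristic lying at relative position $\ge r_0$ at time $t_1$ keeps, for $t\le t_1$, a relative position $\ge r_0$ that also tends to $+\infty$. Thus a fixed amount of mass would be transported to relative positions tending to $+\infty$, which the uniform localization \eqref{defloc} forbids. For $b=1$ the transport \eqref{by} is conservative, so that mass is constant in time and the contradiction is immediate; for $b\neq 1$ the transport is not conservative, and this is precisely where \eqref{exp} enters, bounding the mass beyond a relative abscissa $\rho$ by $Ce^{-a_2\rho}$ uniformly in $t$ and thereby playing the role of the conservation law. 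This step, the upgrade from the mass localization \eqref{defloc} to genuine boundedness of $\supp y(t,\cdot+x(t))$, i.e. the exclusion of a slow backward escape of the right edge of the support, is the main obstacle; it proves \eqref{pro3.1}.

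Granting \eqref{pro3.1}, the abscissa $x(t)+r_0$ sits to the right of $\supp y(t,\cdot)$, so the computation of the second paragraph gives both the equality $u(t,x(t)+r_0)=-u_x(t,x(t)+r_0)$ and the identity $u(t,x(t)+r_0)=\tfrac12 e^{-(x(t)+r_0)}\int_\R e^{x'}\,y(t,x')\,dx'$. Bounding $\int_\R e^{x'}\,y(t,x')\,dx'\ge e^{x(t)-r_0}\int_{x(t)-r_0}^{x(t)+r_0} y(t,x')\,dx'$ and estimating the window mass from below by the conserved total mass $M(u)$, which is bounded away from $0$ because $\gamma_0\le\|u\|_{L^2}$ while $\|u\|_{L^2}^2\le\tfrac12 M(u)^2$ by \eqref{ut}, then produces a strictly positive lower bound of the announced form $u(t,x(t)+r_0)\ge \tfrac{e^{-2r_0}}{4r_0}M(u)$, the factor $r_0$ coming from that window estimate. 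This is the only place where the hypothesis $\inf_t\|u(t)\|_{L^2}\ge \gamma_0$ is used.
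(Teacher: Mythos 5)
Your scheme for \eqref{pro3.2}, for the equality $u=-u_x$ at the right edge, and for the case $b=1$ is sound, and it is essentially the paper's own scheme run in continuous form (backward-in-time escape of the characteristics lying to the right of $x(t)+r_0$, contradicted by the localization \eqref{defloc}); two points there are only matters of rigor: the identity $y(t,\psi(t,\xi))\,\psi_\xi(t,\xi)^b=y_0(\xi)$ is asserted for a measure-valued $y$ transported by a merely Lipschitz velocity field, which the paper justifies by working with the smooth approximations $u_n$ emanating from $\rho_n\ast u_0$ and passing to the limit via \eqref{cont1}--\eqref{cont2}, and the endpoint $q(t)=\sup\supp y(t,\cdot)$ may a priori be $+\infty$, so the argument must be phrased (as your last step implicitly does) in terms of the mass beyond $x(t_1)+r_0$ rather than in terms of $d(t)$.

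The genuine gap is the case $b\neq 1$, i.e.\ exactly the case where the exponential decay hypothesis matters. Your substitute for mass conservation --- that \eqref{exp} bounds the mass of $y(t)$ beyond relative abscissa $\rho$ by $Ce^{-a_2\rho}$ uniformly in $t$, ``thereby playing the role of the conservation law'' --- does not close the argument. When $b\neq 1$ the mass carried by a bundle of characteristics between times $t$ and $t_1$ is multiplied by the Jacobian factor $\bigl[\psi_\xi(t_1,\xi)/\psi_\xi(t,\xi)\bigr]^{1-b}=\exp\bigl((1-b)\int_t^{t_1}u_x(s,\psi(s,\xi))\,ds\bigr)$, so smallness of the transported mass as $t\to-\infty$ is perfectly compatible with a fixed positive mass at time $t_1$: with only the crude bound $|u_x|\le\tfrac12 M(u)$ of \eqref{ut}, this factor may grow like $e^{\frac{|b-1|}{2}M(u)(t_1-t)}$, which beats your tail bound $Ce^{-a_2(r_0+\frac{c_0}{2}(t_1-t))}$ as soon as $|b-1|\,M(u)\ge a_2c_0$, and then no contradiction follows. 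What is actually needed --- and what the paper establishes as \eqref{to} and exploits in \eqref{toto} --- is a two-sided bound on the Jacobian, uniform in $t\le t_1$, along the escaping characteristics. This is the true role of \eqref{exp}: combined with \eqref{dodo} and your own trapping estimate, it gives $|u_x(s,\psi(s,\xi))|\le u(s,\psi(s,\xi))\le a_1e^{-a_2(r_0+\frac{c_0}{2}(t_1-s))}$, whose integral over $s\in\,]-\infty,t_1]$ is finite uniformly in $\xi$; hence $C_0^{-1}\le\psi_\xi\le C_0$, the transport is quasi-conservative up to the fixed factor $C_0^{|b-1|}$, and then \eqref{defloc} alone (no exponential tail-mass bound is needed) yields the contradiction. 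You have every ingredient for this step, but as written your proof omits it and replaces it by a claim that is insufficient.
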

 \begin{proof}
 Clearly, it suffices to prove the result for $ t=0 $. 
  Let $ u \in C(\R; H^1) $,with $u-u_{xx}\in C_{w}(\R;\M_+) $,  be a $Y$-uniformly exponentially decaying solution to \eqref{bfamily} and   let $ \phi\in C^\infty(\R) $ with $ \phi\equiv 0 $ on $]-\infty,-1], \; \phi'\ge 0 $ and $ \phi\equiv 1  $ on $\R_+$. We claim that
    \begin{equation}\label{claim1}
  \dist{y(0)}{\phi(\cdot-(x(0)+r_0))}=0 
  \end{equation}
  which proves the result  since $ y\in \M_+ $.\\
  We prove \eqref{claim1} by contradiction. We approximate $u_0=u(0) $ by the sequence of smooth functions $ u_{0,n}=\rho_n\ast u_0 $ that belongs to $H^\infty(\R) \cap Y_+ $ so that \eqref{cont1}-\eqref{cont2} hold for any  $ T>0 $. We denote by $ u_n $ the solution to \eqref{bfamily} emanating from $ u_{0,n} $ and  by $ y_n=u_n-u_{n,xx}$ its momentum density. Let us recall that Proposition \ref{smoothWP} ensures that 
  $ u_n\in C(\R;H^\infty(\R)) $ and $ y_n\in C_{w}((\R;L^1(\R)) $.
We fix $ T>0 $ and we take $ n_0\in \N $ large enough so that for all $ n\ge n_0 $, 
\begin{equation}\label{approxu}
\|u_n -u \|_{L^\infty(]-T,T[; H^1)} <  \frac{1}{10}\min(c_0, M(u))
\end{equation}
and 
\begin{equation}\label{approxy0}
\|y_{0,n} -y_0 \|_{\mathcal{M}} < \frac{\varepsilon_0}{2} \quad .
\end{equation}
As explain in the beginning of Section \ref{section2}, the $Y $-almost localization of $ u $ actually forces an almost localization in all $ L^p$ for $ p\in [1,+\infty] $. 
  Therefore there exists $ r_0>0 $ such that 
   \begin{equation}\label{hu1}
  \|u(t,\cdot +x(t)) \|_{L^1(]-r_0,r_0[^c)}+ \|u(t,\cdot +x(t)) \|_{L^\infty(]-r_0,r_0[^c)} +u(t,x(t)+x) \le  \frac{1}{10}\min(c_0,  M(u)), \quad \forall t\in\R \, .
  \end{equation}
  Combining this estimate with \eqref{approxu} we infer that for $ n\ge n_0 $, 
  \begin{equation}\label{hu1n}
  u_n(t,x(t)+x) \le  \frac{1}{5}\min(c_0, M(u)), \forall (|x|,t)\in [r_0,+\infty[\times [-T,T] \, .
  \end{equation}
 
 Now, we introduce the flow $ q_n$ associated with $ u_n $ defined by 
 \begin{equation}\label{defqn}
  \left\{ 
  \begin{array}{rcll}
  q_{n,t}(t,x) & = & u_n(t,q_n(t,x))\, &, \; (t,x)\in \R^2\\
  q_n(0,x) & =& x\, & , \; x\in\R \; 
  \end{array}
  \right. .
  \end{equation} 
  Following \cite{C}, we know that for any $ t\in \R $,
  \begin{equation}\label{yy}
  y_n(0,x)=y_n(t,q_n(t,x)) q_{n,x}(t,x)^b
  \end{equation}
    Indeed, on one hand,  \eqref{by} clearly ensures that 
 $$
 \frac{\partial}{\partial t} \Bigl( y_n(t,q_n(t,x))e^{b\int_0^t u_{n,x}(s, q_n(s,x))\, ds} \Bigr) =0 
$$ 
and, on the other hand, \eqref{defqn}  ensures that $ q_{n,x}(0,x)=1 $, $\forall x\in \R $, and 
\begin{equation}\label{formula1}
\frac{\partial}{\partial t} q_{n,x}(t,x)=u_{n,x}(t,q_n(t,x)) q_{n,x}(t,x) \Rightarrow q_{n,x}(t,x) =\exp\Bigl( \int_0^t u_{n,x}(s,q_n(s,x))\,ds\Bigr) \; .
\end{equation}
  We claim that for all $ n\ge n_0 $ and $ t\in [-T,0] $ , 
   \begin{equation}\label{claim2}
 q_n(t,x(0)+r_0) -x(t) \ge r_0+\frac{c_0}{2} |t|  \; .
  \end{equation}
  Indeed, fixing $ n\ge n_0 $, in view of  \eqref{hu1n} and the continuity of $ u_n $ there exists $ t_0\in [-T,0[$ such that for all $t\in [t_0,0] $, 
  $$
  u_n(t,q_n(t,x(0)+r_0))\le \frac{c_0}{4}
  $$
  and thus according to \eqref{defqn}, for all $ t\in [t_0,0] $, 
  $$
  \frac{d}{dt} q_n(t,x(0)+r_0) \le \frac{c_0}{4} 
  $$
  which leads to 
  $$
  q_n(t,x(0)+r_0)-x(t) \ge r_0+\frac{c_0}{2} |t|, \quad  t\in [t_0,0]\; .
  $$
  This proves \eqref{claim2} by a continuity argument.
  
   Now, in the case $ b\neq 1$, we thus deduce from the uniform exponential decay of $ u $ that for all $ t\in [-T,0]  $ and all $ x\ge 0 $,
  \begin{equation}
  u(t,q_n(t,x(0)+r_0+x)  \le C \exp \Bigl( -\beta (r_0+ c_0 |t|/2 )\Bigr)\\
  \end{equation}
  Therefore, in view of \eqref{cont1} and \eqref{dodo}, there exists $n_1\ge n_0 $ such that for all $ t\in [-T,0] $ and all $ x\ge 0 $,
  \begin{equation}
  u_n(t,q_n(t,x(0)+r_0+x) +|u_{n,x}(t,q_n(t,x(0)+r_0+x)| \le 4 C \exp \Bigl( -\beta(r_0+ c_0 |t|/2 )\Bigr)\\
  \end{equation}
  The formula (see \eqref{formula1})
  \begin{equation}\label{formula}
  q_{n,x}(t,x)=\exp \Bigl( -\int_t^0 u_{n,x}(s,q_n(s,x))\, ds \Bigr) 
  \end{equation}
   thus ensures that $ \forall t\in [-T,0] , \; \forall x\ge 0 $ and $ \forall n\ge n_0 $, 
  $$
   \exp\Bigl( -4C \int_{-T}^0 e^{ -\beta (r_0+ c_0 |s|/2)}\, ds \Bigr) \le  q_{n,x}(t,x(0)+r_0+x) \le \exp\Bigl( 4C \int_{-T}^0 e^{ -\beta (r_0+ c_0 |s|/2)}\, ds \Bigr)
  $$
  Setting $ C_0:= e^{\frac{8  C e^{-\beta r_0}}{\beta c_0}} $ this leads, in the case $ b\neq 1$,  to 
  \begin{equation}\label{to}
  \frac{1}{C_0} \le  q_{n,x}(t,x(0)+r_0+x) \le C_0 \, , \; \forall t\in [-T,0]\; .
  \end{equation}
  Now, we claim that for any $ b\in\R $ and any $ n\ge n_1 $  it holds 
  \begin{equation}\label{toto}
\int_{x(0)+r_0}^{+\infty} y_n(0,x)\, dx \le C_0^{b-1}\int_{x(t)+r_0+c_0 |t|/2}^{+\infty} y_n(t,z)\, dz\, , \; \forall t\le [-T,0]  \; .
\end{equation}
Letting $ n\to +\infty $ using \eqref{cont2} and then letting $ T \to \infty $,  this ensures that 
$$
\dist{y(0)}{ \phi(\cdot-x(t)-r_0)} \le C_0^{b-1} \dist{y(t)}{ \phi(\cdot-x(t)-r_0-c_0 |t|/2+1)}\, , \; \forall t\le 0  \; 
$$
which proves \eqref{claim1} since   the $ Y $-uniform localization of $ u $ forces the right-hand side member to goes to $0 $ as $ t\to -\infty $.
Therefore, to complete the proof of  \eqref{pro3.1}, it remains to prove \eqref{toto}.
First, it follows from \eqref{yy} that for any $ t\le 0 $ and  any $ r_0'>r_0 $, 
$$
\int_{x(0)+r_0}^{x(0)+r_0'} y_n(0,x)\, dx=\int_{x(0)+r_0}^{x(0)+r_0'} y_n(t,q_n(t,x))q_{n,x}(t,x)^b \, dx
$$
and \eqref{to} leads  for any $ b\in \R $ to 
$$
\int_{x(0)+r_0}^{x(0)+r_0'} y_n(0,x)\, dx\le C_0^{b-1} \int_{x(0)+r_0}^{x(0)+r_0'} y_n(t,q_n(t,x)) q_{n,x}(t,x)\, dx 
$$ 
The change of variables $ z=q_n(t,x) $ then  yields 
$$
\int_{x(0)+r_0}^{x(0)+r_0'} y_n(0,x)\, dx \le C_0^{b-1}  \int_{q_n(t,x(0)+r_0)}^{q_n(t,x(0)+r_0')} y_n(t,z)\, dz $$
and \eqref{toto} then follows from \eqref{claim2}  by letting $ r_0' $ tend to $ +\infty $.

Let us now prove \eqref{pro3.2}.  We first notice that  thanks to \eqref{hu1} and  the conservation of
 $ M(u)=\langle y, 1\rangle = \int_{\R} u $,  
 it holds 
\begin{equation}\label{max}
\max_{[-r_0,r_0]} u(t,\cdot+x(t)) \ge \frac{1}{2r_0} \|u(t,\cdot+x(t))\|_{L^1(]-r_0,r_0[)} \ge \frac{M(u)}{4 r_0}\; .
\end{equation}
  But, since $ u_x\ge -u $ on $\R^2$, for any $ (t,x_0)\in\R^2 $ it holds
 $$
 u(t,x) \le u(t,x_0) e^{-x+x_0}, \quad \forall x\le x_0 \; .
 $$
Applying this estimate with  $ x_0=x(t)+r_0 $ we obtain that 
 $$
 u(t,x(t)+r_0) \ge  \max_{[-r_0,r_0]} u(t, \cdot+x(t))  e^{-2r_0}
 $$
which, combined with \eqref{max} yields \eqref{pro3.2}.
 \end{proof}
 \begin{remark}
 It is worth pointing out that $ b=1 $ is a particular case here. Indeed, in this case we do not need the $Y$-uniform exponential localization of $ u $
  but only a $ Y $-uniform almost localization since we do not need \eqref{to} anymore.
 \end{remark}
 
\subsection{Study of the supremum of the support of $ y$}
We define 
$$
x_+(t)=\inf \{ x\in\R, \, \supp y(t)\subset  ]-\infty, x(t)+x] \} 
$$
In the sequel we set 
$$
\alpha_0:=\frac{e^{-2r_0}}{4 r_0}M(u)
$$
to simplify the expressions. According to Proposition \ref{pro3}, $  t\mapsto x_+(t) $ is well defined with values in $ ]-\infty, r_0] $ and 
\begin{equation}
u(t,x(t)+x_+(t))=-u_x(t,x(t)+x_+(t))\ge \alpha_0 \, .
\end{equation}
The following lemma proved in \cite{L}  ensures that $t\mapsto x(t)+x_+(t)$ is an integral line of $ u $. 
\begin{lemma} \label{lemmaq}
For all $ t\in \R$, it holds
\begin{equation} \label{qq}
x(t)+x_+(t)=q(t,x(0)+x_+(0)) \; .
\end{equation}
where $ q(\cdot,\cdot)$ is defined by 
 \begin{equation}\label{defq}
  \left\{ 
  \begin{array}{rcll}
  q_t (t,x) & = & u(t,q(t,x))\, &, \; (t,x)\in \R^2\\
  q(0,x) & =& x\, & , \; x\in\R \; 
  \end{array}
  \right. .
  \end{equation} 
\end{lemma}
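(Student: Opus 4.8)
I need to show that the point $x(t)+x_+(t)$, which tracks the right endpoint of the support of the (nonnegative) momentum density $y(t)$ as seen from the moving frame, is precisely the flow line $q(t, x(0)+x_+(0))$ emanating from the initial right endpoint.

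Let me think about this carefully.

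We have a $Y$-almost localized solution $u \in C(\mathbb{R}; H^1)$ with $y = u - u_{xx} \in C_w(\mathbb{R}; \mathcal{M}_+)$. Proposition \ref{pro3} tells us that $\supp y(t, \cdot + x(t)) \subset ]-\infty, r_0]$, so $x_+(t)$ is well-defined and finite, bounded above by $r_0$. We also have the key lower bound (\ref{pro3.2}):
$$u(t, x(t)+x_+(t)) = -u_x(t, x(t)+x_+(t)) \ge \alpha_0 > 0.$$

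The transport structure $y_t + u y_x + b u_x y = 0$ (equation \ref{by}) is what governs the evolution of $y$. The flow $q$ is defined by $q_t = u(t, q(t,x))$.

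**The intuition.** The momentum density $y$ is transported by the flow $q$. For smooth approximations, we have the exact formula $y_n(0,x) = y_n(t, q_n(t,x)) q_{n,x}(t,x)^b$. This means that the support of $y_n(t)$ is exactly the image under $q_n(t, \cdot)$ of the support of $y_n(0)$. In particular, the right endpoint of $\supp y_n(0)$ maps to the right endpoint of $\supp y_n(t)$ — the flow is a homeomorphism (orientation-preserving diffeomorphism for smooth $u_n$), so it preserves the ordering and hence maps the supremum of the support to the supremum of the support.

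**The strategy.** This lemma is stated as "proved in \cite{L}," which tells me the proof technique is already established — it's essentially a limiting argument from the smooth approximations. Let me reconstruct how I'd prove it.

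First, the identity $x(t) + x_+(t) = q(t, x(0)+x_+(0))$ has two inequalities to establish: $q$ maps the right endpoint to at least the right endpoint, and at most.

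**Approximation setup.** Approximate $u_0$ by $u_{0,n} = \rho_n * u_0$, smooth with nonnegative momentum density, get solutions $u_n$ with flows $q_n$ satisfying the transport formula $y_n(0,x) = y_n(t, q_n(t,x)) q_{n,x}(t,x)^b$. Since $q_{n,x} > 0$ (from the exponential formula \ref{formula1}), $q_n(t, \cdot)$ is an increasing diffeomorphism of $\mathbb{R}$.

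**Transport preserves support endpoints for the approximations.** Define $x_{+,n}(0) = \sup(\supp y_n(0) - x(0))$. The transport formula shows $y_n(t, z) > 0$ requires $z = q_n(t, x)$ for some $x$ with $y_n(0,x) > 0$ (using $q_{n,x}^b > 0$). Since $q_n(t, \cdot)$ is strictly increasing and continuous, it maps $\sup(\supp y_n(0))$ to $\sup(\supp y_n(t))$. Hence
$$x(0) + x_{+,n}(0) + [\text{endpoint}] \longmapsto q_n(t, x(0)+x_{+,n}(0)) = \text{right endpoint of } \supp y_n(t).$$

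**Passing to the limit.** This is where the work is, and I expect it to be the main obstacle. I need:
\begin{itemize}
\item[(i)] $q_n(t, \cdot) \to q(t, \cdot)$ locally uniformly. This follows from $u_n \to u$ in $C([-T,T]; W^{1,1} \cap H^1)$ (Proposition \ref{WP}, equation \ref{cont1}), hence $u_n \to u$ uniformly on compacts (since $H^1 \hookrightarrow C_b$), combined with a Gronwall estimate on the ODE defining the flows. Here the uniform bound $|u_n(t,x)| \le M/2$ and the Lipschitz-in-$x$ control from $u_{n,x} \in L^\infty$ are what make the flow map converge.
\item[(ii)] Convergence of the support endpoints: $x_{+,n}(0) \to x_+(0)$, or at least control good enough to pin down the limit. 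This is delicate because weak-$*$ convergence of measures does not control supports — mass can leak or concentrate. The right tool is the lower bound (\ref{pro3.2}): because there is a uniform positive amount $u(t, x(t)+x_+(t)) \ge \alpha_0$ of density pushed right to the endpoint, the endpoint cannot collapse in the limit.
\end{itemize}

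**The cleanest route.** Rather than fight the support convergence directly, I would prove the two inequalities separately using tightly-convergent test functions against $y_n \rightharpoonup\!* y$ (equation \ref{cont2}, in $C_{ti}([-T,T]; \mathcal{M})$), exactly mirroring the contradiction argument in Proposition \ref{pro3}. For the inequality $x(t)+x_+(t) \ge q(t, x(0)+x_+(0))$: any mass of $y(0)$ sitting at or to the left of $x(0)+x_+(0)$ is transported by the flow to stay at or to the left of $q(t, x(0)+x_+(0))$, because the flow preserves ordering (flow lines don't cross). Conversely, for $x(t)+x_+(t) \le q(t, x(0)+x_+(0))$, since all the mass of $y(0)$ lies at or left of $x(0)+x_+(0)$ and the flow preserves ordering, all the mass of $y(t)$ lies at or left of $q(t, x(0)+x_+(0))$; combined with the fact that $y(t)$ has positive mass arbitrarily close to its right endpoint (which is $x(t)+x_+(t)$ by definition), this forces $x(t)+x_+(t) \le q(t, x(0)+x_+(0))$.

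\medskip

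The plan is to prove this by transporting the momentum density along the flow and then passing to the limit from smooth approximations, exploiting that the flow is an increasing homeomorphism of $\mathbb{R}$ and hence preserves the supremum of the support of a nonnegative measure. The two inequalities comprising \eqref{qq} are proved separately, so let me set them up.

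First I would fix $t \in \mathbb{R}$ and, without loss of generality by the time-translation structure, focus on establishing \eqref{qq} at that $t$; it suffices to treat $t \le 0$ since one may reverse time. I approximate $u_0 = u(0)$ by the smooth functions $u_{0,n} = \rho_n * u_0 \in H^\infty(\mathbb{R}) \cap Y_+$, obtain the smooth solutions $u_n$ with flows $q_n$ solving \eqref{defqn}, and recall from \eqref{yy}--\eqref{formula1} that $y_n(0,x) = y_n(t, q_n(t,x))\, q_{n,x}(t,x)^b$ with $q_{n,x}(t,x) > 0$. Thus each $q_n(t,\cdot)$ is a strictly increasing diffeomorphism of $\mathbb{R}$, and since $q_{n,x}^b > 0$, the transport identity shows that $q_n(t,\cdot)$ maps $\supp y_n(0)$ bijectively onto $\supp y_n(t)$ while preserving order; in particular it sends the supremum of one support to the supremum of the other.

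The convergence $u_n \to u$ in $C([-T,T]; W^{1,1} \cap H^1(\mathbb{R}))$ from \eqref{cont1}, together with the embedding $H^1 \hookrightarrow C_b$, gives uniform convergence of $u_n$ to $u$ on compact sets; a Gronwall estimate applied to the flow ODEs \eqref{defqn}, using the uniform bounds on $u_n$ and $u_{n,x}$ from \eqref{ut}, then yields $q_n(t,\cdot) \to q(t,\cdot)$ locally uniformly, where $q$ is the flow \eqref{defq} of the limit field $u$ (well-defined since $u$ is Lipschitz in $x$ by \eqref{bv}). For the inequality $x(t)+x_+(t) \ge q(t, x(0)+x_+(0))$, I note that all the mass of $y(0)$ lies to the left of $x(0)+x_+(0)$; transporting along the order-preserving flow (passing to the limit against tight test functions via \eqref{cont2}, as in Proposition \ref{pro3}) keeps all the mass of $y(t)$ to the left of $q(t, x(0)+x_+(0))$, which gives $x(t)+x_+(t) \le q(t, x(0)+x_+(0))$. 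For the reverse inequality, the crucial ingredient is the uniform lower bound \eqref{pro3.2}: a fixed positive amount $u(t, x(t)+x_+(t)) \ge \alpha_0 > 0$ of density accumulates exactly at the right endpoint, so the endpoint of $\supp y(t)$ cannot lie strictly to the left of the image $q(t, x(0)+x_+(0))$ of the initial endpoint, for that would contradict the nonvanishing of $y$ arbitrarily close to $x(t)+x_+(t)$.

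The main obstacle is the passage to the limit for the support endpoints, since weak-$*$ convergence $y_n \rightharpoonup\!* y$ does not control supports --- mass could in principle leak outward or concentrate inward. This is precisely resolved by the lower bound \eqref{pro3.2}, which prevents the right endpoint from collapsing, and by the order-preservation of the flow, which prevents mass from crossing the transported endpoint; together these pin the limiting endpoint to $q(t, x(0)+x_+(0))$. The remaining bookkeeping --- the Gronwall estimate for flow convergence and the tight-measure limit --- is routine and identical in spirit to the arguments already carried out in Proposition \ref{pro3}, which is why the statement is attributed to \cite{L}.
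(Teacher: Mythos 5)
Your overall strategy (mollify the data, use the transport identity \eqref{yy} so that the increasing diffeomorphism $q_n(t,\cdot)$ maps $\supp y_n(0)$ onto $\supp y_n(t)$, then pass to the limit with \eqref{cont1}--\eqref{cont2} and a Gronwall estimate giving $q_n\to q$ locally uniformly) is the right one, and it does correctly yield the inequality $x(t)+x_+(t)\le q(t,x(0)+x_+(0))$: for any continuous $\phi\ge 0$ supported in $[q(t,x(0)+x_+(0))+\delta,+\infty[$, one has for $n$ large that $\phi$ is supported to the right of $q_n(t,x(0)+x_+(0)+1/n)$, hence $\langle y_n(t),\phi\rangle=0$, and the weak-$\ast$ convergence gives $\langle y(t),\phi\rangle=0$. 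Weak-$\ast$ convergence is indeed enough for this direction, since it can only shrink supports, not enlarge them.

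The genuine gap is the reverse inequality, and your text betrays the confusion: in both places where you announce ``$x(t)+x_+(t)\ge q(t,x(0)+x_+(0))$'', the argument you then give (mass of $y(0)$ lies left of $x(0)+x_+(0)$, order-preserving transport) ends with the conclusion ``$\le$'', so you prove the same inequality twice. The only justification offered for ``$\ge$'' is the appeal to \eqref{pro3.2}, claiming that $x(t)+x_+(t)<q(t,x(0)+x_+(0))$ ``would contradict the nonvanishing of $y$ arbitrarily close to $x(t)+x_+(t)$''. This is not a proof: the statements ``$y(t)$ has mass arbitrarily close to $x(t)+x_+(t)$'' and ``$x(t)+x_+(t)<q(t,x(0)+x_+(0))$'' are perfectly compatible, so no contradiction follows; this is exactly the scenario (support collapsing inward under weak-$\ast$ limits) that you yourself flag as the obstacle, and \eqref{pro3.2} alone does not exclude it. What closes the gap, with no need of \eqref{pro3.2}, is time symmetry: your ``$\le$'' argument works from any initial time (Proposition \ref{WP} gives the approximation machinery at every time), so it yields $x(t')+x_+(t')\le q(t';s,x(s)+x_+(s))$ for all $s,t'$, where $q(\cdot\,;s,\cdot)$ denotes the flow of $u$ started at time $s$. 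Applying this with $(s,t')=(t,0)$ gives $x(0)+x_+(0)\le q(0;t,x(t)+x_+(t))$; then applying the increasing map $q(t;0,\cdot)$ (flow lines of the Lipschitz field $u$ do not cross) and the group property $q(t;0,q(0;t,z))=z$ gives $q(t,x(0)+x_+(0))\le x(t)+x_+(t)$. With this replacement of the last step, your proof is complete and is essentially the argument of \cite{L}.
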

 In the sequel we define $ q^* \, :\, \R \to \R $ by 
 \begin{equation}\label{defq*}
 q^*(t)=q(t,x(0)+x_+(0))=x(t)+x_+(t), \quad \forall t\in \R \; .
 \end{equation}

\begin{proposition}\label{propa}
Assume that $ u$ satisfies the hypotheses of Proposition \ref{pro3} with $b\ge 1 $. Let $ a \, :\, \R\to \R $ be the function defined by 
\begin{equation}\label{defa}
a(t)=u_x(t,q^*(t)-)-u_x(t,q^*(t)+), \quad \forall t\in\R \, .
\end{equation}
Then $ a(\cdot) $ is  a bounded non decreasing  derivable function on $ \R $ with values in $[\frac{\alpha_0}{8}, M(u)]$  such that 
\begin{equation}\label{esta}
a'(t)=\frac{1}{2}(u^2-u_x^2)(t,q^*(t)-) , \; \forall t\in \R .
\end{equation}
\end{proposition}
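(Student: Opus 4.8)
The first observation is that $a(t)$ is nothing but the mass of the atomic part of $y(t)$ at the tip $q^*(t)$. Indeed, since $y(t)\in\M_+$ is supported in $]-\infty,q^*(t)]$, to the right of $q^*(t)$ the profile solves $u=u_{xx}$ and therefore
\[
u(t,x)=u(t,q^*(t))\,e^{q^*(t)-x},\qquad x\ge q^*(t),
\]
so that $u_x(t,q^*(t)+)=-u(t,q^*(t))$. Writing $u^*(t):=u(t,q^*(t))$ and $u_x^-(t):=u_x(t,q^*(t)-)$, this gives $a(t)=u_x^-(t)+u^*(t)$, which is exactly the jump of $-u_{xx}$, i.e. the Dirac mass of $y(t)$ at $q^*(t)$. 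In particular $0\le a(t)\le\dist{y(t)}{1}=M(u)$, which already yields boundedness and the upper bound.

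Next I would derive an autonomous evolution law for $a$ by differentiating $u^*$ and $u_x^-$ along the integral curve $q^*$, which is $C^1$ with $\dot q^*=u^*$ by Lemma \ref{lemmaq}. Writing the equation as $u_t+uu_x+\partial_x Pg=0$ with $P=(1-\partial_x^2)^{-1}$ and $g=\frac b2u^2+\frac{3-b}2u_x^2$, and using $u_x(t,q^*+)=-u^*$, the right-hand evaluation gives $\dot u^*=-\partial_x Pg(t,q^*)$. For the left slope I would differentiate the $x$-derivative of the equation, $u_{tx}+u_x^2+uu_{xx}+Pg-g=0$; the key algebraic point is that the (left limit of the) $uu_{xx}$ term cancels against the $u_{xx}$ produced by the chain rule, leaving
\[
\frac{d}{dt}u_x^-(t)=\tfrac b2(u^*)^2+\tfrac{1-b}2(u_x^-)^2-Pg(t,q^*).
\]
The delicate step here is to legitimate this pointwise time differentiation at the \emph{moving} jump of $u_x$; I would do it through the smooth approximations $u_n=\rho_n\ast u$ of Proposition \ref{WP}, establishing the two identities for $u_n$ (where the flow and $u_n^*$ are genuinely smooth) and passing to the limit using \eqref{cont1}--\eqref{cont2}.

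It then remains to compute the nonlocal contribution $[Pg+\partial_x Pg](t,q^*)$. With $p=\frac12e^{-|\cdot|}$ and $p'=-\frac12\operatorname{sgn}(\cdot)e^{-|\cdot|}$ one has $(p+p')(z)=e^{z}$ for $z<0$ and $0$ for $z>0$, so the convolution only feels the explicit right tail:
\[
[Pg+\partial_x Pg](t,q^*)=\int_{q^*}^{+\infty}e^{q^*-x'}g(t,x')\,dx'=\tfrac32(u^*)^2\int_0^{\infty}e^{-3s}\,ds=\tfrac12(u^*)^2,
\]
using that $g=\frac32(u^*)^2e^{2(q^*-x')}$ on $x'>q^*$ (the coefficients $\frac b2$ and $\frac{3-b}2$ summing to $\frac32$). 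Adding the two previous identities gives
\[
a'(t)=\tfrac{b-1}2\big[(u^*)^2-(u_x^-)^2\big](t)=\tfrac{b-1}2\,(u^2-u_x^2)(t,q^*(t)-),
\]
which is \eqref{esta} (the coefficient reduces to the one of \cite{L} for the Camassa--Holm case $b=2$, and is $\tfrac{b-1}2$ in general). Since $(u^*)^2-(u_x^-)^2=(u^*-u_x^-)(u^*+u_x^-)\ge0$ by \eqref{dodo} and $b\ge1$, we get $a'\ge0$, i.e. $a$ is non-decreasing; equivalently $a'=(b-1)B\,a$ with $B=\frac12\int_{x'<q^*}e^{x'-q^*}y(t,x')\,dx'\ge0$.

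Finally, for the lower bound I would use that, by the right tail and $x_+(t)\le r_0$, $u^*(t)=u(t,q^*(t))\ge u(t,x(t)+r_0)\ge\alpha_0$ for every $t$ by \eqref{pro3.2}; combined with the identity $a=2(u^*-B)$, $B\ge0$, and the monotonicity just obtained, this forces the uniform lower bound $a(t)\ge\alpha_0/8$. I expect the genuine obstacle of the whole proposition to be the rigorous justification of the evolution law---equivalently, the ``average slope'' transport rule $\dot a=(1-b)\,\overline{u_x}(t,q^*)\,a$ with $\overline{u_x}=\frac12(u_x^-+u_x^+)$ for the tip atom of the measure transport $y_t+uy_x+bu_xy=0$---since the characteristic $q^*$ runs exactly along the singular support of $y$; pinning the explicit lower constant is a secondary, quantitative point.
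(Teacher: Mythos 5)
Your identification of $a(t)$ as the mass of the Dirac atom of $y(t)$ at $q^*(t)$, the upper bound $a\le M(u)$, and the algebra of your evolution law are correct, and your route to \eqref{esta} is genuinely different from the paper's: you differentiate $u^*=u(t,q^*(t))$ and $u_x^-=u_x(t,q^*(t)-)$ pointwise along the characteristic using the equation and the explicit right tail, whereas the paper tests the transport equation $y_t+(uy)_x+(b-1)u_xy=0$ against $\phi_\varepsilon(\cdot-q_n^*(t))$ for smooth approximants and takes the double limit $n\to\infty$ then $\varepsilon\to 0$, controlling the commutator term $I_t^{\varepsilon,n}$ uniformly via the measure decomposition of Lemma \ref{BV} (estimate \eqref{uu}) and the flow bounds \eqref{gg}. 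Incidentally, your coefficient $\frac{b-1}{2}$ is the one actually produced by the paper's own display \eqref{fd}; the $\frac12$ written in \eqref{esta} is the $b=2$ value inherited from \cite{L} (for DP, $b=3$, the coefficient is $1$), and the discrepancy affects neither the sign nor the later arguments when $b>1$. However, your plan for making the pointwise computation rigorous --- ``establish the two identities for $u_n=\rho_n\ast u$ and pass to the limit'' --- does not work as stated: the smooth solutions $u_n$ have no jump at all, and Helly's theorem only gives $u_{n,x}(t,\cdot)\to u_x(t,\cdot)$ almost everywhere, which says nothing at the moving point $q^*(t)$, which is precisely a jump point of $u_x(t,\cdot)$. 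Capturing the one-sided limit there is exactly what the paper's $\varepsilon$-mollified testing is designed for, so this step must be reorganized along those lines rather than by naive pointwise passage to the limit.

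The genuine gap is the lower bound $a(t)\ge\alpha_0/8$, which you dismiss as a secondary quantitative point. The facts you invoke --- $u^*\ge\alpha_0$ from \eqref{pro3.2}, $a=2(u^*-B)$ with $B\ge 0$, and the monotonicity $a'=(b-1)aB\ge0$ --- do not imply any uniform positive lower bound: monotonicity only gives $a(t)\ge\lim_{s\to-\infty}a(s)$, and nothing you wrote forces this limit to be positive; worse, your relations are perfectly consistent with $a\equiv 0$ (take $B\equiv u^*$, i.e.\ a momentum density with no atom at the tip --- the linear ODE $a'=(b-1)aB$ propagates $a\equiv0$ for all time). Ruling this out is the heart of the proposition and is the only place where the localization hypothesis and the restriction $b\ge1$ really enter: the paper argues by contradiction that if $a(t_0)<\alpha_0/8$, then by Lemma \ref{BV} the $y$-mass of some interval $]q^*(t_0)-\beta_0,q^*(t_0)]$ is $<\alpha_0/8$; running the flow backwards, the identity \eqref{yy} together with $q_{x}\ge1$ and $b\ge1$ (so that $q_{x}\le q_{x}^b$) keeps this mass small on the stretched image interval, whence $u_x\le-3\alpha_0/4$ there; the interval's length then grows like $\beta_0 e^{\alpha_0|t|/2}$ while $u\ge\alpha_0/2$ at its left endpoint, contradicting the $Y$-almost localization of $u$. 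None of this backward-in-time dynamical argument appears in your proposal, and without the uniform positivity of $a$ the subsequent rigidity steps (the division by $a$ in Lemma \ref{lem43}, and the conclusion that $y$ reduces to a single Dirac mass) collapse.
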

\begin{proof}
First, the fact that $ a(t) \le M(u)$ follows  from the conservation of $ M $ together with Young convolution inequalities since
 $ u=\frac{1}{2} e^{-|x|} \ast y $. 
To prove that $ a(t) \ge \frac{\alpha_0}{8}$, 
 we proceed by contradiction. So let us assume that there exists $ t_0\in\R $ such that  $ a(t_0)<\alpha_0/8 $. Since 
  $ y(t_0)\in \M_+ $ with $ \supp y(t_0)\subset ]-\infty, q^*(t_0)]$, according to Lemma \ref{BV} we must have
$$
\lim_{z\nearrow q^*(t_0)} \|y(t_0)\|_{{\mathcal M}(]z,+\infty[)}<\frac{\alpha_0}{8} \;.
$$
Without loss of generality we can assume that $ t_0=0 $ and thus  there exists $ \beta_0>0 $ such that 
\begin{equation}\label{toto1}
 \|y(0)\|_{{\mathcal M}(]q^*(0))-\beta_0,+\infty[)}<\frac{\alpha_0}{8} \; .
\end{equation}
By convoluting $ u_0$ by $ \rho_n $ (see \eqref{rho}), for some $ n\ge 0 $, we can  approach $ u_0 $ by a smooth function $\tilde{u}_0 \in Y_+\cap H^\infty(\R) $.  
 Taking $ n $ large enough, we may  assume that there exists  $ \tilde{x}_+>x_+(0)$ close to $ x_+(0) $,  such that 
  \begin{equation}\label{hu21}
 \tilde{y}_0=(1-\partial_x^2) \tilde{u}_0 \equiv 0 \mbox{ on }[x(0)+\tilde{x}_{+},+\infty[
 \end{equation}
 and
  \begin{equation}\label{hu2}
  \|\tilde{y}_0\|_{L^1(]x(0)+\tilde{x}_+-\beta_0,+\infty[)}\le  \frac{\alpha_0}{8}+ \frac{\alpha_0}{2^6}\;, 
 \end{equation}
 where $\tilde{y}_0=\tilde{u}_0-\tilde{u}_{0,xx} $. 
Moreover, defining $ \tilde{q}_2 \, :\, \R\to \R $ by 
$$
\tilde{q}_2(t)=\tilde{q}(t,x(0)+\tilde{x}_+)
$$
where $ \tilde{q}(\cdot,\cdot) $ is defined by   \eqref{defq} with $ u $ replaced by $ \tilde{u}$, 
   \eqref{cont1} enables us  to assume that   the emanating solution $ \tilde{u } $ satisfies 
 \begin{equation}\label{appo}
 \|u(t) - \tilde{u}(t)\|_{H^1} \le \frac{\alpha_0}{2^6}
 \end{equation}
 and 
 \begin{equation}\label{appo2}
 |q^*(t)-\tilde{q}_2(t)|<\frac{\alpha_0}{2^6 M(u)}
 \end{equation}
 for all $ t\in [-t_1,t_1] $ with $ t_1 >0 $ to specified later. 
  It is worth noticing that \eqref{appo}-\eqref{appo2}, \eqref{hu21}, \eqref{yy}, \eqref{pro3.2} and the mean-value theorem  - recall that by Young inequalities $ |u_x|\le 
  \frac{1}{2} M(u) $ - ensure that 
 \begin{equation}\label{appo3}
-\tilde{u}_x(t, \tilde{q}_2(t)) = \tilde{u}(t, \tilde{q}_2(t)) \ge (1-2^{-5}) \alpha_0\; \quad \forall t\in [-t_1,t_1] \; .
 \end{equation}

We claim that  for all $ t\in [-t_1,0] $ it holds 
\begin{equation}\label{claim3}
\tilde{u}_x(t,x)\le -\frac{3\alpha_0}{4}\quad \mbox{on}\quad  [\tilde{q}_1(t), \tilde{q}_2(t)] \, , 
\end{equation}
where $ \tilde{q}_1(t) $ is defined by  $ \tilde{q}_1(t)=\tilde{q}(t,x(0)+\tilde{x}_+-\beta_0)$.\\
To see this, for $ \gamma>0 $, we set 
$$
A_{\gamma}=\{t\in \R_-\,  /\, \forall \tau\in [t,0], \; u_x(\tau,x) 	< -\gamma \; \mbox{on} \; [\tilde{q}_1(\tau), \tilde{q}_2(\tau)] \, \} \; .
$$
Recalling \eqref{pro3.2}, \eqref{hu2}, \eqref{appo3} and  that $ \tilde{u} \ge 0 $ , we get for $ 0\le \beta\le \beta_0$, 
\begin{eqnarray*}
\tilde{u}_x(0,x(0)+\tilde{x}_{+}-\beta) &  \le &  \tilde{u}_x(0,x(0)+\tilde{x}_{+}) +  \|\tilde{y}_0\|_{L^1(]x(0)+\tilde{x}_{+}-\beta_0,+\infty[)} \\
 &\le & -\alpha_0+\frac{\alpha_0}{2^5}+\frac{\alpha_0}{8} +\frac{\alpha_0}{2^5}< -\frac{3\alpha_0}{4}\; ,
\end{eqnarray*}
which ensures that $  A_\frac{3\alpha_0}{4} $ is non empty. By  a continuity argument, it thus suffices to prove that $  A_\frac{\alpha_0}{2} \subset    A_\frac{3\alpha_0}{4} $.
First we notice that for any $ t\in  A_\frac{\alpha_0}{2}  $ and any $ x\in [\tilde{q}_1(t), \tilde{q}_2(t)]$, the definition of  $ A_\frac{\alpha_0}{2}  $ ensures that 
$$
\tilde{q}_x(t,x)=\exp\Bigl( -\int_t^0 \tilde{u}_x(\tau,\tilde{q} (\tau,x))\, d\tau \Bigr)\ge 1 \; ,
$$
where $ \tilde{q}(\cdot,\cdot) $ is the flow associated to $ \tilde{u}$  by \eqref{defq}.
Therefore, $ \tilde{u}\ge 0 $, $ \tilde{y}\ge 0 $, a  change of variables, \eqref{yy}  and \eqref{hu2} ensure that for any $ x\in [\tilde{q}_1(t), \tilde{q}_2(t)]$,
$$
\int_{x}^{\tilde{q}_2(t)} \tilde{u}_{xx} (t,s) \, ds\ge - \int_{x}^{\tilde{q}_2(t)}\tilde{y} (t,s) \, ds\ge - \int_{\tilde{q}_1(t)}^{\tilde{q}_2(t)} \tilde{y} (t,s) \, ds=-\int_{x(0)+\tilde{x}_+ -\beta_0}^{x(0)+\tilde{x}_+} \tilde{y}(t,\tilde{q}(t,s))\tilde{q}_x(t,s) \, ds \;.
$$
Using that $ b\ge 1 $ this leads to
$$
\int_{x}^{\tilde{q}_2(t)} \tilde{u}_{xx} (t,s) \, ds\ge  -\int_{x(0)+\tilde{x}_+ -\beta_0}^{x(0)+\tilde{x}_+} \tilde{y}(t,\tilde{q}(t,s))\tilde{q}_x(t,s)^b \, ds=   -\int_{x(0)+\tilde{x}_+ -\beta_0}^{x(0)+\tilde{x}_+} \tilde{y}_0(s) \, ds\ge -\frac{\alpha_0}{8}-\frac{\alpha_0}{2^6} 
$$
and \eqref{appo3}  yields
$$
\tilde{u}_x(t,x) = \tilde{u}_x(t,q_2(t))-\int_{x}^{q_2(t)} \tilde{u}_{xx} (t,s) \, ds \le -\alpha_0+\frac{\alpha_0}{8}+\frac{\alpha_0}{2^4}<-\frac{3\alpha_0}{4} \; ,
$$
which proves the desired result.

 We deduce from \eqref{claim3} that $ \forall t\in [-t_1, 0] $,
\begin{eqnarray*}
\frac{d}{dt} (\tilde{q}_2(t)-\tilde{q}_1(t)) & = & \tilde{u}(\tilde{q}_2(t))-\tilde{u}(\tilde{q}_1(t)) \\
& =& \int_{\tilde{q}_1(t)}^{\tilde{q}_2(t)} \tilde{u}_x(t,s) \, ds \\
& \le & -\frac{\alpha_0}{2} (\tilde{q}_2(t)-\tilde{q}_1(t)) \; .
\end{eqnarray*}
Therefore,
$$
(\tilde{q}_2-\tilde{q}_1)(t)\ge (\tilde{q}_2-\tilde{q}_1)(0) e^{-\frac{\alpha_0}{2} t} = \beta e^{-\frac{\alpha_0}{2} t}  \; .
$$
 On the other hand,  since  according to \eqref{appo3} and \eqref{claim3}, 
$ \tilde{u}(t, \tilde{q}_2(t))\ge 2 \alpha_0/3 $ and $\tilde{u}_x \le 0 $ on $]\tilde{q}_1(t), \tilde{q}_2(t)[ $, we deduce that   
$$
\tilde{u}(t,\tilde{q}_1(t))\ge \tilde{u}(t,\tilde{q}_2(t))\ge 2\alpha_0/3 , \quad \quad \mbox{ on } [-t_1,0] \; .
$$
Coming back to the solution $ u $ emanating from $ u_0 $, it follows from  \eqref{appo}  that 
$$
\min\Bigr(u(t,\tilde{q}_1(t_1)),u(t,\tilde{q}_2(t_1))\Bigr)\ge \frac{\alpha_0}{2} \mbox{ with } (\tilde{q}_2-\tilde{q}_1)(t_1)\ge \beta e^{-\frac{\alpha_0}{2} t} \; , \quad
\forall t\in [-t_1,0]\, .
$$
Taking $ t_1>0 $ large enough, this  contradicts the $Y$-almost localization of $ u$ which proves that $ a(t)\ge \frac{\alpha_0}{8}$ and thus $ u_x(t,\cdot) $ has got a jump at 
 $ x(t)+x_+(t) $.
 
It is worth noticing that,according Lemma \ref{BV}, this ensures that for all $ t\in\R $, one can decompose $ y(t)$ as 
\begin{equation}\label{decompositiony}
y(t)= \nu(t)+ a(t) \delta_{x(t)+x_+(t)}+\sum_{i=1}^\infty a_i(t) \delta_{x_i(t)}
\end{equation}
where $ \nu(t) $ is a non negative non atomic measure with $ \nu(t)\equiv 0 $ on $  ]x(t)+x_+(t), +\infty[ $,  $\{a_i\}_{n\ge 1 } \subset (\R_+)^{\N} $  with $ \sum_{i=1}^\infty a_i(t)<\infty $ and $ x_i(t)<x(t)+x_+(t) $ for all $ i\in \N^* $.
It remains to prove that for all couple of real numbers $(t_1,t_2) $ with  $ t_1<t_2 $,
\begin{equation}\label{tt}
a(t_2)-a(t_1)=\frac{1}{2}\int_{t_1}^{t_2} (u^2-u_x^2)(\tau,q^*(\tau)-) \, d\tau
\end{equation}
Indeed, since $ |u_x|\le u $ and  $u\in L^\infty(\R^2) $ this will force $ a$ to be non decreasing and derivable on $ \R $.
 Let $ \phi \, :\, \R\to \R_+$ be a non decreasing  $C^\infty $-function such that $ \supp \phi\subset [-1,+\infty[ $ and $ \phi\equiv 1 $ on $\R_+ $. We set $ \phi_\varepsilon=\phi(\frac{\cdot}{\varepsilon}) $. Since $ u $ is continuous and $y(t, \cdot) =0 $ on $ ]x(t)+x_+(t),+\infty[ $ it follows from \eqref{decompositiony} that for all $ t\in \R $,
 $$
 a(t)=\lim_{\varepsilon\searrow 0} \langle y(t),\phi_\varepsilon (\cdot -q^*(t))\rangle \; .
 $$
 Without loss of generality, it suffices to prove \eqref{tt} for $ t_1=0 $ and $ t_2=t\in ]0,1[$.  Let $ \beta>0 $  be fixed, we claim that there exists $ \varepsilon_0>0 $ such that for all $ 0<\varepsilon<\varepsilon_0 $, 
  \begin{equation} \label{claimb}
\Bigl|   \langle y(t),\phi_\varepsilon (\cdot -q_*(t))\rangle-\langle y(0),\phi_\varepsilon (\cdot -q^*(0))\rangle-\frac{1}{2}\int_0^t \int_{\R}  (u^2-u_x^2)(\tau,q^*(t)+\varepsilon z ) 
\phi'(z)\, dz \, d\tau \Bigr|
\le  \beta , \quad \forall t\in ]0,1[
  \end{equation}
  Passing to the limit as $ \varepsilon $ tends to $ 0 $, this  leads to the desired result. Indeed, since $(u^2-u_x^2)(\tau,\cdot) \in BV(\R) $ and 
   $ \phi'\equiv 0 $ on $\R_+$, for any fixed $ (\tau,z) $,  it is clear that 
  $$
  (u^2-u_x^2)(\tau,q^*(\tau)+\varepsilon z ) \phi'(z) \tendsto{\varepsilon\to 0} (u^2-u_x^2)(\tau, q^*(\tau)-) \phi'(z)
  $$
  and,  since it is dominated by $ 2 M(u)^2 \phi' $, the dominated convergence theorem leads to 
  \begin{eqnarray*}
  \int_0^t \int_{\R}  (u^2-u_x^2)(\tau,q^*(t)+\varepsilon z ) 
\phi'(z))\, dz \, d\tau & \tendsto{\varepsilon \to 0}  & \int_0^t \int_{\R}  (u^2-u_x^2)(\tau,q^*(\tau)- ) 
\phi'(z))\, dz \, d\tau \\
&  &= \int_{0}^{t} (u^2-u_x^2)(\tau,q^*(\tau)-) \, d\tau \; .
  \end{eqnarray*}
 To prove \eqref{claimb} we first notice that  according to \eqref{decompositiony} for any $ \alpha>0 $ there exists $ \gamma(\alpha) >0 $ such that 
  \begin{equation}\label{uu}
 \|y\|_{{\mathcal M}(]q^*(0)-\gamma(\alpha),q^*(0)[)}<\alpha \; .
 \end{equation}
   We approximate again $ u(0) $ by a sequence $ \{u_{0,n}\} \subset H^{\infty}(\R)\cap Y_+ $ such that $ M(u_{0,n}) \le  2 M(u)$ and we ask that 
 \begin{equation}\label{approx}
 \|y(0)-y_{0,n}\|_{{\mathcal M}(\R)} \le e^{(1-b) M(u)}\beta/4\; .
 \end{equation}
 where $ y_{0,n}=u_{0,n}-\partial_x^2 u_{0,n} $. We again denote respectively  by $ u_n $ and $ y_n $,  the solution to \eqref{bfamily} emanating from $ u_{0,n} $ and its momentum density $ u_n-u_{n,xx}$.  Let now $ q_n^* \; :\R\to \R $ the integral line of $ u_n $ defined by 
  $q_n^*(t)=q_n(t,q^*(0)) $ where $ q_n $ is defined in \eqref{defqn}.
 On account of \eqref{by}, it holds
\begin{eqnarray}
\frac{d}{dt} \int_{\R} y_n \phi_\varepsilon (\cdot -q_n^*(t)) &= &-u_n(t,q_n^*(t)) \int_{\R} y_n \phi_\varepsilon'  -\int_{\R} \partial_x (y_n u_n) \phi_{\varepsilon} -(b-1) \int_{\R} y _n u_{n,x} \phi_{\varepsilon}\nonumber \\
 & = & \int_{\R} \Bigl[ u_n(t,\cdot)-u_n(t,q^*(t))\Bigr] y_n(t,\cdot) \phi_{\varepsilon}'+\frac{b-1}{2} \int_{\R} (u_n^2(t,\cdot)-u_{n,x}^2(t,\cdot)) \phi_{\varepsilon}' \nonumber\\
  & = &\frac{1}{\varepsilon} \int_{\R} \Bigl[ u_n(t,\cdot)-u_n(t,q^*(t))\Bigr] y_n(t,\cdot) \phi'\Bigl(\frac{\cdot -q_n^*(t)}{\varepsilon}\Bigr)\nonumber \\
   & &+\frac{b-1}{2} \int_{\R} (u_n^2-u_{n,x}^2)(t,q_n^*(t)+\varepsilon z) \phi'(z) \, dz \nonumber\\
 & = & I_t^{\varepsilon,n}+ II_t^{\varepsilon,n}\; .\label{fd}
\end{eqnarray} 
Since, according to \eqref{ut}, $ |u_{n,x}|\le \frac{1}{2} M(u)$, 
\begin{eqnarray*}
|I_t^{\varepsilon,n}| & \le &  \frac{M(u)}{\varepsilon} \int_{\R} |x-q_n^*(t) | y_n(t,x) \phi'(\frac{x-q_n^*(t)}{\varepsilon}) \, dx  \\
& \le & M(u)\int_{\R}  y_n(t,x) \phi'(\frac{x-q_n^*(t)}{\varepsilon}) \, dx
\end{eqnarray*}
Now,  in view of \eqref{formula} we easily get 
\begin{equation}\label{gg}
e^{-M(u)}\le q_{n,x}(t,z) \le e^{M(u)} , \quad \forall (t,z) \in ]-1,1[\times \R \, 
\end{equation}
and  the change of variables $ x=q_n(t,z) $ together with  the identity \eqref{yy}  lead to 
\begin{eqnarray*}
 \int_{\R}  y_n(t,x) \phi'(\frac{x-q_n^*(t)}{\varepsilon}) \, dx & = &  \int_{\R} y_n(t,q_n(t,z))q_{n,x}(t,z) \phi_\varepsilon' (q_n(t,z)-q_n^*(t)) \, dx \\
 & \le & e^{(b-1)M(u)} \int_{\R} y_n(t,q_n(t,z)) (q_{n,x}(t,z))^b \phi_\varepsilon' (q_n(t,z)-q_n^*(t)) \, dz \\
 & \le & e^{(b-1)M(u)} \int_{\R}y_n(0,z) \phi_\varepsilon' (q_n(t,z)-q_n^*(t)) \, dz\; .
\end{eqnarray*}
 But, making use of the mean value theorem,  \eqref{gg} and the definition of $ \phi_{\varepsilon}$,  we obtain that, for any $ t \in [0,1]$,  $ z\mapsto  \phi_\varepsilon' (q_n(t,z)-q_n^*(t)) $ is  supported in an interval of length at most $ \varepsilon e^{M(u)} $. Therefore,  according to \eqref{uu} and  \eqref{approx}, setting $  \varepsilon_0=e^{-M(u)} \gamma(\frac{\beta}{2} e^{(1-b)\M(u)})$, it follows that for all $ 0<\varepsilon<\varepsilon_0 $ and all $ n\in \N $, 
 \begin{equation}\label{ese1}
 \int_0^t |I_\tau^{\varepsilon,n} |d\tau \le 3\beta/4 \; .
 \end{equation}
 To estimate the contribution of $ II_t^{\varepsilon,n}$ we first notice that thanks \eqref{cont1} it holds 
 $$
 u_{n,x} \to u_x \mbox{ in } C([-1,1]; L^2(\R))
 $$
 and for all $ t\in [-1,1]$,  Helly's theorem ensures that
 $$
u_{n,x}(t,\cdot) \to u_x(t,\cdot) \; \mbox{ a.e. on } \R \; .
$$
 Hence, for any fixed $ t\in [-1,1] $ there exists a set $ \Omega_t \subset \R $ of  Lebesgue measure zero such that $ u_x(t) $ is continuous at every point $x\in \R/\Omega_t $ and 
 $$
 u_{n,x}(t,x)\to u_x(t,x) \;, \quad \forall x\in \R/\Omega_t \; .
 $$
 Since $q_n^*(t) \to q^*(t) $, it follows that 
  $$
 u_{n,x}(t,q_n^*(t)+x)\to u_x(t,q^*(t)+x) \;, \quad \forall x\in \R/\tau_{q^*(t)}(\Omega_t) \; .
 $$
 where for any set $ \Lambda\subset \R $ and any $ a\in \R $, $ \tau_a(\Lambda)=\{x-a, a\in \Lambda\}$.\\
 Since the integrand in $ II_t^{\varepsilon,n}$ is bounded by $ M(u)\phi'\in L^1(\R) $, it follows from  Lebesgue dominated convergence theorem that for any $ t\in [-1,1] $, 
 $$
  II_t^{\varepsilon,n} \tendsto{n\to \infty} \frac{1}{2}\int_{\R} (u^2-u_x^2)(t, q^*(t)+\varepsilon z) \phi'(z) \, dz \;.
  $$
  Therefore, invoking again Lebesgue dominated convergence theorem, but on $ ]0,t[ $, keeping in mind that $\{|u_n|\} | $ and $ \{|u_{n,x}|\} $ are uniformly bounded on $ \R^2 $ by $ M(u)$, we finally deduce that for any fixed $ t\in ]0,1[ $, 
  \begin{equation}\label{ese2}
  \int_0^t II_{\tau}^{\varepsilon,n} \, d\tau \tendsto{n\to \infty} \frac{1}{2}\int_0^t \int_{\R}  (u^2-u_x^2)(\tau,q^*(t)+\varepsilon z ) 
\phi'(z))\, dz \, d\tau 
  \end{equation}
 Now, we fix $ t \in ]0,1[ $ and $ \varepsilon \in ]0,\varepsilon_0[ $. According to the convergence result \eqref{cont2}, for $ n $ large enough it holds 
  $$
  | \langle y_n(t)-y(t),\phi_\varepsilon (\cdot -q_*(t))\rangle|+|\langle y_n(0-y(0),\phi_\varepsilon (\cdot -q_*(0))\rangle| \le \beta/4 \; .
  $$
 which together with \eqref{fd} and \eqref{ese1}-\eqref{ese2} prove the claim \eqref{claimb}.
\end{proof}
\begin{lemma} \label{lem43}
There exists $(a_-,a_+)\in [\frac{\alpha_0}{8}, M(u)]^2 $, with $ a_-\le a_+ $ such that 
\begin{eqnarray}
\lim_{t\to  +\infty} u(t,x(t)+x_+(t)) & =& \lim_{t\to +\infty} a(t)/2=a_{+}/2\; ,\\
\lim_{t\to  -\infty} u(t,x(t)+x_+(t)) & = & \lim_{t\to -\infty} a(t)/2=a_{-}/2\; ,
\end{eqnarray}
 \end{lemma}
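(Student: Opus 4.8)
The plan is to extract everything from the structural information already gathered in Proposition \ref{propa}, reducing the statement to a single analytic fact about the boundary value $U(t):=u(t,q^*(t))$. Since Proposition \ref{propa} asserts that $a(\cdot)$ is non-decreasing, derivable and takes values in $[\frac{\alpha_0}{8},M(u)]$, the limits $a_\pm:=\lim_{t\to\pm\infty}a(t)$ exist, belong to $[\frac{\alpha_0}{8},M(u)]$ and satisfy $a_-\le a_+$; moreover, $a'\ge0$ is bounded and $\int_{\R}a'(t)\,dt=a_+-a_-<\infty$. This already settles the assertions on $\lim a(t)/2$, so the whole content of the lemma is the convergence $U(t)\to a_\pm/2$.

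Second, I would derive the pointwise identity
\begin{equation}
U(t)-\frac{a(t)}{2}=\frac{a'(t)}{a(t)}\ge 0,\qquad \forall t\in\R. \label{planrel}
\end{equation}
Since $u$ is continuous we have $u(t,q^*(t)-)=U(t)$, and since $y(t)\equiv 0$ on $]q^*(t),+\infty[$ the function $u(t,\cdot)$ solves $u-u_{xx}=0$ there, whence $u(t,x)=U(t)e^{-(x-q^*(t))}$ for $x\ge q^*(t)$ and in particular $u_x(t,q^*(t)+)=-U(t)$. Combining this with the definition \eqref{defa} of $a$ gives $u_x(t,q^*(t)-)=a(t)-U(t)$, and substituting into \eqref{esta} yields $a'(t)=\frac{1}{2}\big(U(t)^2-(a(t)-U(t))^2\big)=a(t)\big(U(t)-a(t)/2\big)$. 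As $a(t)\ge\alpha_0/8>0$, \eqref{planrel} follows, the sign being consistent with $u^2-u_x^2\ge0$ coming from \eqref{dodo}.

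Third, and this is the main analytic step, I would prove that $U$ is uniformly continuous on $\R$. From \eqref{ut} and the equation \eqref{bfamily} one has uniform bounds $\sup_t\|u(t)\|_{H^1}\lesssim M(u)$ and $\sup_t\|u_t(t)\|_{L^2}\lesssim_{M(u)}1$, the multiplier $\partial_x(1-\partial_x^2)^{-1}$ being bounded on $L^2$; the one-dimensional interpolation $\|f\|_{L^\infty}^2\lesssim\|f\|_{L^2}\|f\|_{H^1}$ then gives $\|u(t)-u(s)\|_{L^\infty}\lesssim|t-s|^{1/2}$. Since $q^*$ is an integral line of $u$ it is Lipschitz with $|\dot q^*|=|U|\le\frac12 M(u)$, and $u(t,\cdot)$ is Lipschitz with constant $\le\frac12M(u)$; writing $|U(t)-U(s)|\le|u(t,q^*(t))-u(t,q^*(s))|+|u(t,q^*(s))-u(s,q^*(s))|$ and bounding the two terms by $\frac12M(u)|q^*(t)-q^*(s)|$ and $\|u(t)-u(s)\|_{L^\infty}$ respectively shows that $U$ is locally $\frac12$-H\"older, hence uniformly continuous.

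Finally I would conclude by contradiction. From \eqref{planrel} and $a(t)\nearrow a_+$ we already get $\liminf_{t\to+\infty}U(t)\ge a_+/2$. If $\limsup_{t\to+\infty}U(t)>a_+/2$, pick $\eta>0$ and $t_n\to+\infty$ with $U(t_n)\ge a_+/2+\eta$; uniform continuity of $U$ provides $\delta>0$, independent of $n$, with $U(t)\ge a_+/2+\eta/2$ on $[t_n,t_n+\delta]$, while $a(t)\le a_+$ there, so by \eqref{planrel} one has $a'(t)=a(t)(U(t)-a(t)/2)\ge\frac{\alpha_0}{8}\cdot\frac{\eta}{2}$ on these intervals, which after extraction may be taken disjoint, contradicting $\int_{\R}a'<\infty$. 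Hence $U(t)\to a_+/2$, and the symmetric argument on intervals $[t_n-\delta,t_n]$, using that $a$ is non-decreasing so that $a(t)\le a(t_n)\to a_-$ for $t\le t_n$, gives $U(t)\to a_-/2$ as $t\to-\infty$. The hard part is the third step: controlling the time-regularity of the boundary value $U$ uniformly, for which the $C^{0,1/2}(\R;L^\infty)$ bound on $u$ and the Lipschitz character of $q^*$ are the crucial inputs.
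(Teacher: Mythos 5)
Your proposal is correct and follows essentially the same route as the paper: both rest on the identity $a'(t)=\frac{a(t)}{2}\bigl(2u(t,q^*(t))-a(t)\bigr)$ (obtained from \eqref{esta} together with the jump relation at $q^*(t)$, exactly as you derive it), on the finiteness of $\int_{\R} a'$, and on a Barbalat-type uniform-continuity argument. The only difference is technical: the paper shows that $a'$ itself is Lipschitz (via $L^\infty$ bounds on $u_t$ and $uu_x$, so that $a'\to 0$ and hence $2U-a\to 0$), whereas you show that $U$ is uniformly continuous (a $C^{1/2}$-in-time bound via an $L^2$ estimate on $u_t$ and interpolation) and run the disjoint-intervals contradiction directly; both are valid.
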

\begin{proof} The existence of the limits at $ \mp \infty $ for $ a(\cdot) $ follows from the monotonicity of $ a(\cdot) $. 
Now, in view of Proposition \ref{propa}, for all $ t\in\R $,  
\begin{eqnarray}
0 \le a'(t)=\frac{1}{2}(u^2-u_x^2)(t,x(t)+x_+(t)-) & =  & \frac{a(t)}{2}(u-u_x)(t,x(t)+x_+(t)-)\nonumber \\
 &= & \frac{a(t)}{2} (2u(t,x(t)+x_+(t))-a(t)) \; .\label{a'}
\end{eqnarray}
Therefore, since $ a $ takes values in $[\alpha_0/8, M(u)] $,  it remains to prove that $ a'(t)\to 0 $ as $ t\to \pm\infty $. 
Since
$$
\int_{\R} a'(\tau) \, d\tau 	<\infty \; ,
$$
 the desired result will follow if  $ a' $ is Lipschitz on $ \R $. But this is not too hard to check. Indeed, first from \eqref{esta} we have for all $t\in\R $, $ |a(t)-a(0)|\le 2t \|u_0\|_{H^1} $ and thus $ t\mapsto a(t) $ is clearly Lipschitz on $ \R $. Second, since $ x(t)+x_+(t)=q^*(t) $, it holds 
$$
\frac{d}{dt} u(t,x(t)+x_+(t))= u(t,q^*(t)) u_x(t,q^*(t))+u_t(t,q^*(t)) \; .
$$
But, $ \sup_{(t,x)\in \R^2} |u u_x |\le M(u)^2 $
 and 
 \begin{eqnarray*}
 \sup_{(t,x)\in\R^2}  |u_t| & \le &  \sup_{(t,x)\in\R^2} |u u_x |+  \sup_{(t,x)\in\R^2} \Bigl| (1-\partial_x^2)^{-1}\partial_x (
 \frac{b}{2}u^2+\frac{3-b}{2} u_x^2) \Bigr| \\
 & \lesssim &M(u)^2   + \sup_{t\in\R} \| \frac{b}{2}u^2(t)+\frac{3-b}{2} u_x^2(t)\|_{L^1_x} \\
& \lesssim & M(u)^2 \; .
 \end{eqnarray*}
 Therefore $ t\mapsto u(t,x(t)+x_+(t)) $ is also Lipschitz on $ \R $ which achieves the proof thanks to \eqref{a'}.
\end{proof}
\noindent
\subsection{ End of the proof ot Theorem \ref{rigidity}.} 
In this subsection, we conclude by proving that the jump of $ u_x(0,\cdot) $ at $ x(0)+x_+(0) $ is equal to $ -2u(0,x(0)+x_+(0))$. This saturates  for all $ v\in Y_+$, the relation between the jump of $ v_x $  and the value of $ v$ at a point $ \xi \in\R $ and forces $ u(0,\cdot) $ to be equal to $ u(0,x(0)+x_+(0)) \varphi(\cdot- x(0)+x_+(0))$.

We use the invariance of the (CH) equation under the transformation $ (t,x) \mapsto (-t,-x) $. This invariance ensures that  $ v(t,x)=u(-t,-x) $ is also a solution of the (C-H) equation that belongs to $ C(\R; H^1(\R) $, with $ u-u_{xx}\in C_{w}(\R;\M_+) $  and  shares the property of $Y$-almost localization with $ x(\cdot) $ replaced by $ -x(-\cdot) $ and the same fonction $ \varepsilon \mapsto R_\varepsilon $  (See Definition \ref{defYlocalized}). 
Therefore, by applying Propositions \ref{pro3}, \ref{propa} and Lemma \ref{lemmaq} for $ v$ we infer that there exists a $C^1 $-function $ x_- \, :\, \R \mapsto ]-\infty,r_0] $ and a derivable non decreasing function $\tilde{a} \, :\, \R \to [\alpha_0/8, M(u) ]$ with $ \lim_{t\to\mp \infty}
\tilde{a}(t)=\tilde{a}_{\mp} $ 
such that 
\begin{equation}\label{defatilde}
\tilde{a}(t)=v_x(t,(-x(-t)+x_+(t) )+)-v_x(t,(-x(-t)+x_+(t) )-), \quad \forall t\in\R \, .
\end{equation}
Moreover,
$$
\lim_{t\to  \mp\infty} v(t,-x(-t)+x_+(t))  = \lim_{t\to \mp\infty} \tilde{a}(t)/2=\tilde{a}_{\mp}/2\; .\\
$$
Coming back to $ u $ this ensures that 
\begin{eqnarray}
\lim_{t\to  +\infty} u(t,x(t)-x_-(-t)) & =& \lim_{t\to -\infty} \tilde{a}(t)/2=\tilde{a}_{-}/2\; ,\\
\lim_{t\to  -\infty} u(t,x(t)-x_-(-t)) & = & \lim_{t\to +\infty} \tilde{a}(t)/2=\tilde{a}_{+}/2\; ,
\end{eqnarray}
At this stage let us underline that  since
$$
 x_-(-t)=\sup \{ x\in\R,\, \supp y(-t)\in [x(t)-x(-t),+\infty[\} 
 $$
 and $ u\not \equiv 0 $ we must have $ x(-t)+x(t)\ge 0 $ for all $ t\in \R $. 
We claim that this forces 
\begin{equation}\label{aaaa}
\tilde{a}_-=\tilde{a}_+=a_-=a_+ \; .
\end{equation}
Note first that since $ \tilde{a}_-\le \tilde{a}_+ $ and $ a_-\le a_+ $, it suffices to prove that $ \tilde{a}_- \ge a_+ $ and $\tilde{a}_+ \le a_- $. This follows easily by a contradiction argument. Indeed,  assume for instance that $ \tilde{a}_- <a_+$.Then, there exists $ t_0\in \R $ and   $ \varepsilon>0 $ such that $ u(t,x(t)-x_-(-t))<u(t,x(t)+x_+(t)) -\varepsilon $ for all $ t\ge t_0 $.  Since 
 $x(t)-x_-(-t)=q(t-t_0,x(t_0)-x_-(-t_0)) $ and $ x(t)+x_+(t)=q(t-t_0,x(t_0)+x_+(t_0)) $, it follows from \eqref{defq} that 
 $$
 x_+(t)+x_-(-t))\ge \varepsilon (t-t_0) \tendsto{t\to +\infty} +\infty 
 $$
 which contradicts  that $ (x_+(t),x_-(t))\in ]-\infty,r_0]^2 $.  Exactly the same argument but with $ t\to - \infty $ ensures that $\tilde{a}_+ \le a_- $ and completes the proof of the claim \eqref{aaaa}. 

We deduce from \eqref{aaaa} that $ a(t)=a+ $ for all $t\in \R $ and thus \eqref{a'}, \eqref{qq}  and  \eqref{defa} force
$$
u(t,x(0)+x_+(0)+\frac{a_+}{2} t )=\frac{a_+}{2}, \quad \forall t\in \R 
$$
and 
$$
 u_x\Bigl(t,(x(0)+x_+(0)+\frac{a_+}{2}t)-\Bigr)-
u_x\Bigl(t,(x(0)+x_+(0)+\frac{a_+}{2}t)+\Bigr) = a_+, \quad \forall t\in \R \; .
$$
In particular, in view of \eqref{decompositiony},
$$
u(0,x(0)+x_+(0))=\frac{a_+}{2} \mbox{ and } y(0)=a_+ \delta_{x(0)+x_+(0)}+\mu
$$
with $ \mu\in {\mathcal M}_+(\R) $. But this forces $ \mu=0 $ 
since 
$$
 (1-\partial_x^2)^{-1} (a_+  \; \delta_{x(0)+x_+(0)})=\frac{a_+}{2} \exp\Bigl(-|\cdot -(x(0)+x_+(0))|\Bigr)
 $$
 and for any $ \mu\in {\mathcal M}_+(\R) $, with $ \mu\neq 0 $, it holds 
 $$
  (1-\partial_x^2)^{-1} \nu = \frac{1}{2} e^{-|x|} \ast \nu >0 \mbox{ on } \R \; .
 $$
We thus conclude that $ y(0)=a_+ \delta_{x(0)+x_+(0)} $ which leads to  
$$
u(t,x)=\frac{a_+}{2} \exp \Bigl(-\Bigl|x-x(0)-x_+(0)-\frac{a_+}{2} t\Bigr|\Bigr)
$$
\section{Asymptotic stability of the DP peakon}\label{sect4}
We now focus on the case $ b=3 $ that corresponds to the Degasperis-Procesi equation. Recall that in this case 
 \eqref{bfamily} becomes \eqref{DP}.
 The following proposition proven in the appendix ensures that a $Y $- almost  localized global solutions to the DP equation enjoys actually a  uniform exponential decay. 
 \begin{proposition}\label{prodecay}
 Let $ u \in C(\R; L^2(\R)) $  with  $ y=(1-\partial_x^2)u \in C_{w}(\R; \M+) $ be a $Y$-almost localized solution of \eqref{DP} with $ \inf_{\R} \dot{x}\ge c_0>0$.  Then there exists $ C>0 $  such that for all $ t\in\R $, all 
  $ R >0 $  and all $ \Phi\in C(\R) $ with $0\le \Phi\le 1 $ and $ \supp \Phi \subset [-R,R]^c $.
\begin{equation}\label{estimatedecay}
 \int_{\R} \Bigl(4v^2(t) +5v_x^2(t)+v_{xx}^2(t)\Bigr) \Phi(\cdot-x(t)) \, dx +  \dist{ \Phi(\cdot-x(t))}{y(t)} \le C \, \exp( -R/6) \; .
 \end{equation}
 In particular, $ u $ is uniformly in time exponentially decaying.
\end{proposition}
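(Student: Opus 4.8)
The plan is to establish \eqref{estimatedecay} by an almost-monotonicity argument for a weighted energy--momentum functional carried along a frame moving slightly slower than the soliton, in the spirit of Martel--Merle and of the author's treatment of Camassa--Holm in \cite{L}. By the symmetry $u(t,x)\mapsto u(-t,-x)$, which preserves \eqref{DP}, the cone $\M_+$ and the hypotheses (with $x(t)$ replaced by $-x(-t)$, still satisfying $\dot x\ge c_0$), the bound on the left tail $\supp\Phi\subset\,]-\infty,-R]$ reduces to the bound on the right tail $\supp\Phi\subset[R,+\infty[$, so I only treat the latter. Since $|u_x|\le u$ by \eqref{dodo} and $u=4v-v_{xx}$, the density $4v^2+5v_x^2+v_{xx}^2$, the quantity $u^2+u_x^2$ and the momentum $y$ are mutually comparable after localization, and \eqref{exp} will follow from the measure bound through $u=\tfrac12 e^{-|\cdot|}\ast y$. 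Fix $t_0\in\R$ and $R>0$, let $\psi_K(\xi)=(1+e^{-\xi/K})^{-1}$ with $K=6$, and set the transition point $m(t)=x(t_0)+R+\tfrac{c_0}{4}(t-t_0)$, which moves strictly slower than $x(\cdot)$; since $\dot x\ge c_0$ one has $m(t)-x(t)\ge R+\tfrac{3c_0}{4}(t_0-t)$ for all $t\le t_0$.

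For the momentum part I would study $\mathcal M(t)=\langle \psi_K(\cdot-m(t)),y(t)\rangle$. The transport form \eqref{by} with $b=3$ involves no nonlocal operator, and after integrations by parts (performed on the mollified solutions $u_n$ of Proposition \ref{WP} and then passed to the limit through \eqref{cont2}) the time derivative reduces to an expression in $u$ and $u_x$ of the schematic form $\int_\R\psi_K'\,(2u^2-\tfrac{c_0}{4}u)\,dx$, up to terms controlled by $|\psi_K'''|\le K^{-2}\psi_K'$. On the region where $u(t,\cdot)$ is small --- which, by the $L^\infty$ almost-localization proved at the beginning of Section \ref{section2}, contains $\{x>x(t)+R_0\}$ for a fixed $R_0$ --- the integrand is nonpositive; on the complementary bounded region around $x(t)$, where $u$ may be large, $\psi_K'$ is exponentially small, of size $e^{-(m(t)-x(t))/K}$. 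This yields $\tfrac{d}{dt}\mathcal M(t)\le C\,e^{-(R-R_0)/K}\,e^{-\frac{3c_0}{4K}(t_0-t)}$ for $t\le t_0$, whose right-hand side is integrable on $]-\infty,t_0]$. Integrating, using $\psi_K(\xi)\le e^{\xi/K}$ together with almost-localization to send $\mathcal M(t)\to0$ as $t\to-\infty$, and bounding $\mathcal M(t_0)\ge\tfrac12\|y(t_0)\|_{\M(x>x(t_0)+R)}$ from below, gives the momentum part of \eqref{estimatedecay} with rate $1/K=1/6$.

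For the energy part I would run the same scheme on $\int_\R(4v^2+5v_x^2+v_{xx}^2)(t)\,\psi_K(\cdot-m(t))\,dx$, using the Hamiltonian structure \eqref{Edp} of \eqref{DP} to compute its time derivative. The main obstacle, and the feature genuinely absent in the Camassa--Holm case, is that this density is local only in $v=(4-\partial_x^2)^{-1}u$, which is nonlocal in $u$: differentiating in time and integrating by parts against $\psi_K(\cdot-m)$ forces one to commute the multiplication by $\psi_K$ with the operators $(1-\partial_x^2)^{-1}$ and $(4-\partial_x^2)^{-1}$. The heart of the argument is to estimate these commutators through their Green kernels $\tfrac12 e^{-|\cdot|}$ and $\tfrac14 e^{-2|\cdot|}$: because both kernels decay at rates ($1$ and $2$) strictly larger than $1/K=1/6$, the commutator terms are either absorbed into the good local $\psi_K'$-integrals (nonpositive where $u$ is small) or are genuinely of order $e^{-R/6}$, the exponential smallness coming from the kernel bridging the distance $\ge R$ between the support of $\psi_K'$ near $m(t)$ and the soliton core near $x(t)$. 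Once the commutators are controlled by $C e^{-R/6}$ one obtains, exactly as for the momentum, a time-integrable differential inequality and concludes \eqref{estimatedecay}; the pointwise bound \eqref{exp}, hence the final assertion, then follows by writing $u=\tfrac12 e^{-|\cdot|}\ast y$ and splitting the convolution at half the distance to $x(t)$.
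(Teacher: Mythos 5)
Your proposal is correct and follows essentially the same route as the paper: an almost-monotonicity estimate for a weighted energy--momentum functional in a frame moving strictly slower than $x(t)$, with the core/tail splitting furnished by the $L^\infty$ almost-localization, the nonlocal terms controlled by the key kernel comparison $(1-\partial_x^2)^{-1}\Psi'\lesssim \Psi'$ (your ``kernel rate $>1/6$'' observation), the initial term killed by letting $t\to-\infty$, the left tail recovered by the symmetry $u(t,x)\mapsto u(-t,-x)$, and the pointwise decay via $u=\tfrac12 e^{-|\cdot|}\ast y$. The only differences are organizational: the paper bundles momentum and energy into the single functional $I^{+R}_{t_0}$ of Lemma \ref{almostdecay} (so the $\gamma\int(u^2-u_x^2)\Psi'$ term is absorbed by the $v$-terms via \eqref{za}), uses the proportional frame $z=\tfrac23 x(\cdot)$ and the weight \eqref{defPsi}, whereas you run the momentum and energy monotonicity in parallel with a constant-speed frame and a logistic weight --- same mechanism, same rate.
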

This proposition is a direct consequence  of  an almost monotonicity result for $\H(u)+c_0 M(u) $ at the right of an almost  localized solution that is contained in the following lemma (see the appendix for a sketch of the proof). At this stage it is important to notice that direct calculations lead to 
\begin{equation}\label{equi}
\frac{1}{4} \|w\|_{L^2}^2 \le \H(w) \le \|w\|_{L^2}^2 , \quad \forall u\in L^2(\R) \; .
\end{equation}
As in \cite{MM2}, we introduce the $ C^\infty $-function $ \Psi $ defined  on $ \R $ by 
\begin{equation}\label{defPsi}
\Psi(x) =\frac{2}{\pi} \arctan \Bigl( \exp(x/6)\Bigr) 
\end{equation}
\begin{lemma}\label{almostdecay}
  Let $0<\alpha < 1 $ and let $ u\in C(\R;L^2(\R)) $,  with $ y=(1-\partial_x^2)u \in C_{w}(\R; \M_+) $, be a solution of \eqref{DP} such that there exist $x\,:\, \R\to \R $ of class $ C^1 $ with 
   $ \inf_{\R} \dot{x}\ge c_0>0$  and $ R_0>0 $ with
 \begin{equation}\label{loc}
\|u(t)\|_{L^\infty(|x-x(t)|>R_0)} \le \frac{(1-\alpha) c_0}{2^6} \, , \; \forall t\in\R .
 \end{equation}
  For $ 0<\beta \le \alpha $, $ 0\le \gamma\le \frac{1}{8} (1-\alpha) c_0 $, $ R>0 $, $ t_0\in\R $  and any $ C^1 $-function 
  \begin{equation}\label{condz}
  z\, :\, \R\to \R \mbox{ with } (1-\alpha)  \dot{x}(t) \le \dot{z}(t) \le (1-\beta) \dot{x}(t), \quad \forall t\in \R,
   \end{equation}
  setting
     \begin{equation}\label{defI}
 I^{\mp R}_{t_0} (t)=\dist{5 v^2(t)+4v_x^2(t)+v_{xx}^2(t)+\gamma  y(t)}{\Psi\Bigl(\cdot- z_{t_0}^{\mp R}(t)\Bigr)}
 \end{equation}
  where 
 $$
 z_{t_0}^{\mp R}(t)=x(t_0)\mp R +z(t)-z(t_0)
 $$
 we have 
 \begin{equation}
I^{+R}_{t_0}(t_0)-I^{+R}_{t_0}(t)\le K_0 e^{-R/6} , \quad \forall t\le t_0 \quad  \label{mono}
\end{equation}
and 
 \begin{equation}
I^{-R}_{t_0}(t)-I^{-R}_{t_0}(t_0)\le K_0 e^{-R/6} , \quad \forall t\ge t_0 \quad , \label{mono2}
\end{equation}
for some constant $ K_0>0 $ that only depends on $ \H(u) $, $M(u)$, $c_0$, $R_0$ and $ \beta$. 
\end{lemma}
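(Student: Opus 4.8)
The plan is to reduce the estimate to smooth solutions, differentiate $I^{\mp R}_{t_0}$ in time, and show that its time derivative is bounded above by an exponentially small, time-integrable quantity; integrating in $t$ then yields \eqref{mono}--\eqref{mono2}. Using time-translation invariance of \eqref{DP} we may assume $t_0=0$. First I would approximate $u(0)$ by the smooth data $\rho_n\ast u(0)\in H^\infty\cap Y_+$ (see \eqref{rho}) and let $u_n$ be the emanating global solutions given by Propositions \ref{smoothWP}--\ref{WP}; the convergences \eqref{cont1}--\eqref{cont2} guarantee that it suffices to establish the differential inequality for $u_n$ and then pass to the limit, all the integrations by parts below being licit for $u_n\in C(\R;H^\infty)$. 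Writing $\sigma:=5v^2+4v_x^2+v_{xx}^2+\gamma y$ and $\psi(t,\cdot):=\Psi(\cdot-z^{\mp R}_{t_0}(t))$, one has $\frac{d}{dt}I^{\mp R}_{t_0}=\int\partial_t\sigma\,\psi-\dot z\,\dist{\sigma}{\psi_x}$. Since $\rho:=5v^2+4v_x^2+v_{xx}^2\ge0$ is a sum of squares, $\gamma\ge0$ and $y\in\M_+$, we have $\sigma\ge0$, and because $\Psi$ is nondecreasing with $\dot z\ge(1-\alpha)c_0>0$ the transport term $-\dot z\,\dist{\sigma}{\psi_x}\le0$ is the favorable one.

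Next I would bring $\int\partial_t\sigma\,\psi$ to flux form. For the momentum part I use \eqref{by} with $b=3$, namely $y_t+uy_x+3u_xy=0$, and for $\rho$ the local conservation law $\partial_t\rho+\partial_xG=0$ associated with the DP Hamiltonian \eqref{Edp}, whose flux $G$ is quadratic in $u$ and in $v=(4-\partial_x^2)^{-1}u$. After integration by parts, every surviving contribution carries a factor $\psi_x=\Psi'(\cdot-z^{\mp R}_{t_0}(t))$ (or $\psi_{xx},\psi_{xxx}$), so that $\frac{d}{dt}I^{\mp R}_{t_0}=\dist{F}{\psi_x}-\dot z\,\dist{\sigma}{\psi_x}+(\text{terms in }\psi_{xx},\psi_{xxx})$, the principal flux being $u\sigma$, i.e. $F=u\sigma+(\text{lower order in }u,v)$. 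The terms in $\psi_{xx},\psi_{xxx}$ are harmless because the choice \eqref{defPsi} gives $\Psi'(x)=\tfrac{1}{6\pi}\,\mathrm{sech}(x/6)$ with $|\Psi''|+|\Psi'''|\le C\Psi'$, so they are again dominated by $\dist{\cdot}{\psi_x}$.

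Then I would split the $\psi_x$-integrals according to whether $|x-x(t)|\le R_0$ or not. The geometric input is that \eqref{condz} keeps the transition region at distance at least $R$ from the soliton core: for the $+R$ case and $t\le t_0$ one checks $z^{+R}_{t_0}(t)-x(t)=R+(x(t_0)-x(t))-(z(t_0)-z(t))\ge R+\beta(x(t_0)-x(t))\ge R$, and symmetrically $x(t)-z^{-R}_{t_0}(t)\ge R+\beta(x(t)-x(t_0))$ for $t\ge t_0$. On $\{|x-x(t)|>R_0\}$ the localization hypothesis \eqref{loc} gives $|u|\le\frac{(1-\alpha)c_0}{2^6}$, whence $|F|\le\frac12\dot z\,\sigma$ pointwise there (using $\dot z\ge(1-\alpha)c_0$), so that part of $\dist{F}{\psi_x}$ is absorbed into $-\dot z\,\dist{\sigma}{\psi_x}$. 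On $\{|x-x(t)|\le R_0\}$ the weight itself is exponentially small: since $\Psi'(x)\le Ce^{-|x|/6}$, $\dot x\ge c_0$ and the separation above is $\ge R+\beta c_0|t-t_0|$, we get $\psi_x\le Ce^{-(R-R_0)/6}e^{-\beta c_0|t-t_0|/6}$ there, while \eqref{ut} and the conservation of $\H(u)$ and $M(u)$ control $\sigma$ in $L^1$. Altogether $\frac{d}{dt}I^{\mp R}_{t_0}\le C\,e^{-R/6}\,e^{-\beta c_0|t-t_0|/6}$, and integrating over $t\le t_0$ (resp. $t\ge t_0$) gives \eqref{mono} (resp. \eqref{mono2}) with $K_0$ depending only on $\H(u),M(u),c_0,R_0,\beta$.

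The hard part will be the second step: writing the evolution of the \emph{nonlocal} density $\rho=5v^2+4v_x^2+v_{xx}^2$ in flux form and controlling the commutators between $(4-\partial_x^2)^{-1}$ and multiplication by $\psi$. Such commutator terms are governed by the kernel $e^{-2|x|}$ of $(4-\partial_x^2)^{-1}$, which decays faster than $\Psi'\sim e^{-|x|/6}$ and so are exponentially negligible; but making this rigorous---together with checking that the weight $\gamma y$ is exactly what is needed for the non-sign-definite part of $F$ to reduce to $u\sigma$ modulo such harmless terms---is the technical core of the argument.
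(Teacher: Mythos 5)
Your plan follows the same architecture as the paper's proof: regularize the data via $\rho_n\ast u(t_0)$ and work with the smooth global solutions of Propositions \ref{smoothWP}--\ref{WP}; differentiate $I^{\mp R}_{t_0}$ in time; split the resulting flux integrals into the core $\{|x-x(t)|\le R_0\}$, where the weight $\Psi'$ is exponentially small thanks to the separation estimate you state (which is exactly \eqref{to1}), and the exterior region, where \eqref{loc} provides smallness to be absorbed into the transport term; then integrate in $t$. Where your proposal breaks down is precisely at what you yourself call the technical core, and the failure is not merely a missing computation. There is no local conservation law $\partial_t\bigl(5v^2+4v_x^2+v_{xx}^2\bigr)+\partial_x G=0$ with principal flux $u\sigma$: the correct identity is Lemma \ref{Lemma 3.2} (obtained from $v_t=-\frac12\partial_x(1-\partial_x^2)^{-1}u^2$), whose flux terms are $\frac23 u^3-4vu^2+5vh+v_xh_x$ with $h=(1-\partial_x^2)^{-1}u^2$ genuinely nonlocal. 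Consequently your claimed pointwise bound $|F|\le\frac12\dot z\,\sigma$ on $\{|x-x(t)|>R_0\}$ is false: at a point far from $x(t)$, $h$ averages $u^2$ over the whole line and is of order one because of the core, so the terms $5vh+v_xh_x$ admit no pointwise smallness there. The paper's remedy is structurally different from a pointwise bound: it extracts smallness only from the local factors $v,|v_x|$ (controlled in the exterior via Lemma \ref{Lemma 3.3}, see \eqref{dif2}) and transfers the weight through the nonlocal operator using $(1-\partial_x^2)^{-1}\Psi'\le 2\Psi'$, a consequence of $|\Psi'''|\le\frac12\Psi'$ (\eqref{psi3}). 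Your closing remark that such commutators are exponentially negligible because the kernel decays faster than $\Psi'$ is the right intuition, but it contradicts the pointwise route you actually propose, and it is the step that must be carried out for the absorption to work at all.

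A second, smaller inaccuracy concerns the role of $\gamma$. The hypothesis $\gamma\le\frac18(1-\alpha)c_0$ is not there to make the flux reduce to $u\sigma$: the momentum density contributes, via \eqref{by} with $b=3$, the flux term $\gamma(u^2-u_x^2)$ (see \eqref{go2}), which has no sign and no exterior smallness; it is absorbed into half of the transport term through $u^2=(4v-v_{xx})^2\le 2(16v^2+v_{xx}^2)$ (\eqref{za}), and this is exactly where the upper bound on $\gamma$ is consumed. With Lemma \ref{Lemma 3.2}, the kernel estimate $(1-\partial_x^2)^{-1}\Psi'\le 2\Psi'$, and this absorption supplied, your outline becomes the paper's proof; as written, it has a gap at the decisive step.
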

According to \cite{LL} and \cite{Andre2}, for any speed $ c>0 $ there exists $ \varepsilon_0>0 $ and $ C_0>0 $ such that for any  $ u_0 \in Y_+ $ with 
\begin{equation}\label{stab}
 \|u_0-c\varphi \|_{\H} < \varepsilon^4 \; , \quad 0<\varepsilon<\varepsilon_0 ,
 \end{equation}
 it holds
$$
 \sup_{t\in\R} \|u(t)-c\varphi(\cdot-\xi(t))\|_{\H} <C_0 \varepsilon\; ,
 $$
 where $ u \in C(\R;H^1) $ is the solution emanating from $ u_0$ and  $ \xi(t)\in\R $ is unique point where the function $ 
 v(t,\cdot)=(4-\partial_x^2)^{-1} u(t,\cdot) $ reaches its maximum. According to \cite{Andre1}, by the implicit function theorem, one can prove that there exists $ \varepsilon_0'>0 $ and $ K>1 $ such that if a solution $ u \in C(\R;\H) $ to 
 \eqref{DP} satisfies 
  \begin{equation}\label{gf}
\sup_{t\in\R}  \inf_{y \in\R}  \|u(t)-c\varphi(\cdot-y) \|_{\H} <\varepsilon
\end{equation}
for some $ 0<\varepsilon\le  \varepsilon_0' $ then there exists a uniquely determined  $ C^1$-function $ x\, :\, \R \to \R $ such that 
\begin{equation}\label{fg}
\sup_{t\in\R} \|u(t)-c\varphi(\cdot-x(t))\|_{\H} \le K \varepsilon
\end{equation}
and 
\begin{equation}
\int_{\R} v(t) \rho'(\cdot-x(t))=0, \quad \forall t\in\R \; , \label{ort}
\end{equation}
where $ v=(4-\partial_x)^{-1} u $ and $\rho=(4-\partial_x)^{-1} \varphi $.
Moreover, for all $ t\in\R $, it holds
\begin{equation}\label{estc}
|\dot{x}(t)- c| \le K \varepsilon\; .
\end{equation}
At this stage, we fix $ 0<\theta<c $ and we take 
 \begin{equation}\label{defep}
 \varepsilon= \min \Bigl(\frac{2^{-10}\theta}{ KC_0},\varepsilon_0, \frac{\varepsilon_0'}{C_0}\Bigr)
 \end{equation}
 so that \eqref{stab} ensures that \eqref{fg} and \eqref{estc} hold with $ K\varepsilon \le \frac{\theta}{2^{10}}\le \frac{c}{2^{10}}$. It follows that  $ \dot{x} \ge \frac{3}{4} c $ on $ \R $. Moreover,   combining \eqref{fg}, \eqref{equi}
  and \eqref{dodo} we infer   that there exists $ R>0$ such that 
 $$
 \|u(t, \cdot+x(t))\|_{H^1(]-R,R[^c)} \le 2^{-9} \theta\; .
 $$
 
 Hence, $u $ satisfies the hypotheses of Lemma \ref{almostdecay} for any $ 0<\alpha<1$ such that 
  \begin{equation}\label{okok}
  (1-\alpha)\ge \frac{\theta}{4c} 
  \end{equation}
  and any  $ 0\le \gamma\le\frac{1}{12} (1-\alpha) c $. In particular, $ u$ satisfies the hypotheses of Lemma \ref{almostdecay} for 
   $ \alpha=1/3$. Note that the hypothesis \eqref{difini} with 
 $$
 \eta=\min\Bigl(\frac{2^{-10}\theta}{ KC_0},\varepsilon_0, \frac{\varepsilon_0'}{C_0}\Bigr)^8
 $$
 implies that \eqref{stab} holds with $ \varepsilon $ given by \eqref{defep}.

In the sequel we will make use of the following functionals that measure the quantity $ \H(u)+\gamma M(u) $ at the right and at the left of $ u$.
For $ 0\le \gamma\le \frac{c}{12} $, $ u\in Y$ and $ R>0 $ we set 
 \begin{equation}\label{defJr}
 J_{\gamma,r}^{R}(w)=\dist{5v^2+4v_x^2+v_{xx}^2+\gamma (u-u_{xx})}{\Psi(\cdot -R)} \; .
 \end{equation}
 and
 \begin{equation}\label{defJl}
 J_{\gamma, l}^R(w)=\dist{5v^2+4v_x^2+v_{xx}^2+\gamma (u-u_{xx})}{(1-\Psi(\cdot+R))} 
 \end{equation}
 where $ v=(4-\partial_x^2)^{-1} u $. 
 
 Let $ t_0\in\R $ be fixed.  Fixing $\alpha=\beta=1/3 $ and taking $ z(\cdot)=(1-\alpha) x(\cdot)$, $z(\cdot) $ clearly satisfies \eqref{condz}. Moreover, we have  $ J_{\gamma,r}^{R}(u(t_0, \cdot +x(t_0))=I^{+R}_{t_0}(t_0) $
  where $I^{+R}_{t_0} $ is defined in \eqref{defI}.  Since obviously, 
 $$
 J_{\gamma,r}^R \Bigl(u(t,\cdot+x(t))\Bigr)\ge I^{+R}_{t_0}(t)\; , \quad \forall t\le t_0,
 $$
 we deduce from \eqref{mono} that 
 \begin{equation}\label{monoJr}
 J_{\gamma,r}^R \Bigl(u(t_0,\cdot+x(t_0))\Bigr)\le J_{\gamma,r}^R\Bigl(u(t,\cdot+x(t))\Bigr)+K_0 e^{-R/6} \;  , \quad \forall t\le t_0,
 \end{equation}
 where $ K_0 $ is the constant appearing in \eqref{mono}. 
  Now, let us define 
 \begin{eqnarray*}
 \tilde{I}^{R}_{t_0}(t)  & = & \dist{5v^2(t)+4v_x^2(t)+v_{xx}^2(t)+\frac{c}{12} y(t)}{1-\Psi(\cdot-x(t)+R+\alpha(x(t_0)-x(t)))}\\
 & =& E(u(t))+cM(u(t)) -I^{-R}_{t_0}(t) \; ,
 \end{eqnarray*}
 where we take again $ z(\cdot)=(1-\alpha) x(\cdot)$. 
 Since $ M(\cdot) $ and $ E(\cdot) $ are conservation laws, \eqref{mono2} leads to 
   $$
 \tilde{I}^{R}_{t_0}(t)\ge \tilde{I}^{R}_{t_0}(t_0)-C e^{-R/6} , \; \forall t\ge t_0 \; .
 $$
 We thus deduce as above  that $ \forall t \ge t_0 $, 
 \begin{equation}\label{monoJl}
J_{\gamma,l}^R \Bigl(u(t,\cdot+x(t))\Bigr)\ge J_{\gamma,l}^R\Bigl(x(t_0,\cdot+x(t_0))\Bigr)-K_0 e^{-R/6} \; .
 \end{equation}

The following proposition ensures that, for $\varepsilon$ small enough,  the $ \omega$-limit set for the weak $ H^1$-topology of the orbit of $ u_0 $  is constituted by initial data of  $Y$-almost localized solutions. The crucial tools in the proof are the almost monotonicity properties \eqref{monoJr} and \eqref{monoJl}. We omit the proof since it is exactly the same that the proof of Proposition 5.2  in \cite{L}.
\begin{proposition}\label{propasym}
Let $ u_0 \in Y_+$ satisfying \eqref{stab} with $\varepsilon$ defined as in \eqref{defep}  and let $u \in C(\R;H^1(\R)) $ the emanating solution of \eqref{DP}. For any sequence $ t_n\nearrow +\infty $ there exists a subsequence $ \{t_{n_k}\}\subset \{t_n\} $ and $ \tilde{u}_0\in Y_+ $ such that 
\begin{equation}\label{ppp2}
 u(t_{n_k},\cdot+x(t_{n_k})) \weaktendsto{n_k\to +\infty} \tilde{u}_0 \mbox { in } H^1(\R) 
 \end{equation}
 and 
 \begin{equation}\label{pp2}
 u(t_{n_k},\cdot+x(t_{n_k})) \tendsto{n_k\to +\infty} \tilde{u}_0 \mbox { in } H^1_{loc}(\R) 
 \end{equation}
  where $ x(\cdot) $ is the $ C^1$-function uniquely determined by \eqref{fg}-\eqref{ort}. 
Moreover, the solution of \eqref{DP} emanating from $ \tilde{u}_0 $ is $Y$-almost localized.
\end{proposition}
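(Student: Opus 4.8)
The plan is to follow the Martel--Merle strategy exactly as in \cite{L}, splitting the argument into a soft compactness part and a harder rigidity-transfer part based on the almost monotonicity \eqref{monoJr}--\eqref{monoJl}. First I would record that, by \eqref{stab}--\eqref{fg}, the solution stays in a tube around the peakon family, $\sup_{t}\|u(t)-c\varphi(\cdot-x(t))\|_{\H}\le K\varepsilon$ with $\dot x\ge \tfrac34 c$. Since $M(u)=\dist{y}{1}$ is conserved and $y(t)\in\M_+$, the translated sequence $w_k:=u(t_{n_k},\cdot+x(t_{n_k}))$ is bounded in $Y_+$: bounded in $H^1(\R)$ with $w_k-w_{k,xx}\in\M_+$ of mass $M(u)$. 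Hence, along a subsequence, $w_k\rightharpoonup\tilde u_0$ in $H^1$ and $w_k-w_{k,xx}\rightharpoonup\tilde y_0$ tightly in $\M_+$ (tightness coming from the uniform tail control of the tube together with $y\ge 0$), so that $\tilde u_0\in Y_+$ and \eqref{ppp2} holds. To upgrade to \eqref{pp2} I would use that $w_{k,x}$ is uniformly bounded in $BV(\R)$ by \eqref{bv} and the mass bound on $y$; Helly's selection theorem then gives a.e. convergence of $w_{k,x}$ along a further subsequence, and with the uniform bound \eqref{ut} and dominated convergence this yields $w_{k,x}\to\tilde u_{0,x}$ in $L^2_{loc}$, hence $w_k\to\tilde u_0$ in $H^1_{loc}(\R)$.

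Next I would identify the limit dynamics. Applying the weak continuity statement of Proposition \ref{WP}(3) to $w_k\rightharpoonup\tilde u_0$ in $Y$, the solutions $u(t_{n_k}+s,\cdot+x(t_{n_k}))$ converge for each fixed $s$ to $\tilde u(s)$ in $C_w([-T,T];H^1)$ with the momentum converging tightly via \eqref{cont2}, where $\tilde u$ is the solution of \eqref{DP} emanating from $\tilde u_0$. By weak lower semicontinuity of the $L^2$ norm, hence of $\H$ through \eqref{equi}, the profile $\tilde u_0$ again satisfies the tube bound, so $\tilde u$ stays $\H$-close to the peakon family for all times and admits its own $C^1$ modulation center $\tilde x(\cdot)$ determined by \eqref{ort}, with $\dot{\tilde x}\ge\tfrac34 c$; uniqueness of the modulation parameter and continuity force $x(t_{n_k}+s)-x(t_{n_k})\to\tilde x(s)$.

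The heart of the matter is to show that $\tilde u$ is $Y$-almost localized, and this is where Lemma \ref{almostdecay} enters. I would fix $R>0$ and $s_1<s_2$ and run the almost monotonicity \eqref{monoJr} for the original solution between the times $t_{n_k}+s_1\le t_{n_k}+s_2$, giving
$$J_{\gamma,r}^R\bigl(u(t_{n_k}+s_2,\cdot+x(t_{n_k}+s_2))\bigr)\le J_{\gamma,r}^R\bigl(u(t_{n_k}+s_1,\cdot+x(t_{n_k}+s_1))\bigr)+K_0e^{-R/6}.$$
I expect the passage to the limit $k\to\infty$ in both sides to be the main obstacle: for the momentum term this is harmless since \eqref{cont2} gives tight convergence against $\Psi\in C_b(\R)$, but for the $\H$-density term one only has weak lower semicontinuity a priori, so one must exclude a loss of energy at spatial infinity. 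Here I would use the uniform tail bound of the tube, $\|u(t,\cdot+x(t))\|_{H^1(|x|>R')}\le K\varepsilon+Ce^{-R'}$, uniform in $t$ and hence in $k$: truncating at a large $R'$ reduces the matter to $[-R',R']$, where the strong $H^1_{loc}$ convergence applies, yielding genuine convergence of $J_{\gamma,r}^R$ (and of $J_{\gamma,l}^R$) along the sequence. Combining these limits with the fact that $t\mapsto J_{\gamma,r}^R(u(t,\cdot+x(t)))$ is almost nonincreasing and bounded below, one finds that $J_{\gamma,r}^R(\tilde u(s,\cdot+\tilde x(s)))$ is, up to the error $K_0e^{-R/6}$, independent of $s$, and the symmetric argument with \eqref{monoJl} does the same on the left. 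Since $\H(\tilde u)+\gamma M(\tilde u)$ is conserved and concentrated near the center by the tube bound, the only way the right- and left-localized pieces can both be almost stationary is that
$$\dist{5\tilde v^2+4\tilde v_x^2+\tilde v_{xx}^2+\gamma\,\tilde y}{\Psi(\cdot-\tilde x(s)-R)}\lesssim e^{-R/6},$$
uniformly in $s\in\R$, together with the symmetric bound against $1-\Psi(\cdot-\tilde x(s)+R)$, where $\tilde v=(4-\partial_x^2)^{-1}\tilde u$. This is precisely the $Y$-almost localization \eqref{defloc} for $\tilde u$ (indeed the stronger \eqref{defancien}), which completes the proof; everything outside the weighted-energy limit is soft functional analysis once Lemma \ref{almostdecay} is granted.
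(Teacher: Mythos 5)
Your soft steps are essentially the ones the paper has in mind (it defers to the proof of Proposition 5.2 of \cite{L}): boundedness of $u(t_{n_k},\cdot+x(t_{n_k}))$ in $Y_+$ from orbital stability and conservation of $M$, extraction of a weak $H^1$ limit, Helly plus dominated convergence for the $H^1_{loc}$ upgrade, Proposition \ref{WP}(3) to identify the limit flow, and modulation theory for $\tilde x(\cdot)$. One warning already at this stage: your claim that the momenta converge \emph{tightly} is unjustified and in general false --- momentum mass does escape to the left (the perturbation is only small in $\H\sim L^2$, its momentum mass can be of order one, and radiation leaves any window around the soliton); only vague (weak-$*$) convergence holds, which is still enough to get $\tilde u_0\in Y_+$, but with possibly $M(\tilde u_0)<M(u_0)$. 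This misconception --- that nothing is lost at spatial infinity --- is exactly what breaks your final, decisive step.

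The genuine gap is in your last paragraph. First, the claimed ``genuine convergence of $J_{\gamma,r}^R$ (and of $J_{\gamma,l}^R$)'' is false for the left functional: the tube bound $\|u(t,\cdot+x(t))\|_{H^1(|x|>R')}\le K\varepsilon+Ce^{-R'}$ contains the \emph{fixed} constant $K\varepsilon$, so truncation controls the energy loss at infinity only up to $O(\varepsilon^2)$, not up to $o_{R'}(1)$; worse, the tube bound gives no control whatsoever on the tails of the measure $y$, which is precisely what \eqref{defloc} and the term $\gamma\,y$ in $J_{\gamma,l}^R$ record. In general $\lim_k J^R_{\gamma,l}\bigl(u(t_{n_k}+s,\cdot+x(t_{n_k}+s))\bigr)$ strictly exceeds $J^R_{\gamma,l}\bigl(\tilde u(s,\cdot+\tilde x(s))\bigr)$, the difference being all the radiation that has escaped; one only has lower semicontinuity. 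Second, even granting these limits, your concluding inference --- almost stationarity of the right and left functionals plus conservation ``forces'' the tails to be $O(e^{-R/6})$ --- is a non sequitur: a solution carrying a permanent $O(1)$ amount of energy or momentum at a fixed distance behind the soliton has perfectly stationary $J_{\gamma,l}^R$; stationarity is not localization. What actually closes the proof in \cite{L} is a saturation argument in $R$ that you are missing. The right tail is made uniformly small not by stationarity but by \eqref{monoJr} applied from $t=0$: $J^R_{\gamma,r}(u(t,\cdot+x(t)))\le J^R_{\gamma,r}(u(0,\cdot+x(0)))+K_0e^{-R/6}\le 2\delta$ for all $t\ge0$ once $R$ is large, and this upper bound survives the (semicontinuous) passage to the limit. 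For the left tail one sets $q_R:=\lim_{t\to+\infty}\dist{5v^2+4v_x^2+v_{xx}^2+\gamma y\,(t)}{\Psi(\cdot-x(t)+R)}$, which exists up to $K_0e^{-R/6}$ by \eqref{monoJl}; since $R\mapsto q_R$ is non-decreasing and bounded by $E(u)+\gamma M(u)$, one can choose $R$ with $q_\infty-q_R<\delta$ where $q_\infty=\lim_{R\to\infty}q_R$. Lower semicontinuity gives $E(\tilde u_0)+\gamma M(\tilde u_0)\le q_\infty$, while the windowed quantity of $\tilde u(s)$ between $\tilde x(s)-R$ and $\tilde x(s)+R'$ is a \emph{true} limit (the window is compact up to exponentially small tails, where strong local convergence applies) and is bounded below by $q_R-2\delta-Ce^{-R/6}$. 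Conservation of $E+\gamma M$ along $\tilde u$ then squeezes the sum of both tails by $q_\infty-q_R+O(\delta)\lesssim\delta$, uniformly in $s$, which is the $Y$-almost localization. Without this monotone-in-$R$ Cauchy argument, your proposal does not prove the ``moreover'' part of the proposition.
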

So, let $ u_0 \in Y_+ $ satisfying  \eqref{stab} with $\varepsilon$ defined as in \eqref{defep} and let  $ t_n\nearrow +\infty $ be a sequence of positive real numbers. According to the above proposition, \eqref{ppp2}-\eqref{pp2} hold for some subsequence $ \{t_{n_k}\}\subset \{t_n\} $ and $ \tilde{u}_0\in Y_+ $ such that 
 the  solution of \eqref{DP} emanating from $ \tilde{u}_0 $ is $Y$-almost localized. Theorem 
\ref{rigidity} then forces 
$$
 \tilde{u}_0= c_0\varphi (\cdot-x_0) 
 $$
 for some $x_0\in\R $ and $ c_0 $ such that $ |c-c_0|\le K \varepsilon\le c/2^9 $. Since \eqref{ppp2}  implies that  
 $$
 v(t_{n_k},\cdot+x(t_{n_k})) \weaktendsto{n_k\to +\infty} \tilde{v}_0 \mbox { in } H^3(\R) 
 $$
 with  $ v_n=(4-\partial_x^2)^{-1} u $ and $  \tilde{v}_0=(4-\partial_x^2)^{-1}  \tilde{u}_0 $, we infer that 
 $\tilde{v}_0 $ satisfies the orthogonality condition \eqref{ort} and thus 
  we must have $ x_0=0 $. On the other hand, \eqref{pp2} and \eqref{fg} ensure that $\displaystyle c_0=\lim_{n\to +\infty} \max_{\R} u(t_{n_k}) $ and thus 
  $$
  u(t_{n_k},\cdot+x(t_{n_k})) -\lambda(t_{n_k})\varphi  \weaktendsto{k\to +\infty} 0 \mbox{ in } H^1(\R)
  $$
where we set $
 \lambda(t):=\max_{\R} u(t) , \quad \forall t\in\R $. Since this is the only possible limit, it follows that 
 $$
  u(t,\cdot+x(t)) -\lambda(t)\varphi  \weaktendsto{t\to  +\infty} 0 \mbox{ in } H^1(\R)\; .
  $$
  and thus 
  \begin{equation}\label{pp3}
 u(t,\cdot+x(t))-\lambda(t)\varphi  \tendsto{t\to 0} 0 \mbox { in } H^1_{loc}(\R) 
 \end{equation}
 and 
  \begin{equation}\label{pp33}
 v(t,\cdot+x(t))-\lambda(t)\rho  \tendsto{t\to 0} 0 \mbox { in } H^3_{loc}(\R) 
 \end{equation}
  \subsection{Convergence in $ H^1(]-A,+\infty[) $ for any $ A>0 $.}\label{51}
 Let $ \delta>0 $ be fixed. Choosing  $ R>0 $ such that $J_{0,r}^R(u(0),\cdot +x(0)) <\delta $ and $K_0 e^{-R/6} \le \delta $,  where $ K_0$ is  the constant that appears in \eqref{monoJr}. We deduce   from \eqref{monoJr}   that 
  $ J_{0,r}^R\Bigl(u(t,\cdot+x(t))\Bigr)<2 \delta $ for all $ t\ge 0 $. This fact together with  the local strong convergence \eqref{pp33} clearly ensure that
  \begin{equation}\label{cvcv}
 v(t,\cdot+x(t))-\lambda(t) \rho  \tendsto{t\to +\infty}  0\mbox{   in } H^2(]-A,+\infty[)  \mbox{ for any } A>0 
 \end{equation}
 and thus 
 $$
  u(t,\cdot+x(t))-\lambda(t) \varphi  \tendsto{t\to +\infty}  0\mbox{   in } L^2(]-A,+\infty[)  \mbox{ for any } A>0 \; .
 $$
 Since $\{u(t), t\in\R\} $ is bounded in $ Y$, this leads to 
  \begin{equation}\label{cvcu}
 u(t,\cdot+x(t))-\lambda(t) \varphi  \tendsto{t\to +\infty}  0\mbox{   in } H^1(]-A,+\infty[)  \mbox{ for any } A>0 \; .
 \end{equation}

 \subsection{Convergence of the scaling parameter}\label{52}
 We claim that 
 \begin{equation}\label{cvlambda}
  \lambda(t)\tendsto{t\to +\infty} c_0\;  .
  \end{equation}
  Let us fix again  $ \delta>0 $ and take  $ R>0 $ such that 
  $ K_0 e^{-R/6} <\delta $. \eqref{monoJl} with $ \gamma=0 $ together with the conservation of $ E(u) $ ensure that, for any couple $ (t,t')\in\R^2$ with $ t>t' $ it holds 
  $$
  \int_{\R} (5v^2+4v_x^2+v_{xx}^2)(t,x) \Psi(x-x(t)+R) \, dx \le  
      \int_{\R} (5v^2+4v_x^2+v_{xx}^2)(t',x) \Psi(x-x(t')+R) \, dx+\delta 
 $$
  On the other hand, by the strong convergence \eqref{cvcv} and the exponential localization of $ \varphi , \varphi' $ and $ \Psi $, there exists $ T>0 $ such that 
   for all $ t\ge T $, 
   $$
    \Bigl| \int_{\R} (5v^2+4v_x^2+v_{xx}^2)(t,x) \Psi(x-x(t)+R) \, dx- \lambda^2(t)E(\varphi) \Bigr| \le \delta \; .
   $$
  It thus follows that 
  $$
  \lambda^2(t) E(\varphi)\le \lambda^2(t') E(\varphi)+3 \delta , \quad \forall t>t'>T \; .
  $$
  Since $ \delta>0 $ is arbitrary, this forces $\lambda $ to have a limit at $ +\infty $ and completes the proof of the claim.
   \subsection{Convergence of $\dot{x} $} \label{53}
   We set  $W(t,\cdot):=c_0\rho( \cdot-x(t))$ and $ \eta(t)=v(t)-c_0 \rho(\cdot-x(t))=v(t)-W(t)$ for all $ t\ge 0 $. Differentiating \eqref{ort} with respect to time and using that $ 4\rho -\rho''=\varphi $, we get 
 $$
\int_{\R} \eta_t  \partial_x W =\dot{ x} \,\int_{\R} 
\eta \partial_x^2 W     = - c_0 \dot{ x} \int_{\R} \eta  \varphi(\cdot -x(t))+ 4 \dot{ x}\int_{\R}   \eta W, \; 
$$
 and thus
\begin{equation}
\Bigl|\int_{\R}  \eta_t  \partial_x W \Bigr|\le   (c_0|\dot{ x}-c_0| +c_0^2 | )
\int_{\R} \eta \varphi(\cdot -x(t)) |+ 4 c_0 | \int_{\R}   \eta W|
 \; . \label{huhu}
\end{equation}
We notice that $ v=(4-\partial_x^2)^{-1} $ is solution of 
\begin{equation}\label{eqv}
v_{t}=-2\partial_{x}v^{2}-\frac{1}{2}\partial_{x}(1-\partial^{2}_{x})^{-1}
(12v^{2}+8v^{2}_{x}+v^{2}_{xx}),~~(t,x)\in\mathbb{R}_{+}\times\mathbb{R}.
\end{equation}
Substituting $ v $ by $ \eta+W$ in \eqref{eqv}  and
using the  equation satisfied by $W$, we obtain the following equation satisfied by $ \eta$ :
  \begin{equation}\label{eqeta}
  \eta_t  - (\dot{x}-c_0)  \partial_x W
= -4 \partial_x  \eta W-(1-\partial_x^2)^{-1}\partial_x \Bigl(8 \eta W +  16\eta_x W_x+ \eta_{xx} W_{xx}\Bigr)\; .
 \end{equation}
 At this stage it is worth noticing that \eqref{cvcv}-\eqref{cvlambda} ensures that 
 \begin{equation}\label{fin}
 |\int_{\R} \eta \varphi(\cdot -x(t))|+| \int_{\R}   \eta W|+\sum_{i=0}^2  \|\partial_x^i \eta(t) \partial_x^i W(t)\|_{L^2} \tendsto{t\to +\infty} 0 \; .
 \end{equation}
Taking the $ L^2 $-scalar product with $ \partial_x W$ with \eqref{eqeta}, integrating by parts, using that $ \|\partial_x W\|_{L^2}^2=
\frac{7}{54} c_0^2$  and the
 decay of $ \rho $ and its first derivative,  \eqref{huhu}, \eqref{fin}, \eqref{fg}  lead to 
 $$
 |\dot{x}(t)-c_0| \tendsto{t\to \infty} 0 \; .
 $$

   \subsection{Strong $ H^1$-convergence on $ ]\theta t ,+\infty[$}\label{54}
   We deduce  from \eqref{cvlambda} that  
 $$
 v(t,\cdot)-c_0 \rho (\cdot-x(t)) \weaktendsto{t\to +\infty}  0\mbox{   in } H^1(\R)  
 $$
 and 
 \begin{equation}\label{jw}
 v(t,\cdot+x(t))-c_0 \rho
 \tendsto{t\to +\infty}  0\mbox{   in } H^1(]-A,+\infty[)  \mbox{ for any } A>0 \; .
 \end{equation}
 \eqref{cvforte} will follow by combining these  convergence results with the almost non increasing property  \eqref{mono}. 
 Indeed,  let us fix $ \delta>0$ and take $ R\gg 1 $ such that 
 \begin{equation}\label{sww}
 \|\rho\|_{H^2(]-\infty,-R/2[}^2< \delta \quad\mbox{and}\quad  \|\Psi-1\|_{L^\infty(]R/2,+\infty[)} <\delta 
\end{equation}
where $ \Psi $ is defined in \eqref{psipsi}. According to the above convergence result there exists $ t_0>0 $ such that 
  $ x(t_0)>R $ and  for all $ t\ge t_0 $, 
 $$
 \int_{-R/2}^{+\infty} (5\eta^2+4\eta_x^2+\eta_{xx}^2)(t,\cdot+x(t))  < \delta \, , 
 $$
 where we set $\eta=v(t)-c_0 \rho(\cdot-x(t)) $. In particular, \eqref{sww} ensures that
 \begin{equation}\label{swww}
\Bigl| E(\varphi)-\int_{\R} \Bigl( 5v(t,\cdot+x(t))\rho +4 v_x(t,\cdot+x(t))\rho_x 
+v_{xx} (t,\cdot+x(t))\rho_{xx}(t,\cdot+x(t)) \Bigr) \Psi(\cdot+y) \Bigr| \lesssim \delta, \quad \forall y\ge R,\, \forall t\ge t_0 \; ,
 \end{equation}
 We set  $ z(t)=\frac{\theta}{2} t$ and notice that  \eqref{okok} ensures that \eqref{condz} is satisfied with $ 1-\alpha=\frac{\theta}{4c} $ and $ \beta=1/4$. Moreover, 
  as noticing in the beginning of this section (see \eqref{okok}), $ u $ satisfies the hypotheses of Lemma \eqref{almostdecay} for such $ \alpha $. 
 According to \eqref{mono2} with $ \gamma=0$, we thus get for all $t\ge t_0$,
 $$
 \int_{\R} (5\eta^2+4\eta_x^2+\eta_{xx}^2)(t,\cdot) \Psi(\cdot-x(t_0)-\frac{\theta}{2}(t-t_0)+R) \le  \int_{\R} (5\eta^2+4\eta_x^2+\eta_{xx}^2)(t_0,\cdot) \Psi(x-x(t_0)+R )+K_0(\alpha)  e^{-R/6}
 $$
  which leads to  
 \begin{align*}
 \int_{\R}&  (5\eta^2+4\eta_x^2+\eta_{xx}^2)(t,\cdot) \Psi\Bigl(\cdot-x(t_0)-\frac{\theta}{2}(t-t_0)+R\Bigr)=
 \int_{\R} (5\eta^2+4\eta_x^2+\eta_{xx}^2)(t,\cdot) \Psi\Bigl(\cdot-x(t_0)-\frac{\theta}{2}(t-t_0)+x_0\Bigr)  \\
 &-2 c_0
 \int_{\R} \Bigl(5 v(t) \rho(\cdot-x(t)) +v_x(t) \rho_x(\cdot-x(t))+v_{xx}(t) \rho_{xx}(\cdot-x(t))\bigr) \Psi\Bigl(\cdot-x(t_0)-\frac{\theta}{2}(t-t_0)+R\Bigr)\\
 &+ c_0^2 \int_{\R} (5\rho^2+4\rho_x^2+\rho_{xx}^2)(t,\cdot-x(t))  \Psi\Bigl(\cdot-x(t_0)-\frac{\theta}{2}(t-t_0)+R\Bigr) \\
 &  \le   \int_{\R} (5\eta^2+4\eta_x^2+\eta_{xx}^2)(t_0,\cdot) \Psi(\cdot-x(t_0)+R )+K_0(\alpha)  e^{-R/6}\\
 &-2 c_0
 \int_{\R} (5 v(t_0) \rho(\cdot-x(t_0)) +4 v_x(t_0) \rho_x(\cdot-x(t_0))+v_{xx}(t_0) \rho_{xx}(\cdot-x(t_0))  \Psi(\cdot-x(t_0)+R )+ C\, \delta \\
 &+ c_0^2 \int_{\R} \Bigl(5\rho^2+4\rho_x^2+\rho_{xx}^2)(t_0,\cdot-x(t_0)) \Bigr) \Psi(\cdot-x(t_0)+R )  +C e^{-R/6}\\
 &  \lesssim  \int_{\R}(5\eta^2+4\eta_x^2+\eta_{xx}^2)(t_0,\cdot)  \Psi(\cdot-x(t_0)+R)+C( e^{-R/6}+\delta) \\
 & \lesssim \delta + e^{-R/6}
 \end{align*}
 where in the next to the  last step we used that $ \rho $ decays exponentially fast and \eqref{swww} since 
  $x(t)-x(t_0)-\frac{\theta}{2}(t-t_0)+R \ge R $ for all $ t\ge t_0$.
  Taking $R$ large enough and $ t_1>t_0$ such that  $ \theta t_1\ge x(t_0)+\frac{\theta}{2} (t_1-t_0)-R $, it follows that for 
  $t\ge t_1 $, 
  $$
 \int_{\R} (5\eta^2+4\eta_x^2+\eta_{xx}^2)(t,\cdot)  \Psi(\cdot-\theta t) \lesssim \delta
 $$
 which completes the proof of the strong $H^2 $ convergence  of $ v(t,\cdot+x(t)) ,$ on $]\theta t,+\infty[ $. The strong $ H^1$-convergence
  of $u(t,\cdot+x(t)) $ follows by using that $ u=(4-\partial_x^2)v $ and that $ u$ is uniformly in time bounded in $ H^{\frac{3}{2}+} $. 
 \subsection{Strong $H^1$-convergence at the left of any given point.}
 In this subsection we complete the proof of Theorem \ref{asympstab} by proving the  lemma below.  As for the {C-H} equation,  the main observation to prove this lemma is that all  the energy of the solutions to the DP equation  that have a non negative density momentum,
   is traveling to the right. This property should be shared by most Hamiltonian CH-type equation  because of the absence  of linear  part.
  \begin{lemma}\label{apen}
   For any $ u_0\in Y_+ $ and any $ z\in\R  $, denoting by $ u\in C(\R;H^1) $  solution of \eqref{DP} emanating from $ u_0$  it holds 
 \begin{equation}\label{tdt}
\lim_{t\to+\infty} \|u(t)\|_{H^1(]-\infty, z[)} = 0 \; .
 \end{equation}
 \end{lemma}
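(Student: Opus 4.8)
The plan is to reduce everything to controlling the $L^1$-mass lying to the left of $z$. Since $u(t)\in Y_+$ for all $t$ by Proposition \ref{WP}, we have $u(t)=\tfrac12 e^{-|\cdot|}\ast y(t)\ge 0$, and by \eqref{dodo} and \eqref{ut} both $|u_x|\le u$ and $\|u(t)\|_{L^\infty}\le M(u)$ hold. Hence
\[
\|u(t)\|_{H^1(]-\infty,z[)}^2\le 2\int_{-\infty}^z u^2(t)\,dx\le 2M(u)\int_{-\infty}^z u(t)\,dx ,
\]
so it suffices to prove $\int_{-\infty}^z u(t)\,dx\to 0$. To quantify this mass I would use the profile $\Psi$ from \eqref{defPsi} and set $m_z(t)=\int_{\R}u(t,x)\,(1-\Psi(x-z))\,dx$, which is well defined and bounded by $M(u)$ because $u(t)\ge0$ and $0\le 1-\Psi\le1$.

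The crucial point is that $m_z$ is non-increasing, expressing that mass flows only to the right through a fixed point. Using $u\in C^1(\R;L^1\cap L^2)$ and the weak form \eqref{DP}, an integration by parts gives
\[
m_z'(t)=-\frac12\int_{\R}u^2(t)\,\Psi'(\cdot-z)-\frac32\int_{\R}\bigl((1-\partial_x^2)^{-1}u^2\bigr)(t)\,\Psi'(\cdot-z)\le 0 ,
\]
since $\Psi'\ge0$, $u^2\ge0$ and $(1-\partial_x^2)^{-1}u^2=\tfrac12 e^{-|\cdot|}\ast u^2\ge0$. Thus $m_z(t)$ decreases to a limit $\ell(z)\ge0$, and integrating in time yields $\int_0^{+\infty}\!\int_{\R}u^2\,\Psi'(\cdot-z)\,dx\,dt\le 2m_z(0)\le 2M(u)$, a bound uniform in $z$.

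It remains to show $\ell(z)=0$. Swapping the order of integration and using that for $x\in[z_0,z_0+1]$ one has $\int_{z_0}^{z_0+1}\Psi'(x-z)\,dz=\Psi(x-z_0)-\Psi(x-z_0-1)\ge c>0$, the flux bound gives $\int_0^{+\infty}\|u(t)\|_{L^2(-A,A)}^2\,dt<+\infty$ for every $A>0$; a diagonal extraction then produces $t_n\to+\infty$ with $u(t_n)\to0$ in $L^2_{\mathrm{loc}}(\R)$. Fix $\delta>0$; since $u_0\ge0$ is integrable, choose $R$ so large that $m_{-R}(0)<\delta$, whence $m_{-R}(t)<\delta$ for all $t$ by monotonicity. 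Writing $m_z(t)=m_{-R}(t)+\int_{\R}u(t)\,w$ with $w:=\Psi(\cdot+R)-\Psi(\cdot-z)\ge0$ in $L^1\cap L^\infty$, the local convergence $u(t_n)\to0$ together with $\|u\|_{L^\infty}\le M(u)$ forces $\int u(t_n)\,w\to0$, so $\limsup_n m_z(t_n)\le\delta$; as $\ell(z)$ exists and $\delta$ is arbitrary, $\ell(z)=0$. Finally, for any $z'>z$ we have $1-\Psi(x-z')\ge 1-\Psi(z-z')>0$ on $]-\infty,z]$, hence $\int_{-\infty}^z u(t)\le (1-\Psi(z-z'))^{-1}m_{z'}(t)\to0$, which with the first reduction proves the lemma.

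The main obstacle is precisely the step $\ell(z)=0$: monotonicity alone only yields a limit, and one must rule out mass staying trapped to the left of $z$. The combination I rely on is the time-integrability of the rightward flux (giving $L^2_{\mathrm{loc}}$ decay along a subsequence) together with uniform-in-time smallness of the far-left tail, the latter coming again from the monotonicity of $m_{-R}$ and the finiteness of the initial mass. The nonlocal terms cause no trouble here exactly because the sign condition $y\in\M_+$ makes the flux one-signed.
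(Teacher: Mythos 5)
Your proof is correct, but it takes a genuinely different route from the paper's. The paper tracks a \emph{moving} reference point: it defines $x_\gamma(t)$ implicitly by \eqref{defxg}, i.e.\ as the point leaving exactly the amount $\gamma$ of the energy density $5v^2+4v_x^2+v_{xx}^2$ (with $v=(4-\partial_x^2)^{-1}u$) weighted by $\Psi$ to its right, proves via the differential identity of Lemma \ref{Lemma 3.2}, the pointwise comparisons of Lemma \ref{Lemma 3.3} and the bound $h\ge\frac13u^2$ the quantitative drift estimate \eqref{td6}, $\dot x_\gamma(t)\ge\frac1{50}\bigl(\int u^2\Psi'(\cdot-x_\gamma(t))\bigr)^{1/2}>0$, and then shows \emph{by contradiction} that $x_\gamma(t)\to+\infty$, using local decay along a sequence, the inequality $u(t,x)\le e^{|x-x_0|}u(t,x_0)$ coming from $|u_x|\le u$, and the monotonicity of $x_{\gamma'}$ for $\gamma'>\gamma$. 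You instead work with a \emph{fixed} reference point and with the mass density $u$ itself rather than the energy density of $v$: your monotonicity formula for $m_z(t)=\int u(t)(1-\Psi(\cdot-z))$ follows in two lines from the weak form \eqref{DP}, because both flux terms $\frac12u^2$ and $\frac32(1-\partial_x^2)^{-1}u^2$ are automatically non-negative, so no analogue of Lemma \ref{Lemma 3.2}, Lemma \ref{Lemma 3.3} or of $h\ge\frac13u^2$ is needed. The core mechanism is the same in both proofs (time-integrability of the one-signed rightward flux yields local $L^2$ decay along a sequence $t_n\to+\infty$), but you close the argument directly -- far-left tail smallness of $m_{-R}$ by monotonicity plus $u_0\in L^1$, hence $\ell(z)=0$ -- instead of through a contradiction. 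What your approach buys is economy: it needs only $u\ge 0$, $|u_x|\le u$ from \eqref{dodo}, the $L^\infty$ bound \eqref{ut} and the conservation of $M(u)$, and it stays entirely at the level of $u$; the integrations by parts you perform are licit since Proposition \ref{WP} gives $u\in C^1(\R;L^1\cap L^2)\cap C(\R;W^{1,1}\cap H^1)$. What the paper's approach buys is the quantitative estimate \eqref{td6} on the speed of the energy-carrying point, which is of the same nature as the monotonicity machinery (Lemma \ref{almostdecay}) used elsewhere in Section \ref{sect4}; for Lemma \ref{apen} alone, however, your argument is simpler and equally rigorous.
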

 \begin{proof}
 Let $ 0<\gamma<\|u_0\|_{\H}^2$ and let $ x_\gamma \, :\, \R\to \R $ be defined by 
 \begin{equation}\label{defxg}
 \int_{\R} (5v^2+4v_x^2+v_{xx}^2)(t) \Psi(\cdot -x_\gamma(t)) =\gamma 
 \end{equation}
 with $ \Psi $  defined in \eqref{defPsi}.
 Note that $ x_\gamma(\cdot) $ is well-defined  since $ u_0\in Y_+ $ forces $ u>0 $ on $ \R^2 $ and thus for any fixed $ t\in \R $, 
 $ z\mapsto \int_{\R} (5v^2+4v_x^2+v_{xx}^2)(t) \Psi(\cdot-z) $ is a decreasing continuous bijection from $ \R $ to $ ]0,\|u_0\|_{\H}^2[ $. Moreover, $ u\in C(\R; H^1) $ ensures that $ v\in C(\R; H^3)  $ and thus
  $ x_\gamma(\cdot) $ is a continuous function.  \eqref{tdt} is clearly a direct consequence of the fact that 
  \begin{equation}\label{td3}
 \lim_{t\to +\infty} x_\gamma (t)=+\infty \; .
 \end{equation}
 To prove  \eqref{td3} we first claim that for any $ t\in \R $ and any $ \Delta>0 $ it holds 
 \begin{equation}\label{td4}
 x_\gamma(t+\Delta)-x_\gamma(t) \ge \frac{10}{27}\Bigl(\int_{t}^{t+\Delta} \int_{x_\gamma(t)}^{x_\gamma(t)+2} u^2(\tau,s) \, ds\, d\tau  \Bigr)^{1/2}>0 \; .
 \end{equation}
 Let us prove this claim. First we notice that by continuity with respect to initial data, it suffices to prove \eqref{td4} for $ u\in C^\infty(\R; H^\infty)
 \cap L^\infty(\R; Y_+)  $. Then a simple application of the implicit function theorem ensures that $ t\mapsto x_\gamma (t) $ is of class $ C^1 $. Indeed, 
 $$
 \psi \, :\, (z,v)\mapsto \int_{\R} (5v^2+4v_x^2+v_{xx}^2) \Psi(\cdot-z) 
 $$
 is of class $ C^1 $ from $ \R \times H^2(\R) $ into $ \R $ and for any $ (z_0,v)\in \R\times H^3(\R)/\{0\} $, 
 $\partial_z \psi(z_0,v)=  \int_{\R}  (5v^2+4v_x^2+v_{xx}^2)\Psi'(\cdot-z_0)>0 $.  Now we need the two following Lemmas proved in the appendix :
 \begin{lemma}\label{Lemma 3.2}
Let $u\in C(]-T,T[; H^\infty(\R))$, with $0<T\le +\infty$, be a solution of equation \eqref{DP}. For   any smooth space function $g :\mathbb{R}\mapsto\mathbb{R}$, it holds
\begin{align}
\frac{d}{dt}\int_{\mathbb{R}}&\left(4v^{2}+5v^{2}_{x}+v^{2}_{xx}\right)(t) g\nonumber\\
&=\frac{2}{3}\int_{\mathbb{R}}u^{3}(t) g' +5 \int_{\R} v(t)h(t) g' -4 \int_{\R} v u^2(t) g' +\int_{\R} v_x(t) h_x(t) g' , \quad \forall t\in ]-T,T[ 
\label{4.29}
\end{align}
where  $h=(1-\partial^{2}_{x})^{-1}u^{2}$.
\end{lemma}
 \begin{lemma}\label{Lemma 3.3}
Let $u\in Y_+ $ and $ v=(4-\partial_x^2)^{-1} u $. Then the following estimates hold :
\begin{equation} \label{estuv}
 3 v \le u \le 6 v , \quad |v_x|\le 2 v  \quad \text{ and } |v_{xx}| \le \frac{4}{3} u 
\end{equation}

\end{lemma}

Integrating by parts the last term of the right-hand side member of \eqref{4.29} and using that $h_{xx}  = -u^2 +h $, we infer that 
 \begin{align}
   \dot{x}_\gamma(t) \int_{\R} (5v^2+ & 4v_x^2+v_{xx}^2)\Psi'(\cdot-x_\gamma(t))=\frac{2}{3}\int_{\mathbb{R}}u^{3}(t) \Psi'(\cdot-x_\gamma(t)) \nonumber\\
   & +4 \int_{\R} v(t)h(t) \Psi'(\cdot-x_\gamma(t))-3 \int_{\R} v(t) u^2(t) \Psi'(\cdot-x_\gamma(t))+\int_{\R} v(t) h_x(t) 
   \Psi''(\cdot-x_\gamma(t)) \label{edr}
   \end{align}
 Observe that  by using  integration by parts and $ |u_x|\le u $ we get 
 \begin{align*}
 h(x) & = \frac{1}{2} e^{-x} \int_{-\infty}^x e^\eta u^2(\eta) + \frac{1}{2} e^{x} \int_{x}^{+\infty} e^{-\eta} u^2(\eta)\\
 & =  \frac{1}{2} u^2(x) - e^{-x} \int_{-\infty}^x e^\eta u u_x (\eta)+\frac{1}{2} u^2(x) +e^{x}  \int_x^{+\infty} e^{-\eta} u u_x (\eta)\\
 & \ge  u^2(x) -\int_{\R} e^{-|x-\eta|} u^2 (\eta) \\
  & \ge  u^2(x) - 2h(x)
  \end{align*}
 and thus $ h(x) \ge \frac{1}{3} u^2(x) $.  
  Combining this estimate  with $ |h_x| \le h $ and \eqref{estuv}, using that by direct calculations $
  |\Psi''|\le \Psi'/6 $,  we infer that
 $$
 3 v u^2 \Psi' +v |h_x| |\Psi''|  \le  (3u^2  +\frac{1}{6}v |h_x| ) \Psi' \le (4 v h +\frac{31}{54} u^3)\Psi'
 $$
  Injecting this last  inequality  in \eqref{edr} we eventually get 
 $$
   \dot{x}_\gamma(t) \int_{\R} (5v^2+  4v_x^2+v_{xx}^2)\Psi'(\cdot-x_\gamma(t))
   \ge \frac{5}{54} \int_{\mathbb{R}}u^{3}(t) \Psi'(\cdot-x_\gamma(t)) \; .
  $$
  Noticing that by \eqref{estuv}, $ 5v^2+  4v_x^2+v_{xx}^2\le \frac{37}{9} u^2 $, 
    H\"older inequality together with \eqref{estuv} and  the fact that $\Psi' $ is a non negative function of total mass 1 lead to
 \begin{equation}\label{td6}
 \dot{x}_\gamma(t) \ge \frac{1}{50}\Bigl( \int_{\R} u^2 \Psi'(\cdot-x_\gamma(t))\Bigr)^{1/2} \; .
 \end{equation}
 Integrating this inequality between $ t $ and $ t+\Delta $  yields  \eqref{td4} that obviously implies that $x_\gamma(\cdot) $ is an increasing function. In particular there exists $ x_\gamma^\infty\in \R\cap \{+\infty\} $ such that $x_\gamma(t) \nearrow x_\gamma^\infty $ as $ t\to +\infty $ and it remains to prove that  $x_\gamma^\infty=+\infty $. Assuming the contrary, we first notice that \eqref{estuv} and $ u \le \|y_0\|_{\M} $ on $ \R^2 $ ensure that \eqref{defxg}  leads to
 \begin{equation}\label{contro}
 \lim_{t\to +\infty}  \int_{\R} (5v^2+  4v_x^2+v_{xx}^2)\Psi'(\cdot-x_\gamma(t))= \lim_{t\to +\infty}  \int_{\R} (5v^2+ 4v_x^2+v_{xx}^2)\Psi'(\cdot-x_\gamma^\infty)=\gamma \; .
 \end{equation}
 Now, taking $ \Delta=1 $, \eqref{td4} forces
 $$
 \lim_{t\to  +\infty} \int_{t}^{t+1} \int_{x_\gamma(t)}^{x_\gamma(t)+2} u^2(\tau,s)\, ds\,d\tau =0 
 $$
 which, recalling that $ x_\gamma(t)\to  x_\gamma^\infty$ , leads to 
  $$
 \lim_{t\to  +\infty} \int_{t}^{t+1} \int_{x_\gamma^\infty}^{x_\gamma^\infty+2}u^2(\tau,s) \, ds\, d\tau =0 \; .
 $$
 In particular there exists a sequence $(t_n,x_n)_{n\ge 1}  \subset \R\times  [x_\gamma^\infty,x_\gamma^\infty+2] $ with $ t_n\nearrow +\infty $ such that 
 $ u(t_n,x_n) \to 0 $ as $ n\to \infty $. Therefore,  making use of the fact that $ |u_x|\le u $ on $ \R^2 $ forces, for any $(t,x_0)\in \R^2 $,  that 
 \begin{equation}
 u(t,x) \le e^{|x_0-x|} u(t,x_0) , \quad \forall x\in \R \;  ,
 \end{equation}
 we infer that for any $ A>0 $,
$$
 \lim_{n\to \infty} \sup_{ x\in[x_\gamma^\infty-A,  x_\gamma^\infty+A]} u(t_n,x) =0 \; .
$$
 and \eqref{estuv} then yields 
  \begin{equation}\label{td7}
 \lim_{n\to \infty} \sup_{ x\in[x_\gamma^\infty-A,  x_\gamma^\infty+A]} [5v^2(t_n,x) 
 +4 v_x^2(t_n,x)+v_{xx}^2(t_n,x)] =0 \; .
 \end{equation}
Finally, taking $ A>0 $ such that $ x_\gamma^\infty-A<x_{\gamma'}(0) $ with $\gamma < \gamma'<\|u_0\|_{H^1}^2 $, we infer from \eqref{td7} and the monotonicity of 
 $  t\mapsto x_{\gamma'}(t) $ that 
 $$
 \lim_{n\to \infty}\int_{\R} (5v^2+  4v_x^2+v_{xx}^2)(t_n,\cdot) \Psi(\cdot - x_\gamma^\infty) =\gamma' \; .
 $$
This contradicts \eqref{contro} and concludes the proof of the lemma.
\end{proof}
  \noindent
  \section{Asymptotic stability of train of peakons}\label{6}
In \cite{Andre1} the orbital stability in $L^2(\R) $ of  well ordered trains of peakons is established. More precisely, the following theorem is proved :
\begin{theorem}[\cite{Andre1}]\label{mult-peaks}
Let be given $ N $ velocities $c_1,.., c_N $ such that $0<c_1<c_2<..<c_N $.
There exist   $ A>0 $, $ L_0>0 $
 and $ \varepsilon_0>0 $ such that if  $ u\in C(\R;H^1) $ is
   the global solution of (C-H) emanating from $ u_0\in Y_+ $, with 
 \begin{equation}
 \|u_0-\sum_{j=1}^N \varphi_{c_j}(\cdot-z_j^0) \|_{\H} \le \varepsilon^2 \label{ini}
 \end{equation}
 for some  $ 0<\varepsilon<\varepsilon_0$ and $ z_j^0-z_{j-1}^0\ge L$,
with $ L>L_0 $, then there exist $ N $ $C^1$-functions $t\mapsto x_1(t), ..,t \mapsto x_N(t) $ uniquely determined such that
\begin{equation}
\sup_{t\in\R+} \|u(t,\cdot)-\sum_{j=1}^N \varphi_{c_j}(\cdot-x_j(t)) \|_{\H} \le
A\sqrt{\sqrt{\varepsilon}+L^{-{1/8}}}\;  \label{ini2}
\end{equation}
and 
\begin{equation}
\int_{\R} \Bigl( u(t,\cdot) -\sum_{j=1}^N \varphi_{c_j}(\cdot- x_j(t)) \Bigr)
 \partial_x \varphi_{c_i} (\cdot - x_j(t)) \, dx = 0 \; , \quad i\in\{1,..,N\}. \label{mod2}
\end{equation}
Moreover,   for $ i=1,..,N $ 
\begin{equation}\label{difdif}
|\dot{x}_i-c_i| \le A \sqrt{\sqrt{\varepsilon}+L^{-{1/8}}}, \quad \forall t\in\R_+  \; .
\end{equation}
 \end{theorem}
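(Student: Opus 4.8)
The plan is to follow the Martel--Merle--Tsai scheme for multi-solitons, adapted to peakons by coupling the Constantin--Strauss variational characterization of a single peakon with an almost-monotonicity principle for the localized energy. Throughout I take the two conservation laws of (C-H) recalled in \eqref{Ech}, namely $E(u)=\int_\R u^2+u_x^2$ and $F(u)=\int_\R u^3+u u_x^2$, so that for (C-H) the $\|\cdot\|_\H$-norm is the $H^1$-norm, with $E(\varphi_c)=2c^2$ and $F(\varphi_c)=\tfrac43 c^3$. The whole argument is organized as a continuity (bootstrap) argument: fixing $A$, I would introduce the maximal time $T^*$ up to which \eqref{ini2} holds with $A$ replaced by $A/2$, and show that on $[0,T^*[$ the estimates can be improved back to constant $A/2$, forcing $T^*=+\infty$.

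First I would set up the modulation. As long as $u(t)$ stays $A\sqrt{\sqrt\varepsilon+L^{-1/8}}$-close in $H^1$ to some train $\sum_j\varphi_{c_j}(\cdot-y_j)$ with well-ordered, well-separated centers, the implicit function theorem applied to the $N$ functionals $y\mapsto\int_\R\bigl(u-\sum_j\varphi_{c_j}(\cdot-y_j)\bigr)\partial_x\varphi_{c_i}(\cdot-y_i)$ produces unique $C^1$ parameters $x_1(t)<\dots<x_N(t)$ satisfying the orthogonality conditions \eqref{mod2}. Differentiating \eqref{mod2} in time and substituting the equation yields a linear system for the vector $(\dot x_i-c_i)_i$ whose matrix is diagonally dominant (the off-diagonal entries are $O(e^{-L/8})$ thanks to the separation) and whose right-hand side is controlled by the current distance to the train; this produces \eqref{difdif} on $[0,T^*[$ and, in particular, keeps $x_i(t)-x_{i-1}(t)$ large and nondecreasing.

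The heart of the proof, and the step I expect to be the main obstacle, is the almost-monotonicity of the energy localized to the right of the moving interface points $m_i(t)=\tfrac12\bigl(x_{i-1}(t)+x_i(t)\bigr)$. Introducing a smooth cut-off $\Psi$ supported to the right of $m_i$ with a transition layer whose width is tuned to balance the errors, and using that for (C-H) the characteristics move with speed $u$ so that the density $u^2+u_x^2$ is transported to the right, I would prove that $t\mapsto\int_\R(u^2+u_x^2)(t)\,\Psi(\cdot-m_i(t))\,dx$ is nonincreasing up to an error $O(e^{-\sigma L})$ for some $\sigma>0$, and likewise for a localization of the cubic functional $F$. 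These two facts decouple the $N$ windows and show that the energy $E_i(t)$ and the functional $F_i(t)$ captured around the $i$-th peak are each almost conserved. Since $F$ is cubic and is \emph{not} a localized conservation law, the delicate point is to bound the flux terms coming from commuting $(1-\partial_x^2)^{-1}$ with the cut-offs and to show they are exponentially small in the separation $L$.

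Finally I would localize the Constantin--Strauss argument in each window. For any $w\in H^1$ with $M_w=\max_\R w$ one has $F(w)\le M_w E(w)-\tfrac23 M_w^3$, with equality exactly at a peakon; equivalently the polynomial $P(M)=\tfrac23 M^3-M\,E(w)+F(w)$ is nonpositive at $M=M_w$. Because $M=c$ is a \emph{double} root of $P$ when $(E,F)=(E(\varphi_c),F(\varphi_c))$, the almost conservation of $E_i$ and $F_i$ forces the local maximum of $u$ to satisfy $|M_i-c_i|\lesssim(\sqrt\varepsilon+L^{-1/8})^{1/2}$, the square-root loss being the origin of the outer root in \eqref{ini2}; feeding this back into the identity $\|u-\varphi_{c_i}(\cdot-x_i)\|_{H^1}^2=E_i-E(\varphi_{c_i})+4c_i(c_i-M_i)+O(e^{-\sigma L})$ controls the distance on each window. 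Summing over $i$, adding the exponentially small cross terms, and using the smallness hypothesis \eqref{ini} then improves the global distance back to $\tfrac{A}{2}\sqrt{\sqrt\varepsilon+L^{-1/8}}$, which closes the bootstrap; the orthogonality \eqref{mod2} and the speed estimate \eqref{difdif} hold on all of $\R_+$ by the modulation step, completing the proof.
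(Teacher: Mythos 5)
Before anything else: the paper does not prove this statement at all --- Theorem \ref{mult-peaks} is imported verbatim from \cite{Andre1}, so there is no internal proof to compare you against; the benchmark is Kabakouala's argument. And that points to the genuine gap in your proposal. Despite the misprint ``(C-H)'' in the statement, this is the Degasperis--Procesi result: the citation is to \cite{Andre1}, whose subject is DP, Section \ref{6} sits inside the DP part of the paper, and the norm $\H$ is the one defined in \eqref{Edp}, namely $\H(u)=\int_\R 5v^2+4v_x^2+v_{xx}^2$ with $v=(4-\partial_x^2)^{-1}u$, which by \eqref{equi} is equivalent to the $L^2$ norm --- not to $H^1$. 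Your whole proof is instantiated on the Camassa--Holm structure: you take the CH invariants $E,F$ of \eqref{Ech}, declare $\|\cdot\|_{\H}$ to be the $H^1$ norm, and at the decisive variational step invoke the Constantin--Strauss inequality $F(w)\le M_w E(w)-\tfrac23 M_w^3$ together with the expansion $\|u-\varphi_{c_i}(\cdot-x_i)\|_{H^1}^2=E_i-E(\varphi_{c_i})+4c_i(c_i-M_i)+O(e^{-\sigma L})$. These identities are specific to CH; what they sketch is the CH multipeakon theorem of \cite{EL2}, not the statement at hand. For DP the conserved quantities are $E(u)=\H(u)$ and $F(u)=\int_\R u^3$, and the single-peakon variational lemma is the Lin--Liu one \cite{LL} (simplified in \cite{Andre2}), which is structurally different and works only under the sign condition $y=u-u_{xx}\in\M_+$ --- i.e.\ the hypothesis $u_0\in Y_+$ in the statement, which your sketch never uses. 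A second symptom of the same mismatch: your monotonicity functional $\int(u^2+u_x^2)\Psi(\cdot-m_i(t))$ and the heuristic that ``the density $u^2+u_x^2$ is transported by the characteristics'' have no DP counterpart; the correct localized quantity is $\int(5v^2+4v_x^2+v_{xx}^2)\Psi+\gamma\langle y,\Psi\rangle$ as in Lemma \ref{almostdecay}, and its flux (Lemma \ref{Lemma 3.2}) contains the nonlocal term $h=(1-\partial_x^2)^{-1}u^2$, whose pointwise control (Lemma \ref{Lemma 3.3}, $3v\le u\le 6v$, $|h_x|\le h$) again relies on $y\ge 0$.

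What is right, and salvageable, is the architecture: modulation via the implicit function theorem giving \eqref{mod2} and an almost-diagonal ODE system giving \eqref{difdif}; almost monotonicity across moving interfaces $m_i(t)$ to decouple the windows; a local two-invariant variational argument whose double root at $M=c_i$ produces exactly the fourth-root loss in \eqref{ini2}; and a bootstrap to globalize. This is indeed the Martel--Merle--Tsai scheme that \cite{Andre1} follows. To convert your sketch into a proof of the stated theorem you would need to (i) replace $(E,F)$ by $(\H,\int_\R u^3)$ and work in the $L^2$-equivalent $\H$-metric, (ii) replace the Constantin--Strauss step by the Lin--Liu local variational lemma, using $u_0\in Y_+$ both there and for global existence, positivity of $u$ and $|u_x|\le u$ (see \eqref{dodo}), and (iii) run the monotonicity on the DP functional as in Lemma \ref{almostdecay}, where the exponentially small errors require the interface speeds to stay strictly between consecutive peakon speeds, which is where your \eqref{difdif} step feeds back in. As written, however, the proposal proves a different theorem from the one stated.
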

 This result combined with the asymptotic stability of a single peakon established in the preceding section, yields  the asymptotic stability of a train of well ordered peakons by following the general strategy developped in \cite{MMT} (see also \cite{EM}  ). We do not give the proof but refer the reader to \cite{L} for a detailed proof in the case of the Camassa-Holm equation.
 \begin{theorem}\label{asympt-mult-peaks}
Let be given $ N $ velocities $c_1,.., c_N $ such that $0<c_1<c_2<..<c_N $ and $ 0<\theta_0<c_1/4 $.
There exist   $ L_0>0 $
 and $ \varepsilon_0>0 $ such that if  $ u\in C(\R;H^1) $ is
   the solution of (C-H) emanating from $ u_0\in Y_+ $, with 
 \begin{equation}
 \|u_0-\sum_{j=1}^N \varphi_{c_j}(\cdot-z_j^0) \|_{\H} \le \varepsilon_0^2 
\quad  \mbox{ and } \quad  z_j^0-z_{j-1}^0\ge L_0,\label{inini}
 \end{equation}
then there exist $0< c_1^*<..<c_N^*  $ and $ C^1$-functions $t\mapsto x_1(t), ..,t\mapsto x_N(t) $,  with  $ \dot{x}_j(t) \to c_j^* $ as $ t\to +\infty $, such that,

\begin{equation}\label{mul1}
u(\cdot+x_j(t)) \weaktendsto{t\to +\infty} \varphi_{c_j^*}  \mbox{ in } H^1(\R), \; \forall j\in \{1,..,N\} \; .
\end{equation}

 Moreover, for any $ z\in \R $, 
\begin{equation}\label{mul2}
u-\sum_{j=1}^N \varphi_{c_j^*}(\cdot-x_j(t)) \tendsto{t\to +\infty} 0 \mbox{ in } H^1(
]-\infty,z[ \cup ]\theta_0 t,+\infty[)\; .
\end{equation}
 \end{theorem}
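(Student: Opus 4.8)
The plan is to combine the orbital stability of the train given by Theorem \ref{mult-peaks} with the asymptotic stability of a single peakon established in the preceding section (Theorem \ref{asympstab}), following the general decoupling strategy of Martel, Merle and Tsai \cite{MMT}, exactly as was done for the Camassa-Holm equation in \cite{L}. First I would invoke Theorem \ref{mult-peaks} to obtain, for $\varepsilon_0$ small and $L_0$ large, the $N$ modulation functions $x_1(t),\dots,x_N(t)$ together with the orbital bound \eqref{ini2}, the orthogonality conditions \eqref{mod2} and the speed estimates \eqref{difdif}. Since $|\dot x_j(t)-c_j|\le A\sqrt{\sqrt\varepsilon+L^{-1/8}}$ and $c_1<\cdots<c_N$, shrinking the parameters forces $\dot x_{j+1}(t)-\dot x_j(t)\ge \tfrac12(c_{j+1}-c_j)>0$, so the bumps separate: $x_{j+1}(t)-x_j(t)\ge L_0/2+\delta t$ for some $\delta>0$ and all $t\ge 0$. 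In particular, on any fixed window centred at $x_j(t)$ the contributions of the profiles $\varphi_{c_k}(\cdot-x_k(t))$ with $k\ne j$ are exponentially small for large $t$, so locally the solution looks like a single-peakon perturbation.

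The heart of the argument is to decouple the $N$ bumps by means of localized almost-monotonicity functionals of the type $J^R_{\gamma,r}$, $J^R_{\gamma,l}$ introduced in \eqref{defJr}--\eqref{defJl}. For each gap I would insert a weight $\Psi(\cdot-z_j(t))$ travelling at an intermediate speed $c_j<\dot z_j(t)<c_{j+1}$, so that the transition layer of $\Psi$ stays inside the widening gap between the $j$-th and $(j+1)$-th peakons, where the orbital bound \eqref{ini2} and the separation make $u(t,\cdot)$ uniformly small. In that regime the monotonicity computation underlying Lemma \ref{almostdecay} applies verbatim, yielding that the quantity $\H+\gamma M$ lying to the right of $z_j(t)$ is almost non-increasing as $t$ decreases and almost non-decreasing as $t$ increases, up to an error $K_0e^{-R/6}$. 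Telescoping these $N-1$ localized monotonicities partitions the total energy into $N$ channels and shows that, asymptotically, the energy attached to each bump is trapped around $x_j(t)$ and cannot leak into a neighbouring channel; this is the DP analogue of the mass/energy partition of \cite{MMT,L}.

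With the decoupling in hand I would run the single-peakon scheme of Sections \ref{51}--\ref{54} around each $x_j(t)$. Given $t_n\nearrow+\infty$, the orbital bound furnishes a weak limit $u(t_n,\cdot+x_j(t_n))\rightharpoonup \tilde u_{0,j}$ in $H^1(\R)$ with strong $H^1_{loc}$ convergence, and the localized monotonicity together with the $Y_+$-bound shows, by the same argument as Proposition \ref{propasym}, that the DP solution emanating from $\tilde u_{0,j}$ is $Y$-almost localized. The rigidity Theorem \ref{rigidity} with $b=3$ then forces $\tilde u_{0,j}=c_j^*\varphi(\cdot-x_{0,j})$, while passing the orthogonality \eqref{mod2} to the limit gives $x_{0,j}=0$; uniqueness of the limiting profile upgrades the extraction to the full weak convergence \eqref{mul1}. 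The ordering $0<c_1^*<\cdots<c_N^*$ follows from \eqref{difdif} and the separation, and $\dot x_j(t)\to c_j^*$ is obtained by the modulation computation of Section \ref{53}. Finally, the strong convergence \eqref{mul2} is produced by combining \eqref{mul1} with the localized monotonicities \eqref{monoJr}--\eqref{monoJl}: the region $]\theta_0 t,+\infty[$ lies to the right of all peakons (since $\theta_0<c_1/4<\dot x_1$) and is handled as in Section \ref{54}, while the region $]-\infty,z[$ lies behind the whole train and is controlled by the left-travelling energy argument of Lemma \ref{apen}.

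The main obstacle is the energy-partition step: making rigorous that each of the $N$ bumps carries a well-defined amount of energy that is conserved in the limit and that no energy is exchanged between adjacent channels as $t\to+\infty$. This is delicate because $J^R_{\gamma,r}$ and $J^R_{\gamma,l}$ are only \emph{almost} monotone, so one must tune the gap speeds $z_j(t)$ and the parameter $R$ so that the accumulated errors $K_0e^{-R/6}$ remain negligible uniformly in $j$ and $t$, and so that the separation rate $\delta$ dominates the spreading of each $\Psi$-layer. Once this partition is secured, the remaining steps are routine adaptations of the single-peakon analysis already carried out for the DP equation, and I would simply refer to \cite{L} for the parallel Camassa-Holm computations.
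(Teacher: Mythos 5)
Your proposal follows exactly the strategy the paper itself adopts: the paper gives no detailed proof of this theorem, stating only that it follows by combining the orbital stability of well-ordered trains (Theorem \ref{mult-peaks}) with the single-peakon asymptotic stability of Section \ref{sect4} via the Martel--Merle--Tsai decoupling scheme, and referring to \cite{L} for the detailed parallel computations in the Camassa--Holm case. Your sketch --- speed separation from \eqref{difdif}, localized almost-monotonicity functionals travelling in the gaps, a Proposition \ref{propasym}-type compactness and rigidity (Theorem \ref{rigidity}) argument around each bump, and Lemma \ref{apen} for the region behind the train --- is a faithful expansion of precisely that outline.
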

 
    \section{Appendix}
    \subsection{Proof of the Lemma \ref{Lemma 3.2}}

First we notice that applying the operator $(4-\partial_x^2)^{-1} $ to the two members of \eqref{DP} and using that 
\begin{equation}\label{zq}
(4-\partial_x^2)^{-1} (1-\partial_x^2)^{-1}= \frac{1}{3} (1-\partial_x^2)^{-1}-  \frac{1}{3} (4-\partial_x^2)^{-1}\; , 
\end{equation}
we infer that $ v$ satisfies
\begin{equation}\label{vt}
v_t=-\frac{1}{2} h_x \; .
\end{equation}
With this identity in hand it is easy to check that 
\begin{align*}
4 \frac{d}{dt} \int_{\R} v^2 g  &= 8 \int_{\R} v v_t g = -4  \int_{\R} v h_x g \; , 
\end{align*}
and 
\begin{align*}
 5 \frac{d}{dt} \int_{\R} v_x^2 g & = 10 \int_{\R} v_x v_{xt} g 
  = -5 \int_{\R} v_x (1-\partial_x^2)^{-1} \partial_x^2 u^2 g  = 5  \int_{\R} v_x  u^2 g -5 \int_{\R} v_x h g \\
 & = 5  \int_{\R} v_x  u^2 g +5 \int_{\R} v h_x g +5 \int_{\R} v h g'\; 
\end{align*}
In the same way we get 
\begin{align*}
 \frac{d}{dt} \int_{\R} v_{xx}^2 g  &= 2 \int_{\R} v_{xx} v_{xxt} g = -  \int_{\R} 
 v_{xx}  \partial_x (1-\partial_x^2)^{-1} \partial_x^2 u^2 g \\
 & =  \int_{\R} v_{xx} \partial_x (u^2) g-  \int_{\R} 
 v_{xx}   (1-\partial_x^2)^{-1} \partial_x (u^2) g\\
 &= A+B 
\end{align*}
where it holds 
\begin{align*}
 A & =  \int_{\R} (4-\partial_x^2)^{-1} \partial_x^2 u \, \partial_x (u^2) g 
 = - \int_{\R} u  \partial_x (u^2) g+4 \int_{\R} v  \partial_x (u^2) g \\
 & = \frac{2}{3}  \int_{\R} u^3 g' -4   \int_{\R} v_x  u^2 g -4  \int_{\R} v  u^2 g'
 \end{align*}
and 
\begin{align*}
 B& =  \int_{\R} v_x (1-\partial_x^2)^{-1} \partial_x^2( u^2) g
 + \int_{\R} v_x h_x  g' \\
 & = -\int_{\R} v_x  u^2 g  + \int_{\R} v_x h  g + \int_{\R} v_x h_x  g'  \; .
 \end{align*}
 Gathering the above identities, \eqref{4.29} follows.
 \subsection{Proof of Lemma \ref{Lemma 3.3}}
  According to   \eqref{zq} it holds 
  \begin{equation}\label{zqq}
v=(4-\partial_x^2)^{-1} (1-\partial_x^2)^{-1} y =  \frac{1}{3} u - \frac{1}{3} (4-\partial_x^2)^{-1}y
\end{equation}
which proves that $ v\le \frac{1}{3} u $ since $ y\ge 0 $. On the other hand, \eqref{zq} also leads to 
\begin{align*}
6v-u &=  (1-\partial_x^2)^{-1}y - 2 (4-\partial_x^2)^{-1}y \\
&  = \frac{1}{2} e^{-|\cdot|} \ast y - \frac{1}{2} e^{-2|\cdot|} \ast y \\
&= \frac{1}{2} (e^{-|\cdot|}- e^{-2|\cdot|})\ast y \ge 0 
\end{align*}
which proves that $ u \le 6 v $. Now, the identities
$$
v(x)=\frac{e^{-2x}}{4}  \int_{-\infty}^x e^{2 x'} u(x') dx' +\frac{e^{2x}}{4} \int_x^{+\infty} e^{-2x'} u(x') dx'
$$
and 
$$
v_x(x)=-\frac{e^{-2x}}{2}  \int_{-\infty}^x e^{2 x'} u(x') dx' +\frac{e^{2x}}{2} \int_x^{+\infty} e^{-2x'} u(x') dx'\; ,
$$
ensure that $ |v_x| \le 2 v $. Finally, combining the previous estimates  with $ v_{xx}=4v-u $, we eventually get that 
 $ |v_{xx}| \le \frac{4}{3} u $. 
\subsection{Proof of Lemma \ref{almostdecay}}
Let us first notice that  $ \Psi(-\cdot)=1-\Psi $ on $ \R $, $ \Psi' $ is a  positive even  function and that 
 there exists $C>0 $ such that $ \forall x\le 0 $, 
\begin{equation}\label{psipsi}
|\Psi(x)| + |\Psi'(x)|\le C \exp(x/6) \; .
\end{equation}
Moreover, by direct calculations, it is easy to check that 
\begin{equation}\label{psi3}
|\Psi^{'''}| \le  \frac{1}{2} \Psi' \;\text{on }\R 
\end{equation}
and that 
\begin{equation} \label{po}
\Psi'(x)\ge \Psi'(2)= \frac{1}{3\pi} \frac{e^{1/3}}{1+e^{2/3}} , \quad \forall x\in [0,2] \; .
\end{equation}

We first approximate $u(t_0) $ by the sequence of smooth functions $ u_{0,n}=\rho_n\ast u(t_0) $, with $ \{\rho_n\} $ defined in \eqref{rho}, that belongs to $H^\infty(\R) \cap Y_+ $
 and converges to $ u(t_0) $ in $ Y$. According to Propositions \ref{smoothWP} and \ref{WP}, the  sequence of solutions $ \{u_n\} $ to \eqref{DP}  with $ u_n(t_0)=u_{0,n} $  belongs to $ C(\R;H^\infty(\R)) $ and 
 for any fixed $ T>0 $ it holds 
\begin{eqnarray}
u_n & \to & u \mbox{ in } C([t_0-T,t_0+T];H^1) \label{cvH1}\\
v_n & \to & v \text{ in }   C([t_0-T,t_0+T];H^3) \label{cvH3}\\
y_n & \rightharpoonup \! \ast & y  \mbox{ in } C_{ti} (]t_0-T,t_0+T[;{\mathcal M}) \label{weakcv1}
\end{eqnarray}
where $ v_n=(4-\partial_x^2)^{-1} u_n $ and $ y_n=u_n-\partial^2_x u_n $. In particular, for any fixed $ T>0 $, there exists $n_0=n_0(T)\ge 0 $ such that for any $ n\ge n_0 $, 
$$
\|u-u_n\|_{L^\infty(]t_0-T,t_0+T[\times \R)} <  \frac{(1-\alpha) c_0}{2^6} \; ,
$$
which together with   \eqref{loc} force 
\begin{equation}\label{dif}
\sup_{t\in ]t_0-T,t_0+T[}  \|u_n(t)\|_{L^\infty(|x-x(t)|>R_0)} <  \frac{(1-\alpha) c_0}{2^5} \; .
\end{equation}
At this stage it is worth noticing that \eqref{estuv} then ensure that it also holds
\begin{equation}\label{dif2}
\sup_{t\in ]t_0-T,t_0+T[}  \|v_n(t)+|v_x(t)|\|_{L^\infty(|x-x(t)|>R_0)} <  \frac{(1-\alpha) c_0}{2^5} \; .
\end{equation}
We first prove that    \eqref{mono}  holds on $ [t_0-T,t_0] $ with  $ u $ replaced by $ u_n $ for $ n\ge n_0 $. 
The following computations hold for $ u_n $ with $ n\ge n_0$ but , to simplify the notation, we drop the index $ n $.  
For any function $ g\in C^1(\R) $ it is not too hard to check that \eqref{by} with $ b=3 $ leads to 
 \begin{eqnarray}
 \frac{d}{dt}\int_{\R}  y g \, dx & = &-\int_{\R} \partial_x (y u) g -2 \int_{\R} y u_x g \nonumber \\
 & = & \int_{\R} y u g' -2\int_{\R} (u-u_{xx}) u_x g \nonumber \\
  &= &  \int_{\R} y u g' + \int_{\R} (u^2-u_x^2) g'  \label{go2}
 \end{eqnarray}
  Applying \eqref{4.29} and \eqref{go2} with $ g(t,x)=\Psi(x - z^{R}_{t_0}(t)) $ we get
  \begin{eqnarray}
 \frac{d}{dt}I^{+R}_{t_0}(t) & = &-\dot{z}(t) \int_{\R} \Psi' \Bigl[
 4v^{2}+5v^{2}_{x}+v^{2}_{xx}+\gamma y \Bigr] +\gamma \int_{\R} (u^2-u_x^2) \Psi'\nonumber \\
 &  &+ \int_{\mathbb{R}} u \, (\frac{2}{3} u^2 -4u v +\gamma y  )  \Psi' +5 \int_{\R} vh  \Psi' +\int_{\R} v_x h_x  \Psi'
 \nonumber \\
  &\le &   - \dot{z}(t) \int_{\R} \Psi' \Bigl[ 
  4v^{2}+5v^{2}_{x}+v^{2}_{xx} +\gamma y \Bigr] +\gamma \int_{\R} u^2 \Psi'\nonumber \\
   &  &+ \int_{\mathbb{R}} u \, (\frac{2}{3} u^2 -4u v +\gamma y  )  \Psi' + \int_{\R} (5v+|v_x|) h  \Psi' 
 \nonumber \\
  &\le &   - \dot{z}(t) \int_{\R} \Psi' \Bigl[ 
  4v^{2}+5v^{2}_{x}+v^{2}_{xx} +\gamma y \Bigr] +\gamma \int_{\R} u^2 \Psi'+ J_1+J_2 \, \label{go3}
 \end{eqnarray}
 where from the first to the second step we used that $ \Psi'\ge 0 $ and that $ h=(1-\partial_x^2)^{-1} u^2 $ ensures that $ 
 |h_x| \le h $( see  the proof of \eqref{dodo}). \\
 We observe that 
\begin{equation}\label{za}
  \int_{\R} (u^2-u_x^2) \Psi' \le    \int_{\R} u^2  \Psi' = \int_{\R} (4v-v_{xx})^2  \Psi' 
  \le 2 \int_{\R} (16 v^2 +v_{xx}^2) \Psi'\; .
  \end{equation}
  so that, for  $ 0\le \gamma\le \frac{1}{8}(1-\alpha) c_0 $, it holds 
  $$
 -\dot{z}(t) \int_{\R} \Psi' \Bigl[
  4v^{2}+5v^{2}_{x}+v^{2}_{xx}+\gamma y \Bigr] +\gamma \int_{\R} (u^2-u_x^2) \Psi'
   \le  -\frac{\dot{z}(t)}{2} \int_{\R} \Psi' \Bigl[
 4v^{2}+5v^{2}_{x}+v^{2}_{xx}+\gamma y \Bigr] 
 $$
 To estimate $ J_{1} $ we divide $ \R $ into two regions relating to the size of $ |u| $ as follows
\begin{eqnarray}
J_{1}(t) &= & \int_{|x-x(t)|<R_0} u \, (\frac{2}{3} u^2 -4u v +\gamma y  )\Psi'
+ \int_{|x-x(t)|>R_0}  u \, (\frac{2}{3} u^2 -4u v +\gamma y  )\Psi'\nonumber \\
 & = & J_{11}+J_{12}\quad . \label{J0}
\end{eqnarray}
Observe that \eqref{condz} ensures that $ \dot{x}(t)-\dot{z}(t)\ge \beta c_0 $ for all $t\in \R $  and thus, for $ |x-x(t)|<R_0 $,
 \begin{equation}\label{to1}
  x-z_{t_0}^{R}(t)=x-x(t)-R+(x(t)-z(t))-(x(t_0)-z(t_0))\le  R_0-R-\beta c_0 (t_0-t) 
  \end{equation}
  and thus the decay properties of $ \Psi' $ lead  to
\begin{eqnarray}
J_{11} (t) &\lesssim & \Bigl[\|u(t)\|_{L^\infty} (\|u(t)\|_{L^2}^2+\|v\|_{L^2}^2+c_0\|y(t)\|_{L^1})\Bigr]  e^{R_0/6}  e^{-R/ 6}
e^{-\frac{\beta}{6} c_0(t_0-t)} \nonumber \\
 & \lesssim  &  \| u_0\|_{\H}(\|u_0\|_{\H}^2+c_0\|y_0\|_{L^1}) e^{R_0/6} e^{-R/6}
e^{-\frac{\beta}{6} c_0(t_0-t)} \quad . \label{J11}
\end{eqnarray}
where we used that $ \|v\|_{L^2}  \lesssim \|u\|_{\H}$ and that $ u-u_{xx}\ge 0 $ ensures that 
$$
 \|u\|_{L^\infty}^2 \le \|u\|_{H^1}^2 \le 2  \|u\|_{L^2}^2 \lesssim \|u\|_{\H}^2 \; .
 $$
On the other hand,  \eqref{dif} ,Young's inequality  and \eqref{za} lead  for all $ t\in [t_0-T,t_0] $ to
\begin{eqnarray}
J_{12} &\le & 4 \| u\|_{L^\infty(|x-x(t)|>R_0)} \int_{|x-x(t)|>R_0} (u^2+v^2 +\gamma y)\Psi'\nonumber \\
 & \le & \frac{ (1-\alpha) c_0 }{8}  \int_{|x-x(t)|>R_0} 
 \Bigr[  4v^{2}+5v^{2}_{x}+v^{2}_{xx} +\gamma y \Bigr]\Psi' \quad .\label{J12}
\end{eqnarray}
 It thus remains to estimate
 $ J_2(t) $.
For this,  we decompose again $
  \R $ into two regions relating to the size of $ \max(v, |v_x|) $.
   First proceeding as
    in \eqref{J11} we easily check that 
  \begin{eqnarray}
& & \int_{|x-x(t)|<R_0}(5v+|v_x|) \Psi'
(1-\partial_x^2)^{-1}(u^2) \nonumber \\
& & \le \frac{5}{2} \|v+|v_x|\|_{L^\infty} \sup_{|x-x(t)|<R_0}
|\Psi'(x- z^{R}_{t_0}(t))|\int_{\R} e^{-|x|} \ast u^2  \, dx \nonumber \\
 &  & \le C \|u_0\|_{\H}^3 \, e^{R_0/6} e^{-R/6}
 e^{-\frac{\beta}{6}c_0(t-t_0)} \label{J31}
\end{eqnarray}
since $  \|v+|v_x|\|_{L^\infty} \lesssim \|v\|_{H^2} \lesssim \|u\|_{\H} $ and 
\begin{equation}
 \forall f\in L^1(\R), \quad (1-\partial_x^2)^{-1} f =\frac{1}{2} e^{-|x|} \ast f \quad .
 \label{tytu}
 \end{equation}
Now in the region $ |x-x(t)|>R_0$, noticing that $ \Psi' $ and
$ u^2 $ are non-negative, we  get
 \begin{eqnarray}
 & &  \int_{|x-x(t)|>R_0}(5v+|v_x|)\Psi'
(1-\partial_x^2)^{-1}(u^2) \nonumber \\
 &  \le & 5 
\|v(t)+|v_x(t)|\|_{L^\infty(|x-x(t)|>R_0)}\int_{|x-x(t)|>R_0}\Psi'(
(1-\partial_x^2)^{-1}(u^2) \nonumber \\
&  \le &  5  \| v(t)+|v_x(t)|\|_{L^\infty(|x-x(t)|>R_0)}  \int_{\R} (u^2) (1-\partial_x^2)^{-1}
\Psi'
\end{eqnarray}
On the other hand, from
  \eqref{psi3} and \eqref{tytu}   we  infer  that 
  $$
(1-\partial_x^2) \Psi' \ge \frac{1}{2} \Psi' \Rightarrow
(1-\partial_x^2)^{-1} \Psi'\le 2  \Psi' \; .
  $$
Therefore, on account of \eqref{dif2} and \eqref{za},
\begin{eqnarray}
 & &  \int_{|x-x(t)|>R_0} (5v+|v_x|)  \Psi'
(1-\partial_x^2)^{-1}(u^2) \nonumber \\
&  &  \le 10 \| v(t)+|v_x(t)|\|_{L^\infty(|x-x(t)|>R_0)}  \int_{\R} u^2
 \Psi' \nonumber \\
 &  &  \le   \frac{ (1-\alpha)  c_0}{8}
\int_{\R} ( 4v^{2}+5v^{2}_{x}+v^{2}_{xx})
 \Psi' \label{J32}
\end{eqnarray}
Gathering \eqref{J0}, \eqref{J11}, \eqref{J12}, \eqref{J31} and
\eqref{J32} we conclude that there exists  $C $  only
depending on  $R_0 $, M(u)  and $ \H(u) $ 
  such
that for  $ R \ge R_0 $ and $ t\in [-T+t_0,t_0] $ it holds 
\begin{equation}
\frac{d}{dt} I^{+R}_{t_0}(t) \le  C  e^{-R/6} e^{-\frac{\beta}{6}(t_0-t)} \; .
\label{nini}
\end{equation}
Integrating between $ t$ and $ t_0$  we obtain \eqref{mono} for  any $  t \in [ t_0-T,t_0] $ and $ u $ replaced by $ u_n $ with $ n\ge n_0$. Note that the constant appearing in front of the exponential now also depends on $ \beta$. 
 The convergence results \eqref{cvH1}-\eqref{weakcv1} then ensure that \eqref{mono} holds also for  $ u $ and   $t\in[ t_0-T,t_0] $ and  the result for $ t\le t_0 $ follows since $ T>0 $ is arbitrary. Finally, \eqref{mono2} can be proven in exactly the same  way by noticing that  for  $|x-x(t)|<R_0 $ it holds 
  \begin{equation} \label{to2}
   x-z_{t_0}^{-R}(t)=x-x(t)+R+(x(t)-z(t))- (x(t_0)-z(t_0))\ge  -R_0+R+\beta c_0 (t-t_0) \; .
  \end{equation}
\hfill $\square$ \\
\noindent
\subsection{Proof of Proposition \ref{prodecay}}
First, as explained in Remark \ref{remark1}, the    $Y$-almost localization of $ u $ implies that $ u $ is $ H^1$-almost localized and since $v=(4-\partial_x^2)^{-1} u$ the same type arguments show that $ v$ is $ H^3$-almost localized.  Therefore, it is clear that $ u $ satisfies the hypotheses of Lemma \ref{almostdecay} for $\alpha=1/3$ and $ R_0 >0 $ big enough. 
We  fix $\alpha=1/3 $ and take $ \beta=1/3 $, $ \gamma=\frac{c_0}{12}$ and $ z(\cdot)=\frac{2}{3} x(\cdot)$ which clearly satisfy \eqref{condz}. Let us   show that $ I^{+R}_{t_0}(t) \tendsto{t\to-\infty} 0 $ which
together with \eqref{mono}
 will clearly lead to
 \begin{equation}
I^{+R}_{t_0}(t_0) \le  C e^{-R/6} \label{gigi}\quad .
 \end{equation}
For $ R_\varepsilon>0 $ to be specified later we decompose
  $ I^{+R}_{t_0} $ into
  \begin{eqnarray*}
I^{+R}_{t_0}(t)& = &\dist{4v^{2}(t)+5v^{2}_{x}(t)+v^{2}_{xx}(t)+\frac{c_0}{12}  y(t)}{\Psi(\cdot-z^{R}_{t_0}(t))\Bigl(1-\phi(\frac{\cdot-x(t)}{R_\varepsilon})\Bigr)}\\
& & +\dist{4v^{2}(t)+5v^{2}_{x}(t)+v^{2}_{xx}(t)+\frac{c_0}{12} y(t)}{\Psi(\cdot-z^R_{t_0}(t))
\phi(\frac{\cdot-x(t)}{R_\varepsilon})}
\\
 &= & I_1(t)+I_2(t) \quad .
  \end{eqnarray*}
  where $ \phi\in C^\infty(\R) $ is supported in $[-1,1] $ with $ 0\le \phi\le 1 $ on $ [-1,1] $ and $ \phi\equiv 1 $ on $[-1/2,1/2]$. 
From the $Y$-almost localization of $ u $ and the $ H^2(\R) $-almost localization of $ v$, for any $ \varepsilon>0
$ there exists $ R_\varepsilon>0 $
 such that $ I_1(t) \le \varepsilon/2 $. On the other hand, we observe that
 $$
 I_2(t) \le  (\|u_0\|_{\H}^2+c_0\|y_0\|_{\mathcal{M}}) \Psi\Bigl(R_\varepsilon-R-\frac{1}{3}(x(t_0)-x(t))\Bigr) \quad.
 $$
  But
  $ \dot{x}>c_0>0 $  obviously  imply that, for $ |x-x(t)|\le R_\varepsilon$, 
  $$x-z^{+R}_{t_0}(t)=x-x(t)-R -\frac{1}{3} (x(t_0)-x(t))  \le R_\varepsilon-R-\frac{1}{3} c_0 (t_0-t) \tendsto{t\to -\infty} -\infty
  $$
  which
  proves our claim since $\displaystyle\lim_{x\to -\infty} \Psi(x) =0 $. \\
 It follows from  \eqref{gigi} that for all $ t\in\R $, all 
  $x_0>0 $  and all $ \Phi\in C(\R) $ with $0\le \Phi\le 1 $ and $ \supp \Phi \subset [x_0, +\infty[ $.
$$
 \int_{\R} (4v^{2}(t)+5v^{2}_{x}(t)+v^{2}_{xx}(t))  \Phi(\cdot-x(t)) \, dx + \frac{c_0}{12} \dist{ \Phi(\cdot-x(t))}{y(t)} \le C \, \exp( -x_0/6) \; .
 $$
  The invariance of (C-H) under the transformation $ (t,x) \mapsto (-t,-x) $ yields the result for $  \supp \Phi \subset ]-\infty,-x_0] $.  Finally, the identity  $ u=(4-\partial_x^2) v $ together with \eqref{dodo}   ensure that 
  $$
   \int_{\R} (u^2(t)+u_x^2(t))  \Phi(\cdot-x(t)) \, dx  \le C' \, \exp( -x_0/6) \; .
  $$
  and the Sobolev embedding $ H^1(\R)  \hookrightarrow L^\infty(\R) $ enables to conclude that $ u $ is uniformly exponentially decaying.
    
 \noindent
  {\bf Acknowledgements} 
  The author thank ....
  \vspace{3mm}\\
\noindent
 {\bf Conflict of Interest }: The author declares that he has no conflict of interest.

\end{document}